\theoremstyle{plain}
\newtheorem{theorem}{Theorem}[section]
\newtheorem{lemma}[theorem]{Lemma}
\newtheorem{proposition}[theorem]{Proposition}
\newtheorem{corollary}[theorem]{Corollary}
\def\bc{\mathbb{C}}
\def\br{\mathbb{R}}
\def\fG{\mathfrak{G}}
\def\fH{\mathfrak{H}}
\def\fK{\mathfrak{K}}
\def\fM{\mathfrak{M}}
\def\fP{\mathfrak{P}}
\def\fZ{\mathfrak{Z}}
\def\fs{\mathfrak{s}}
\def\fo{\mathfrak{o}}
\newcounter{commentlabel}
\begin{document}
  \title{Growth estimates for orbits of self adjoint groups}
\author{Patrick Eberlein}
\date{\today}
\address{Department of Mathematics, University of North Carolina, Chapel Hill, NC 27599}
\email{pbe@email.unc.edu}
\thanks{AMS Subject Classifications : 53C99 ; 37A35 ; 22E99 ; 22F99 ; 20G99\\
Keywords : Self adjoint group actions ; closed orbits , asymptotic behavior of g $\rightarrow |g(v)|$}
\maketitle

$\mathit{Abstract}$

	Let G denote a closed, connected, self adjoint, noncompact subgroup of $GL(n,\br)$, and let $d_{R}$ and $d_{L}$ denote respectively the right and left invariant Riemannian metrics defined by the canonical inner product on $M(n,\br) = T_{I} GL(n,\br)$.  Let v be a nonzero vector of $\br^{n}$ such that the orbit G(v) is unbounded in $\br^{n}$.  Then the function $g \rightarrow d_{R}(g, G_{v})$ is unbounded, where $G_{v} = \{g(v) = v \}$, and we obtain algebraically defined upper and lower bounds $\lambda^{+}(v)$ and $\lambda^{-}(v)$ for the asymptotic behavior of the function $\frac{log|g(v)|}{d_{R}(g, G_{v})}$ as $d_{R}(g, G_{v}) \rightarrow \infty$.  The upper bound $\lambda^{+}(v)$ is at most 1.  The orbit G(v) is closed in $\br^{n} \Leftrightarrow \lambda^{-}(w)$ is positive for some w $\in$ G(v).  If $G_{v}$ is compact, then $g \rightarrow |d_{R}(g,I) - d_{L}(g,I)|$ is uniformly bounded in G, and the exponents $\lambda^{+}(v)$ and $\lambda^{-}(v)$ are sharp upper and lower asymptotic bounds for the functions $\frac{log|g(v)|}{d_{R}(g,I)}$ and  $\frac{log|g(v)|}{d_{L}(g,I)}$ as $d_{R}(g,I) \rightarrow \infty$ or as $d_{L}(g,I) \rightarrow \infty$.  However, we show by example that if $G_{v}$ is noncompact, then there need not exist asymptotic upper and lower bounds for the function  $\frac{log|g(v)|}{d_{L}(g, G_{v})}$ as $d_{L}(g, G_{v}) \rightarrow \infty$.  The results apply to  representations of noncompact semisimple Lie groups G on finite dimensional real vector spaces.

\section{Basic objects and notation}

$\mathit{Self~adjoint~subgroups~of~ GL(n,\br)}$

	Let $M(n,\br)$ denote the n x n real matrices, and let $GL(n,\br)$ denote the group of invertible matrices in $M(n,\br)$. Let G denote a closed, connected subgroup of $GL(n,\br)$.  The Lie algebra $\fG$ of G in $M(n,\br)$ is given by $\fG = \{X \in M(n,\br) : exp(tX) \in G~\rm{for~all~t \in \br}~ \}$, where exp : $M(n,\br) \rightarrow GL(n,\br)$ denotes the matrix exponential map. It is known that every closed subgroup of $GL(n,\br)$ is a Lie group  with the subspace topology.  Let $O(n, \br) = \{g \in GL(n, \br) : gg^{t} = g^{t}g = I \}$, where I is the identity matrix, and let $\fs \fo(n, \br) =\{X \in M(n, \br) : X^{t} = - X \}$, the Lie algebra of $O(n, \br)$.
	
	A subgroup G of $GL(n,\br)$ is said to be $\mathit{self~adjoint}$ if $g^{t}\in$ G whenever $g \in$ G. .  In this paper G will typically denote a connected, closed, self adjoint, noncompact subgroup of $GL(n,\br)$ unless stated otherwise.  We define $K = G~ \cap~ O(n,\br)$.  The corresponding Lie algebra is $\fK = \fG~ \cap~ \fs \fo (n,\br)$.  Let $\fP = \{X \in \fG : X^{t} = X \}$.  If G is self adjoint, then $\fG = \fK \oplus \fP$.
\newline

$\mathit{Remark}$  Let G be a connected, noncompact, semisimple Lie group, and let V be a finite dimensional real vector space.  Let $\rho : G \rightarrow GL(V)$ be a $C^{\infty}$ homomorphism.  Then there exists an inner product $\langle ,\rangle$ on V such that $\rho(G)$ is self adjoint on V relative to $\langle , \rangle$.  Hence the results of this paper can be applied to $\rho(G)$.  See (10.3) for details.
\vspace{.3in}

$\mathit{Left~and~right~invariant~Riemannian~metrics}$
	
	Let $\langle , \rangle$ denote the canonical inner product on $M(n,\br) \approx T_{I}GL(n,\br)$ given by $\langle A , B \rangle = trace~AB^{t}$.  For a closed, connected subgroup G of $GL(n,\br)$ the inner product $\langle , \rangle$ defines an inner product on $T_{I}G$ and a right invariant Riemannian structure, also denoted $\langle , \rangle$, on G.  Let $d_{R}$ denote the corresponding right invariant Riemannian metric on G.  Similarly, the canonical inner product $\langle , \rangle$ on $T_{I}G$ defines a left invariant Riemannian metric $d_{L}$ on G.  We call $d_{R}$ and $d_{L}$ the $\mathit{canonical}$ right and left invariant Riemannian metrics on G.
\newline

$\mathit{Stabilizer~subgroups~and~associated~objects}$
	
	For each nonzero vector v of $\br^{n}$ let $G_{v} = \{g \in G : g(v) = v \}$, and let $\fG_{v} = \{X \in \fG : X(v) = 0 \}$ denote the Lie algebra of $G_{v}$.  Let $\fK_{v} = \fs \fo (n,\br) \cap \fG_{v} = \{X \in \fG_{v} : X^{t} = - X \}$, and let $\fP_{v} = \{X \in \fG_{v} : X^{t} = X \}$.  Let $\widetilde{\fP_{v}} = (\fK + \fG_{v})^{\perp} = \{\zeta \in \fG : \langle \zeta , \eta \rangle = 0~\rm{for~all}~\eta \in \fK + \fG_{v} \}$.  Note that $\widetilde{\fP_{v}}$ is orthogonal to $\fP_{v}$, and  $\widetilde{\fP_{v}} \subset \fP$ since $\langle \fK , \fP \rangle = 0$ (cf. (4.2)).
\newline

$\mathit{The~ growth~exponents~\lambda^{-}(v)~,~\lambda^{+}(v)}~and~ minimal~ vectors$
	  	
	   For each X $\in \fP, \br^{n}$ is an orthogonal direct sum of the eigenspaces of X.  For a nonzero element v in $\br^{n}$ and a nonzero element X of $\fP$ we define $\lambda_{X}(v)$ to be the largest eigenvalue $\lambda$ of X for which v has a nonzero component in $V_{\lambda} = \{w \in \br^{n} : X(w) = \lambda w \}$.
	
	For a nonzero vector v of $\br^{n}$ we define
	
	$\lambda^{-}(v) = inf~\{\lambda_{X}(v) : X \in \widetilde{\fP_{v}}~\rm{and}~|X| = 1 \}$
	
	  $\lambda^{+}(v) = sup~\{\lambda_{X}(v) : X \in \widetilde{\fP_{v}}~\rm{and}~|X| = 1 \}$
\newline

	  A vector v in $\br^{n}$ is said to be $\mathit{minimal}$ (for the action of G) if $|g(v)| \geq |v|$ for all g $\in$ G.  If v $\in \br^{n}$ is minimal, then $\fG_{v} = \fK_{v} \oplus \fP_{v}$, orthogonal direct sum (section 6). It is known (cf. Theorem 4.4 of [RS]) that G(v) is closed in $\br^{n}$ if v is minimal, and conversely, if G(v) is closed in $\br^{n}$, then it is easy to show that G(v) contains a minimal vector w.
 \newline

$\mathit{Remark}$

 If X $\in \fP_{v}$, then it follows from the definitions that $X(v) = 0$ and  $\lambda_{X}(v) = 0$.  By definition $\widetilde{\fP_{v}}$ is orthogonal to $\fP_{v}$. Hence, in the definition of $\lambda^{-}(v)$, when we restrict consideration to unit vectors X in $ \widetilde{\fP_{v}}$  we allow the possibility that $\lambda^{-}(v) > 0$.  If this happens then the orbit G(v) is closed in $\br^{n}$ as we shall see in (10.2).
	  	
\section{The main result and its consequences}

	Define $\underline{lim}~_{d_{R}(g, G_{v}) \rightarrow \infty}~\frac{log|g(v)|}{d_{R}(g, G_{v})}$ (respectively $\overline{lim}~_{d_{R}(g, G_{v}) \rightarrow \infty}~\frac{log|g(v)|}{d_{R}(g, G_{v})}$) to be the smallest (respectively largest) limit of a sequence $\frac{log |g_{k}(v)|}{d_{R}(g_{k}, G_{v})}$, where $\{ g_{k} \}$ is any sequence in G such that $d_{R}(g_{k}, G_{v}) \rightarrow \infty$.
\newline
	
	  The main result of this paper is the following

\begin{theorem}  Let G be a closed, connected, self adjoint, noncompact subgroup of $GL(n,\br)$.  Let $d_{R}$ denote the canonical right invariant Riemannian metric on G.  Let v be a nonzero  vector in $\br^{n}$ such that the orbit G(v) is unbounded.  Then

	1)  The function $g \rightarrow d_{R}(g, G_{v})$ is unbounded on G.
	
	2)  If v is minimal, then $\lambda^{-}(v) > 0$.  For arbitrary nonzero v we have $-1 \leq \lambda^{-}(v) \leq \lambda^{+}(v) \leq 1$.
	
	3)  $\lambda^{-}(v) \leq \underline{lim} \hspace{.05in} _{d_{R}(g, G_{v}) \rightarrow \infty} \hspace{.1in} \frac{log|g(v)|}{d_{R}(g, G_{v})} \leq  \overline{lim} \hspace{.05in}  _{d_{R}(g, G_{v}) \rightarrow \infty} \hspace{.1in}\frac{log|g(v)|}{d_{R}(g, G_{v})} \leq \lambda^{+}(v)$
\end{theorem}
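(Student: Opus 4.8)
The plan is to treat the three assertions in turn: (1) and the crude form of (3) by a Grönwall estimate, (2) by a short spectral argument exploiting minimality, and (3) by passing to polar coordinates on $\fP$ after replacing $g$ by an efficient coset representative, the substance being a structural estimate on such representatives.

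\emph{Assertion (1) and a crude bound.} For any $C^{1}$ path $\gamma\colon[0,1]\to G$ with $\gamma(0)\in G_{v}$, $\gamma(1)=g$, set $\xi(t)=\dot\gamma(t)\gamma(t)^{-1}\in\fG$, so that the $d_{R}$-speed of $\gamma$ at $t$ is $|\xi(t)|$, and put $w(t)=\gamma(t)(v)$. Then $w'(t)=\xi(t)(w(t))$, so $\frac{d}{dt}\log|w(t)|\le\|\xi(t)\|_{\mathrm{op}}\le|\xi(t)|$, and integrating gives $\log|g(v)|-\log|v|\le\operatorname{length}_{d_{R}}(\gamma)$; taking the infimum over $\gamma$ and over $\gamma(0)\in G_{v}$ yields $\log|g(v)|\le\log|v|+d_{R}(g,G_{v})$ for all $g$. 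In particular, if $d_{R}(\cdot,G_{v})$ were bounded by $R$ then $|g(v)|\le e^{R}|v|$ for all $g$, contradicting unboundedness of $G(v)$; this proves (1), and the same inequality already gives $\overline{\lim}\le 1$ in (3).

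\emph{Assertion (2).} Every $X\in\widetilde{\fP_v}$ is symmetric, so $|X|^{2}=\operatorname{tr}(X^{2})=\sum_{i}m_{i}\mu_{i}^{2}$ with $\mu_{i}$ the eigenvalues and $m_{i}$ their multiplicities; hence $|X|=1$ forces every $\mu_{i}\in[-1,1]$, so $-1\le\lambda_{X}(v)\le1$ and therefore $-1\le\lambda^{-}(v)\le\lambda^{+}(v)\le1$, once one knows $\widetilde{\fP_v}\ne\{0\}$ when $G(v)$ is unbounded (from the structure theory of Sections 4--6). Now let $v$ be minimal and $X\in\widetilde{\fP_v}$ with $|X|=1$; if $\lambda_{X}(v)\le0$ then every eigenvalue of $X$ in the support of $v$ is $\le0$, so either $|\exp(tX)(v)|\to0$ as $t\to+\infty$ or $\frac{d}{dt}\big|_{t=0}|\exp(tX)(v)|^{2}=2\langle v,X(v)\rangle<0$, each contradicting minimality --- unless all those eigenvalues are $0$, i.e. $X(v)=0$, i.e. $X\in\fG_{v}$, contradicting $0\ne X\perp\fG_{v}$. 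Thus $\lambda_{X}(v)>0$ for every unit $X\in\widetilde{\fP_v}$; since $X\mapsto\lambda_{X}(v)$ is lower semicontinuous on $\fP\setminus\{0\}$ (continuity of spectral projections onto an isolated part of the spectrum shows $v$ keeps a nonzero component near the eigenvalue $\lambda_{X}(v)$ of all nearby operators) and the unit sphere of $\widetilde{\fP_v}$ is compact, the infimum $\lambda^{-}(v)$ is attained and is $>0$.

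\emph{Assertion (3): reduction to polar coordinates.} Since $\langle\,,\rangle$ is $\operatorname{Ad}(K)$-invariant, $d_{R}$ is left-$K$-invariant as well as right-$G$-invariant; with the polar decomposition $G=K\exp(\fP)$ and the fact that $t\mapsto\exp(tX)$ is a minimizing $d_{R}$-geodesic for $X\in\fP$ (so $d_{R}(\exp X,I)=|X|$), this gives $\big|d_{R}(g,I)-|X|\big|\le\operatorname{diam}_{d_{R}}(K)$ whenever $g=\kappa\exp(X)$ with $\kappa\in K$. Both $\log|g(v)|$ and $d_{R}(g,G_{v})$ are right-$G_{v}$-invariant, so I replace $g$ by a representative $g'\in gG_{v}$ with $d_{R}(g',I)\le d_{R}(g,G_{v})+1$; writing $g'=\kappa'\exp(Y)$ gives $|g(v)|=|g'(v)|=|\exp(Y)(v)|$ with $\big||Y|-d_{R}(g,G_{v})\big|$ bounded by a constant depending only on $G$. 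Finally $|\exp(Y)(v)|^{2}=\sum_{v_{i}\ne0}e^{2\mu_{i}}|v_{i}|^{2}$ sandwiches $\log|\exp(Y)(v)|$ between $\lambda_{Y}(v)+\log|v_{\mathrm{top}}|$ and $\lambda_{Y}(v)+\log|v|$, where $v_{\mathrm{top}}\ne0$ is the component of $v$ in the top $v$-supported eigenspace of $Y$. So everything reduces to comparing $\lambda_{Y}(v)=|Y|\,\lambda_{\hat Y}(v)$, for the polar direction $\hat Y=Y/|Y|$ of an efficient representative, with $|Y|\,\lambda^{\pm}(v)$.

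\emph{Assertion (3): the main estimate and the obstacle.} The heart of the matter --- and the step I expect to be hardest --- is to show that for efficient coset representatives the polar direction $\hat Y$ is asymptotically confined to the unit sphere of $\widetilde{\fP_v}$ as $d_{R}(g,G_{v})\to\infty$; equivalently, that the components of $Y$ in $\fP_{v}$ and in $\fP\cap(\fK+\fG_{v})$ stay bounded, since an unbounded such component could be partly absorbed by a $K$- or $G_{v}$-translation, which would keep $d_{R}(g,G_{v})$ bounded. The mechanism is already visible in examples --- e.g. for $G=SO(2,1)^{\circ}$ acting on $\br^{3}$ with $v$ spacelike, the only unit $\hat Y\in\fP$ with $\lambda_{\hat Y}(v)$ below $\lambda^{-}(v)$ lies in $\fG_{v}$, hence cannot be the direction of an unboundedly far representative --- but a clean proof will require the fine structure of self adjoint groups and of minimal vectors developed earlier. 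Granting it, let $g_{k}$ be any sequence with $r_{k}:=d_{R}(g_{k},G_{v})\to\infty$ and $\log|g_{k}(v)|/r_{k}$ convergent; choosing efficient representatives and passing to a subsequence gives $\hat Y_{k}\to\hat Y_{\infty}$ with $|\hat Y_{\infty}|=1$, $\hat Y_{\infty}\in\widetilde{\fP_v}$, and $|Y_{k}|/r_{k}\to1$. The $\log|v_{\mathrm{top}}|$ correction then becomes negligible against $r_{k}$ (once $\hat Y_{k}$ is near the compact sphere of $\widetilde{\fP_v}$ and the $v$-supported spectrum is controlled), so $\log|g_{k}(v)|/r_{k}\to\lambda_{\hat Y_{\infty}}(v)\in[\lambda^{-}(v),\lambda^{+}(v)]$ by the very definition of $\lambda^{\pm}(v)$. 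Hence every subsequential limit lies in $[\lambda^{-}(v),\lambda^{+}(v)]$, which is assertion (3); together with $\lambda^{-}(v)>0$ for minimal $v$ from part (2), this also shows $\log|g(v)|$ grows at least linearly in $d_{R}(g,G_{v})$ in that case.
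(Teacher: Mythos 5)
Your Gr\"onwall estimate for assertion 1) is a nice, more elementary shortcut: from $\tfrac{d}{dt}\log|\gamma(t)(v)| \le \|\dot\gamma\gamma^{-1}\|_{\mathrm{op}} \le |\dot\gamma\gamma^{-1}|$ and integration you get $\log|g(v)| \le \log|v| + d_R(g,G_v)$, which proves 1) and the crude bound $\overline{\lim}\le 1$ in one stroke; the paper instead routes 1) through the equivalences of Proposition 6.5, whose proof uses the KP decomposition and convexity of $t\mapsto |\exp(tX)(v)|^2$.  Your argument for 2) is essentially the paper's Proposition 7.6: minimality forces $\langle X(v),v\rangle=0$, $X\in\widetilde{\fP_v}$ forces $X(v)\neq 0$, so some eigenvalue in the support of $v$ is positive.

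The gap in assertion 3) is real and is exactly where you place it, and the ``absorption'' heuristic you sketch does not translate into a proof: $\exp$ is nonabelian, and $\widetilde{\fP_v}$ is not $\mathrm{Ad}(G_v)$-invariant when $G_v$ is noncompact, so an arbitrary near-minimizer of $d_R(\cdot,I)$ on $gG_v$ has no reason to have its polar part close to $\exp(\widetilde{\fP_v})$.  The paper does not try to prove any such asymptotic confinement; it instead proves the generalized KP decomposition (Lemma 2.4 / Proposition 5.6): for every $g\in G$ there exist $k\in K$, $h\in G_v$ and $X\in\widetilde{\fP_v}$ with $g=k\exp(X)h$ and $|X|=d_R(g,K\cdot G_v)$.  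The proof is Riemannian: $(G,d_R)$ is complete and $K\cdot G_v$ is closed, so there is a minimizing geodesic $\gamma$ from $I$ to $g'=k^{-1}gh^{-1}$ (with $kh$ a nearest point of $K\cdot G_v$ to $g$); perpendicularity at the nearest point gives $\gamma'(0)\in(\fK+\fG_v)^\perp=\widetilde{\fP_v}$, and $\gamma(s)=\exp(s\gamma'(0))$ because $\fP$-directions are geodesic by Corollary 4.5.  With $X$ in $\widetilde{\fP_v}$ from the outset, $|g(v)|=|\exp(X)(v)|$ and $d_R(g,K\cdot G_v)=|X|$, so your polar analysis applies with no confinement lemma needed.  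Even then, passing to the limit along the unit directions $Y_r=X_r/|X_r|$ requires the eigenvalue/eigenspace convergence bookkeeping of items a)--e) of Proposition 7.7, which your phrase ``the $v$-supported spectrum is controlled'' only gestures at; the paper carries this out in Propositions 8.4--8.6 and sharpens the lower bound via the set $Q_v$.
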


$\mathit{Remarks}$

	1)  Let c be a positive constant such that $d_{R}(k,I) \leq c$ for all k $\in$ K, where I denotes the identity matrix.  The triangle inequality implies that $|d_{R}(g,G_{v}) - d_{R}(g, K \cdot G_{v}| \leq c$ for all g $\in$ G.  Hence we could replace $d_{R}(g, G_{v})$ by $d_{R}(g, K \cdot G_{v}$ in the statement of the result above.  This replacement is convenient for the proof of (2.1).  In section 5 we show that if v is a nonzero minimal vector, then $K \cdot G_{v}$ is the minimum set and also the set of critical points for the function $F_{v} : G \rightarrow \br$ given by $F_{v}(g) = |g(v)|^{2}$.

	2)  A sharper version of the inequality for $\lambda^{-}(v)$ in assertion 3) is obtained in (8.2), but the statement of that result involves a quantity that is more difficult to define and compute than $\lambda^{-}(v)$.  It may be the case that the bounds $\lambda^{-}(v)$ and $\lambda^{+}(v)$ are sharp in the main result, but we are only able to prove this in the case that $G_{v}$ is compact.
	
	3)  The set of vectors v in V for which $dim~ G_{v} \leq dim~ G_{v'}$ for all $v' \in$ V is a nonempty G-invariant Zariski open subset O.  The function $\lambda^{-}$ is lower semicontinuous on O (cf. (9.1)) and by 2) of (2.1) $\lambda^{-}$ is positive on the set of minimal vectors in V whose G-orbits in V are unbounded.  By (7.3)  $\lambda^{-}$ is K-invariant, but it is unclear if $\lambda^{-}$ is G-invariant.

    4)  If $\fM$ denotes the set of minimal vectors in $\br^{n}$, then for a compact subset C of $\fM~\cap~O$ and v $\in$ C the functions $g \rightarrow \frac{log|g(v)|}{d_{R}(g, G_{v})}$ have a uniform lower bound.  See (9.4) for a precise statement.
\newline

$\mathit{Closed~orbits}$

    5)  If G(v) is closed in $\br^{n}$ for some nonzero vector v, then G(v) contains a minimal vector w by (6.3) and $\lambda^{-}(w) > 0$ by (7.6).  Conversely, if $\lambda^{-}(w) > 0$ for some nonzero vector w, then G(w) is closed in $\br^{n}$ by (2.2) and (2.5).  Hence G has a nontrivial closed orbit in $\br^{n} \Leftrightarrow \lambda^{-}(w) > 0$ for some nonzero vector w of $\br^{n}$.  If G does not have a nontrivial closed orbit in $\br^{n}$, then the zero vector lies in $\overline{G(v)}$ for every v $\in \br^{n}$ by Lemma 3.3 of [RS].

    6)  Let O be the nonempty G-invariant Zariski open subset of $\br^{n}$ that is defined above in 3).  Under either of the following hypotheses  there exists a nonempty G-invariant Zariski open subset U of $\br^{n}$ such that $U \subset O$ and G(w) is closed in $\br^{n}$ for every w $\in$ U.   The second hypothesis appears in the statement of (2.3) below.

\hspace{.2in} 	a)  $\lambda^{-}(v) > 0$ for some v $\in$ O.
	
\hspace{.2in}	b)  $G_{v}$ is compact for some nonzero vector v $\in \br^{n}$.

    In case a) the orbit G(v) is closed in $\br^{n}$ by (2.2) and (2.5), and it has maximal dimension among the G-orbits by the definition of O.  The assertion of 6) in case a) is now well known in the complex case where $G \subset GL(n,\bc)$ acts on $\bc^{n}$.  For a proof in the real case see, for example, Proposition 2.1 of [EJ].  The assertion of 6) in case b) is proved in Proposition 2.6 of [EJ].
\newline

	The next result, which we prove in (6.4), proves the first statement of (2.1) and also shows that an orbit G(v) is bounded in $\br^{n} \Leftrightarrow$ G(v) is compact.
	
	\begin{proposition}  Let G denote a closed, connected, noncompact subgroup of $GL(n,\br)$, and let v be a nonzero vector of $\br^{n}$.  Then the following are equivalent.

	1)  The orbit G(v) is bounded in $\br^{n}$.
	
	2)  The function $g \rightarrow d_{R}(g, G_{v})$ is bounded on G.
	
	3)  $G = K \cdot G_{v}$
	
	4)  $X(v) = 0$ for any X $\in \fP$.
	
\noindent If G has no nontrivial compact, connected, normal subgroups, then G fixes v if any of the conditions above hold.
\end{proposition}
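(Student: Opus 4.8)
The plan is to establish the equivalence of (1)--(4) through the cycle $(1)\Rightarrow(4)\Rightarrow(3)\Rightarrow(2)\Rightarrow(1)$ and then to treat the concluding assertion separately. Throughout I would use the Cartan decomposition $G=K\exp(\fP)$, which is available because $G$ is self adjoint: for $g\in G$ the positive symmetric polar factor $p=(g^{t}g)^{1/2}$ again lies in $G$ and equals $\exp(X)$ with $X=\frac{1}{2}\log(g^{t}g)\in\fP$, while $k=gp^{-1}\in G\cap O(n,\br)=K$. I would also use, as in Remark 1 following (2.1), that $|d_{R}(\cdot,G_{v})-d_{R}(\cdot,K\cdot G_{v})|\le c:=\sup_{k\in K}d_{R}(k,I)<\infty$.

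For $(1)\Rightarrow(4)$: given $X\in\fP$, decompose $\br^{n}$ into the orthogonal sum of the eigenspaces $V_{\lambda}$ of the symmetric matrix $X$ and write $v=\sum_{\lambda}v_{\lambda}$; then $|\exp(tX)(v)|^{2}=\sum_{\lambda}e^{2t\lambda}|v_{\lambda}|^{2}$, and since $G(v)$ is bounded this stays bounded for all $t\in\br$, forcing $v_{\lambda}=0$ whenever $\lambda\ne0$, i.e. $X(v)=0$. For $(4)\Rightarrow(3)$: condition (4) says precisely $\fP\subset\fG_{v}$, hence $\exp(\fP)\subset G_{v}$ and $G=K\exp(\fP)\subset K\cdot G_{v}\subset G$. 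For $(3)\Rightarrow(2)$: writing $g=kh$ with $k\in K$, $h\in G_{v}$ and using right invariance, $d_{R}(g,G_{v})\le d_{R}(kh,h)=d_{R}(k,I)\le c$. The step $(2)\Rightarrow(1)$ is where the metric is genuinely used: a Gr\"onwall estimate along curves of length $\le r$ bounds both $\|a\|$ and $\|a^{-1}\|$ for $a$ with $d_{R}(a,I)\le r$, so such $a$ remain in a compact subset of $GL(n,\br)$; hence if $d_{R}(\cdot,G_{v})$ is bounded then so is $d_{R}(\cdot,K\cdot G_{v})$, and choosing for each $g$ an element $y_{g}=k_{g}h_{g}\in K\cdot G_{v}$ with $d_{R}(g,y_{g})\le d_{R}(g,K\cdot G_{v})+1$ puts $gy_{g}^{-1}$ in a fixed bounded subset of $GL(n,\br)$; since $h_{g}(v)=v$ we get $g(v)=(gy_{g}^{-1})(k_{g}(v))$, so $|g(v)|\le\|gy_{g}^{-1}\|_{\mathrm{op}}\,|v|$ is bounded.

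For the final assertion, suppose $G$ has no nontrivial compact connected normal subgroup and that (4) holds, i.e. $\fP\subset\fG_{v}$; since $G$ is connected it suffices to prove $\fG\subset\fG_{v}$. Self adjointness makes $\br^{n}$ a semisimple $\fG$-module (a $\fG$-invariant subspace has a $\fG$-invariant orthogonal complement), so $\fG$ is reductive: $\fG=\fZ\oplus\fG'$ with $\fZ$ the center and $\fG'=[\fG,\fG]$ semisimple, and both summands are stable under transpose, giving $\fK=\fZ_{\fK}\oplus\fK'$ and $\fP=\fZ_{\fP}\oplus\fP'$ with $\fZ_{\fK}=\fZ\cap\fK$, $\fK'=\fG'\cap\fK$, and so on. The closure of the connected abelian group $\exp(\fZ_{\fK})\subset K$ is a compact connected normal subgroup of $G$, hence trivial, so $\fZ_{\fK}=0$; and any compact simple ideal of $\fG'$ would generate (via Weyl's theorem) a compact connected normal subgroup of $G$, so there is none, i.e. $\fG'$ is semisimple of noncompact type. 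For such a $\fG'$ one has $\fK'=[\fP',\fP']$: the Killing-form orthocomplement in $\fK'$ of the ideal $[\fP',\fP']$ is an ideal of $\fG'$ contained in $\fK'$, hence of compact type, hence zero. Therefore $\fK=\fK'=[\fP',\fP']\subset[\fP,\fP]\subset\fG_{v}$, the last inclusion because $\fG_{v}$ is a subalgebra containing $\fP$, and together with $\fP\subset\fG_{v}$ this yields $\fG=\fK\oplus\fP\subset\fG_{v}$, as wanted.

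I expect the main obstacle to be the concluding assertion, for two reasons. First, the connected subgroups attached to ideals (the abelian group $\exp(\fZ_{\fK})$ and the group of a compact simple factor of $\fG'$) need not be closed in $G$, so one must pass to their closures to produce genuine compact connected normal subgroups before invoking the hypothesis. Second, the identity $[\fP',\fP']=\fK'$ for semisimple Lie algebras without compact factors rests on the signature of the Killing form relative to the Cartan decomposition and deserves care. By contrast the equivalences are routine once $G=K\exp(\fP)$ is in hand; the only mildly delicate point there is the boundedness of $d_{R}$-balls about $I$ (equivalently, completeness of $d_{R}$), for which the Gr\"onwall argument above, or the homogeneity of $(G,d_{R})$ under right translations, suffices.
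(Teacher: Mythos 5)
Your cycle $(1)\Rightarrow(4)\Rightarrow(3)\Rightarrow(2)\Rightarrow(1)$ mirrors the paper's (Appendix II proves the equivalent (6.5)), and the first three implications are essentially identical in substance: for $(1)\Rightarrow(4)$ the paper appeals to convexity of $f_{X}(t)=|\exp(tX)(v)|^{2}$ whereas you diagonalize $X$ directly, but these are the same computation; $(4)\Rightarrow(3)$ and $(3)\Rightarrow(2)$ coincide. For $(2)\Rightarrow(1)$ you diverge: you argue that $d_{R}$-balls are precompact in $GL(n,\br)$ and then bound $|g(v)|$ by an operator norm, while the paper invokes the refined decomposition $g=k\exp(X)h$ from (5.6), notes $|X|=d_{R}(g,K\cdot G_{v})\le A$, and reads off $|g(v)|\le e^{A}|v|$ from the eigenvalues of $X$. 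Both are valid; the paper's is a touch more explicit and reuses (5.6), which is already needed for the main theorem. The real divergence is the concluding assertion. You pass to the reductive decomposition $\fG=\fZ\oplus\fG'$, eliminate $\fZ\cap\fK$ and the compact simple factors of $\fG'$ using the hypothesis together with Weyl's theorem, and invoke the identity $[\fP',\fP']=\fK'$ for a semisimple algebra of noncompact type. The paper instead shows in one stroke, via the metric adjoint formula $(\mathrm{ad}\,A)^{*}=\mathrm{ad}\,A^{t}$ from (4.2), that $\fK_{1}:=\{X\in\fK:\langle X,[\fP,\fP]\rangle=0\}$ is an ideal of $\fG$, so that $\overline{K_{1}}$ is a compact connected normal subgroup, which the hypothesis forces to be trivial; hence $[\fP,\fP]=\fK$ directly. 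The paper's route is more self-contained, avoiding both Weyl's theorem and the identification of $\fG'$ with the derived algebra; yours is a clean use of classical structure theory but imports several facts the paper chose to bypass. One small imprecision worth noting: you refer to ``the ideal $[\fP',\fP']$,'' but $[\fP',\fP']$ is an ideal of $\fK'$, not of $\fG'$. What is actually an ideal of $\fG'$ is the Killing-orthocomplement of $[\fP',\fP']$ inside $\fK'$ — one checks it equals the centralizer of $\fP'$ in $\fK'$ — and that is the ideal your argument needs. With that correction the approach goes through.
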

	
	If $G_{v}$ is compact, then we may sharpen (2.1) to obtain something valid for both $d_{R}$ and $d_{L}$.
	
\begin{corollary}  Let G be a closed, connected, self adjoint, noncompact subgroup of $GL(n,\br)$.  Let $d_{R}$ and $d_{L}$ denote respectively the canonical right invariant and left invariant Riemannian metrics on G.  Let I denote the identity matrix in $GL(n,\br)$.  Let v be a nonzero  vector such that the orbit G(v) is unbounded and $G_{v}$ is compact.  Then

	   $\lambda^{-}(v) = \underline{lim} \hspace{.05in} _{d_{R}(g, I) \rightarrow \infty} \hspace{.1in} \frac{log|g(v)|}{d_{R}(g, I)} = \underline{lim} \hspace{.05in} _{d_{L}(g, I) \rightarrow \infty} \hspace{.1in} \frac{log|g(v)|}{d_{L}(g, I)}\leq  \overline{lim} \hspace{.05in}  _{d_{R}(g, I) \rightarrow \infty} \hspace{.1in}\frac{log|g(v)|}{d_{R}(g, I)} = \overline{lim} \hspace{.05in}  _{d_{L}(g, I) \rightarrow \infty} \hspace{.1in}\frac{log|g(v)|}{d_{L}(g, I)} = \lambda^{+}(v)$
\end{corollary}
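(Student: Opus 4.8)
The plan is to deduce the Corollary from Theorem~2.1, which already gives the claimed inequality with $d_{R}(g,G_{v})$ in the denominator, by two distance comparisons and then by establishing sharpness through explicit one parameter subgroups. First I would replace $d_{R}(g,G_{v})$ by $d_{R}(g,I)$. Since $G$ is closed, $K=G\cap O(n,\br)$ is compact, and $G_{v}$ is compact by hypothesis, so $K\cdot G_{v}$ is a compact subset of $G$ containing $I$; hence $\delta:=\sup_{p\in K\cdot G_{v}}d_{R}(p,I)<\infty$ and $|d_{R}(g,I)-d_{R}(g,K\cdot G_{v})|\le\delta$ for all $g$. Combined with Remark~1 following (2.1) this yields a constant $c_{1}$ with $|d_{R}(g,I)-d_{R}(g,G_{v})|\le c_{1}$ for all $g\in G$. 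Consequently $d_{R}(g,I)\to\infty$ iff $d_{R}(g,G_{v})\to\infty$, and along any sequence with $d_{R}(g_{k},G_{v})\to\infty$ the quotients $\frac{\log|g_{k}(v)|}{d_{R}(g_{k},I)}$ and $\frac{\log|g_{k}(v)|}{d_{R}(g_{k},G_{v})}$ have the same subsequential limits, because by 3) of (2.1) the second quotient is bounded (its subsequential limits lie in $[\lambda^{-}(v),\lambda^{+}(v)]$). Thus $\underline{lim}$ and $\overline{lim}$ taken with $d_{R}(g,I)$ equal the quantities in (2.1), and in particular lie in $[\lambda^{-}(v),\lambda^{+}(v)]$.

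Next I would pass from $d_{R}$ to $d_{L}$. Because $\langle A,B\rangle=\operatorname{tr}(AB^{t})$ is invariant under transposition and $G$ is self adjoint, the map $\theta:g\mapsto g^{t}$ intertwines right and left translations and has $d\theta_{I}$ an isometry of $\fG$, so $\theta$ is an isometry of $(G,d_{R})$ onto $(G,d_{L})$; in particular $d_{L}(\exp X,I)=d_{R}(\exp X,I)$ for $X\in\fP$. Using the Cartan decomposition $G=K\exp(\fP)$ of the self adjoint group $G$ together with right- (resp. left-) invariance, for $g=k\exp(X)$ with $k\in K$, $X\in\fP$ one has $|d_{R}(g,I)-d_{R}(\exp X,I)|\le d_{R}(g,\exp X)=d_{R}(k,I)\le c$ and likewise $|d_{L}(g,I)-d_{L}(\exp X,I)|\le d_{L}(k,I)\le c$ (using $d_{L}(g,k)=d_{L}(\exp X,I)$). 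Chaining these, $|d_{R}(g,I)-d_{L}(g,I)|\le 2c$ for all $g\in G$, which is the assertion, bundled into the statement, that $g\mapsto|d_{R}(g,I)-d_{L}(g,I)|$ is bounded (this part in fact uses only self adjointness). As in Step~1, boundedness of this difference and of the relevant quotients shows $d_{L}(g,I)\to\infty$ iff $d_{R}(g,I)\to\infty$ and makes $\underline{lim}$, $\overline{lim}$ with $d_{L}(g,I)$ equal to those with $d_{R}(g,I)$. All the equalities in the statement now hold, and it remains only to identify the outer terms with $\lambda^{+}(v)$ and $\lambda^{-}(v)$.

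For sharpness, fix a unit $X\in\widetilde{\fP_{v}}$ and decompose $v=\sum_{\lambda}v_{\lambda}$ over the eigenspaces of $X$; then $|\exp(tX)v|^{2}=\sum_{\lambda}e^{2t\lambda}|v_{\lambda}|^{2}$ gives $t\lambda_{X}(v)+\log|v_{\lambda_{X}(v)}|\le\log|\exp(tX)v|\le t\lambda_{X}(v)+\log(\sqrt{n}\,|v|)$, with $v_{\lambda_{X}(v)}\ne 0$ by definition of $\lambda_{X}(v)$ (note $v\notin\ker X$ since $X\ne 0$ and $\widetilde{\fP_{v}}\perp\fP_{v}$). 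For the denominator, the computation $\langle X,[X,\eta]\rangle=\operatorname{tr}(X[X,\eta]^{t})=\operatorname{tr}(X^{2}\eta^{t})-\operatorname{tr}(X^{2}\eta^{t})=0$ for all $\eta\in\fG$ (using $X^{t}=X$) shows, via the Euler--Arnold equation, that $t\mapsto\exp(tX)$ is a unit-speed geodesic of both $d_{R}$ and $d_{L}$; in particular $d_{R}(\exp(tX),I)\le t$, while $d_{R}(\exp(tX),I)\to\infty$ by Step~1 (for $X\ne 0$, $\exp(tX)$ leaves every compact subset of $G$). Moreover $\lambda^{+}(v)>0$: $G(v)$ unbounded forces $\widetilde{\fP_{v}}\ne\{0\}$ by (2.2), and a nonzero $X\in\widetilde{\fP_{v}}$ has $v\notin\ker X$, so one of $\pm X$ has $v$ with a nonzero component in a positive eigenspace. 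Picking unit $X_{m}\in\widetilde{\fP_{v}}$ with $\lambda_{X_{m}}(v)\to\lambda^{+}(v)$ and putting $g_{t}=\exp(tX_{m})$, for $m$ large we get $\frac{\log|g_{t}(v)|}{d_{R}(g_{t},I)}\ge\frac{t\lambda_{X_{m}}(v)+O_{m}(1)}{t}\to\lambda_{X_{m}}(v)$, hence $\overline{lim}\ge\lambda^{+}(v)$; with Step~1 this gives the upper equality. For the lower bound, $X\mapsto\lambda_{X}(v)$ is lower semicontinuous on the unit sphere of $\widetilde{\fP_{v}}$, so $\lambda^{-}(v)$ is attained at some unit $X_{*}$ (a minimizing sequence would also do); if $\lambda^{-}(v)\le 0$, then with $g_{t}=\exp(tX_{*})$ the estimates $\log|g_{t}(v)|\le t\lambda^{-}(v)+O(1)$ and $0<d_{R}(g_{t},I)\le t$ give $\frac{\log|g_{t}(v)|}{d_{R}(g_{t},I)}\le\frac{\log|g_{t}(v)|}{t}\le\lambda^{-}(v)+o(1)$, so $\underline{lim}\le\lambda^{-}(v)$.

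The remaining case $\lambda^{-}(v)>0$ (equivalently, by (7.6), $G(v)$ closed in $\br^{n}$) is where I expect the genuine difficulty. There $\log|\exp(tX_{*})v|=t\lambda^{-}(v)+O(1)$ is positive and linearly growing, so to obtain $\frac{\log|\exp(tX_{*})v|}{d_{R}(\exp(tX_{*}),I)}\to\lambda^{-}(v)$ I need the lower estimate $d_{R}(\exp(tX_{*}),I)\ge t-c$; Theorem~2.1 by itself only yields $d_{R}(\exp(tX_{*}),I)\ge(\lambda^{-}(v)/\lambda^{+}(v))\,t-c$, which is too weak when $\lambda^{-}(v)<\lambda^{+}(v)$. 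The crux is therefore to prove that the geodesic ray $t\mapsto\exp(tX)$ is coarsely minimizing — $d_{R}(\exp(tX),I)\ge t|X|-c$, uniformly for unit $X\in\widetilde{\fP_{v}}$ — and I would attack this through the structure of the minimum set of $F_{v}(g)=|g(v)|^{2}$: when $v$ is minimal, $K\cdot G_{v}$ is the critical and minimum set of $F_{v}$ and $\widetilde{\fP_{v}}$ is its normal space at $I$, and one expects $d_{R}(g,K\cdot G_{v})$ to be comparable to the size of the $\widetilde{\fP_{v}}$-part of $g$, which together with the compactness of $K\cdot G_{v}$ converts into the needed bound on $d_{R}(g,I)$. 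The only further point requiring care is that $X\mapsto\lambda_{X}(v)$ is discontinuous, so the supremum and infimum defining $\lambda^{\pm}(v)$ must be approached by suitably chosen sequences of $X$ rather than a single $X$, as above.
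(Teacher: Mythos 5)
Your Steps 1 and 2 (replacing $d_R(g,G_v)$ by $d_R(g,I)$, then showing $|d_R(g,I)-d_L(g,I)|$ is bounded via the transpose isometry and the Cartan decomposition) are correct and match the paper's proof of the weak version (*) almost verbatim. The sharpness argument, however, has a genuine gap in the case $\lambda^-(v)>0$ — which you yourself flag. The missing fact is not $d_R(\exp(tX),I)\ge t-c$ but the exact equality $d_R(\exp(tX),I)=t$ for unit $X\in\fP$: this is Corollary~(5.5), which states that $s\mapsto\exp(sX)$ is a \emph{minimizing} geodesic of both $(G,d_R)$ and $(G,d_L)$. Your Euler--Arnold computation shows it is a geodesic, but geodesics in a Riemannian manifold need not be minimizing, so knowing $d_R(\exp(tX),I)\le t$ does not close the case $\lambda^-(v)>0$.

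The proof of (5.5) is short and uses nothing beyond the tools you already invoke. The KP decomposition of (5.4) is constructed by metric projection: given $g\in G$, take $k\in K$ nearest to $g$ in $d_L$, and a minimizing geodesic from $I$ to $k^{-1}g$ must leave $I$ orthogonally to $K$, hence with initial velocity $X\in\fP$, and then equals $\exp(sX)$; this yields $g=k\exp(X)$ with $|X|=d_L(g,K)$. Applying this with $g=\exp(sX)$ for unit $X\in\fP$, the uniqueness of the decomposition forces $k=I$ and the symmetric factor to be $\exp(sX)$, so $d_L(\exp(sX),K)=|sX|=s$; since $I\in K$, $d_L(\exp(sX),I)\ge s$, while the geodesic gives $\le s$, hence equality, and the transpose isometry carries it to $d_R$. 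With this in hand, for the extremizer $X_*$ with $\lambda_{X_*}(v)=\lambda^-(v)$ (which exists by 2) of (7.7)), you get $\frac{\log|\exp(tX_*)v|}{d_R(\exp(tX_*),I)}=\frac{\log|\exp(tX_*)v|}{t}\to\lambda^-(v)$ by (7.1), with no sign condition on $\lambda^-(v)$ needed, and your case split becomes unnecessary. Your proposed detour through the minimum set of $F_v$ and the moment map is not the route the paper takes and is not needed; the key is minimality of the one-parameter geodesic, which you should deduce from the KP decomposition you have already set up.
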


$\mathit{Remarks}$
	
	1)  The replacement of $d_{R}(g, G_{v})$ by $d_{L}(g, G_{v})$ in the statement of (2.1) does not always hold if $G_{v}$ is noncompact.  See Appendix I for an example.  Note that G acts on $\br^{n}$ on the left, and this may be relevant to the asymmetry of $d_{R}$ and $d_{L}$ in (2.1).
	
	2)  Even in the case that $G_{v}$ is compact the quantities in the main result involving $d_{R}(g,I)$ cannot be replaced conveniently by estimates involving $|g|$, where $|A|^{2} = trace~ AA^{t}$ for A $\in M(n,\br)$.  This may reflect the fact that $g \rightarrow d_{R}(g,I)$ and $g \rightarrow |g|$ are the distances between g and the identity in the typically nonabelian group $(G,d_{R})$ and the abelian group $M(n,\br)$ with the Euclidean distance.  See the end of section 5 for details.
\newline

	The main tools needed for the proof of the main result are the KP decomposition of G, (5.4), and the following generalization of it that is proved in (5.6).
	
\begin{lemma}  Let G,v and $d_{R}$ be as in (2.1).  For every g $\in$ G there exist elements k $\in$ K, h $\in G_{v}$ and $X \in \widetilde{\fP_{v}}$ such that $g = k~ exp(X)~ h$ and $|X| = d_{R}   (g,K \cdot G_{v}) = d_{R}(exp(X), K \cdot G_{v}) = d_{R}(exp(X), Id)$.
\end{lemma}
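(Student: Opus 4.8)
The plan is to realise $\widetilde{\fP_{v}}$ as the $\langle\,,\rangle$-orthogonal complement of the tangent space $\fK+\fG_{v}$ to $K\cdot G_{v}$ at $I$, and to obtain the factorisation $g=k\,\exp(X)\,h$ as a shortest ``straight line'' from $g$ to $K\cdot G_{v}$, with the KP decomposition (5.4) doing the heavy lifting. First I would record the metric facts that make this go. Because the canonical inner product on $\fG$ is $\mathrm{Ad}(K)$-invariant, the right invariant metric $d_{R}$ is also left $K$-invariant, i.e.\ $d_{R}(ka,kb)=d_{R}(a,b)$ for $k\in K$; and for $h\in G_{v}$ right invariance gives $(K\cdot G_{v})h=K\cdot G_{v}$. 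From (5.4), if $g=k\,\exp(X)$ with $X\in\fP$ then $|X|=d_{R}(g,K)=d_{R}(\exp(X),I)$, and since $X$ commutes with $\exp(tX)$ the curve $t\mapsto\exp(tX)$ has constant right-translated velocity $X$, so in any case $d_{R}(\exp(X),I)\le|X|$ for $X\in\fP$.

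Step 1 (a shortest factorisation exists). I would prove
\[
\min\{\,|X|: g=k\,\exp(X)\,h,\ k\in K,\ X\in\fP,\ h\in G_{v}\,\}=d_{R}(g,K\cdot G_{v}),
\]
the minimum being attained. For any such factorisation, right invariance and left $K$-invariance give $d_{R}(g,K\cdot G_{v})=d_{R}(k\exp(X),K\cdot G_{v})\le d_{R}(k\exp(X),k)=d_{R}(\exp(X),I)\le|X|$. Conversely $K\cdot G_{v}$ is closed (compact times closed) and $(G,d_{R})$ is complete, so some $p_{0}=k_{0}h_{0}\in K\cdot G_{v}$ realises $d_{R}(g,K\cdot G_{v})$; right invariance and $K h_{0}\subset K\cdot G_{v}$ then force $d_{R}(g h_{0}^{-1},K)=d_{R}(g,K\cdot G_{v})$, and the KP factorisation $g h_{0}^{-1}=k_{1}\exp(Y)$ of (5.4) has $|Y|=d_{R}(g h_{0}^{-1},K)$, so $g=k_{1}\exp(Y)h_{0}$ attains the bound. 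Fix a factorisation $g=k\,\exp(X)\,h$ with $|X|$ minimal. Then $|X|=d_{R}(g,K\cdot G_{v})$; and since $\exp(X)=k^{-1}g h^{-1}$, the identities above give $d_{R}(\exp(X),K\cdot G_{v})=d_{R}(k^{-1}g,K\cdot G_{v})=d_{R}(g,K\cdot G_{v})=|X|$, while $d_{R}(\exp(X),I)=|X|$ by (5.4). Only $X\in\widetilde{\fP_{v}}$ remains.

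Step 2 (the $\fP$-part lies in $\widetilde{\fP_{v}}$). I would perturb the minimal factorisation inside $G_{v}$: fix $B\in\fG_{v}$, put $h(s)=\exp(sB)$, and KP-decompose the smooth curve $\exp(X)h(s)^{-1}=k'(s)\exp(X(s))$ with $k'(0)=I$, $X(0)=X$, $X(s)\in\fP$. Then $g=(k\,k'(s))\exp(X(s))(h(s)h)$ is again a factorisation of the required type, so $s\mapsto|X(s)|^{2}$ is minimal at $s=0$ and $\langle X,\dot X(0)\rangle=0$. Differentiating $\exp(X)h(s)^{-1}=k'(s)\exp(X(s))$ at $s=0$ and multiplying on the right by $\exp(-X)$ gives
\[
-\,\mathrm{Ad}(\exp X)\,B\;=\;C\;+\;\psi(\mathrm{ad}\,X)\,\dot X(0),\qquad C=\dot k'(0)\in\fK,\quad \psi(z)=\tfrac{e^{z}-1}{z}.
\]
For $X\in\fP$ a short trace computation with $X^{t}=X$ shows $\mathrm{ad}\,X$ is self adjoint for $\langle\,,\rangle$, hence so are $\mathrm{Ad}(\exp X)=e^{\mathrm{ad}X}$ and $\psi(\mathrm{ad}X)$. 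Pairing the display with $X$ and using $\langle\fK,\fP\rangle=0$ (so $\langle C,X\rangle=0$, cf.\ (4.2)), $\mathrm{ad}\,X\,(X)=0$ (so $\mathrm{Ad}(\exp X)X=X$ and $\psi(\mathrm{ad}X)X=X$), and $\langle X,\dot X(0)\rangle=0$ collapses it to $\langle B,X\rangle=0$. As $B\in\fG_{v}$ was arbitrary and $X\in\fP$ is already orthogonal to $\fK$, we get $X\perp\fK+\fG_{v}$, i.e.\ $X\in\widetilde{\fP_{v}}$, completing the proof.

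The main obstacle I expect is Step 1: producing a factorisation whose $\fP$-part has length exactly $d_{R}(g,K\cdot G_{v})$ is what forces the use of completeness of $(G,d_{R})$, closedness of $K\cdot G_{v}$, and the metric content of (5.4) (not merely the algebraic KP factorisation). Step 2 is conceptually forced, since $\widetilde{\fP_{v}}$ is precisely the normal space to $K\cdot G_{v}$ at $I$, but it needs care in computing $\tfrac{d}{ds}\exp(X(s))$ and in using the self adjointness of $\mathrm{ad}\,X$ on $\fP$.
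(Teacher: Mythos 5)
Your proposal is correct, but it takes a genuinely different route from the paper, and the comparison is worth spelling out. The paper's proof is geometric throughout: after picking a closest point $k_{0}h_{0}\in K\cdot G_{v}$ to $g$, it replaces $g$ by $g'=k_{0}^{-1}g h_{0}^{-1}$ (so $I$ is the closest point of $K\cdot G_{v}$ to $g'$), takes a minimizing $d_{R}$-geodesic $\gamma$ from $I$ to $g'$, and reads off $X=\gamma'(0)\perp\fK+\fG_{v}$ directly from the first-variation / normal-geodesic fact (Proposition 1.5 of [CE]); the identity $g'=\exp(X)$ then follows because $\exp(sX)$ is a geodesic with the same initial data (3) of (4.5)). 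Your proof reaches the closest point in the same way but then never touches a geodesic: you invoke the KP decomposition (5.4) on $gh_{0}^{-1}$ to produce some factorisation attaining $d_{R}(g,K\cdot G_{v})$, minimize $|X|$ over all factorisations $g=k\exp(X)h$, and extract the orthogonality $X\perp\fG_{v}$ from the first-order condition by differentiating the KP decomposition of $\exp(X)\exp(-sB)$ and exploiting that $\mathrm{ad}\,X$, $e^{\mathrm{ad}X}$, and $\psi(\mathrm{ad}X)$ are all $\langle\,,\rangle$-self-adjoint when $X^{t}=X$. The paper's version buys conceptual economy (orthogonality is automatic once you phrase it in terms of a minimizing geodesic meeting a closed submanifold); your version buys independence from that Riemannian fact, at the cost of the $d\exp$ formula and of (implicitly) the smoothness of the KP map $g\mapsto(k,X)$, which you use when you differentiate $s\mapsto X(s)$.

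Two small points you should tighten. First, Proposition (5.4) as stated gives $|X|=d_{L}(g,K)$, not $d_{R}(g,K)$; you use the $d_{R}$ version both in the opening paragraph and when you claim $|Y|=d_{R}(gh_{0}^{-1},K)$. The identity $d_{R}(g,K)=d_{L}(g,K)$ is true, but it needs a line: by (4.1), $d_{R}(g,K)=d_{L}(g^{t},K^{t})=d_{L}(g^{t},K)$, and if $g=k\exp(X)$ then $g^{t}=\exp(X)k^{-1}$ has KP decomposition with $\fP$-part $kXk^{-1}$, whose norm equals $|X|$ by $\mathrm{Ad}(K)$-invariance of $\langle\,,\rangle$; so $d_{R}(g,K)=|kXk^{-1}|=|X|=d_{L}(g,K)$. (Alternatively, $d_{R}(\exp(X),I)=|X|$ comes from (5.5) together with the remark after it, not from (5.4).) Second, when you pass from ``$s\mapsto |X(s)|^{2}$ is minimal at $s=0$'' to $\langle X,\dot X(0)\rangle=0$ you are implicitly assuming $X(s)$ is $C^{1}$; this is fine because the polar factor of $g$ is the positive-definite square root of $g^{t}g$, which depends smoothly on $g$, and $\log$ is smooth on positive-definite matrices, but it is worth saying.
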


	  From (2.1) we obtain the following consequence (cf. (10.2)).	
 		
\begin{proposition}  Let G be a closed, self adjoint, noncompact subgroup of $GL(n,\br)$ with finitely many connected components, and let $G_{0}$ denote the connected component that contains the identity.   Let v be a nonzero vector of $\br^{n}$ such that the orbit G(v) is unbounded.  Then the following assertions are equivalent.

 	1)  G(v) is closed in $\br^{n}$.
	
	2)  $\lambda^{-}(w) > 0$ for some w $\in$ $G_{0}$(v), where $\lambda^{-} : \br^{n} \rightarrow \br$ is defined by the connected group $G_{0}$.
	
	3)  G$_{0}$(v) is closed in $\br^{n}$.
	
	4)  G(v) contains an element w that is minimal with respect to both G$_{0}$ and G.
\end{proposition}

$\mathit{Remarks}$

	1)  If G(v) is bounded, then G$_{0}$(v) is also bounded.  By 3) of (2.2) $G_{0}(v) = K(v)$, where $K = G_{0} \cap O(n,\br)$.  In particular every element of $G_{0}(v)$ is minimal with respect to $G_{0}$.§
	
	2)  The equivalence of 1) and 4) in (2.5) is Theorem 4.4 of [RS]), but the proof here is different and more elementary.  Although we are guided by the logical development of [RS], especially in section 6, the arguments used here are differential geometric in nature.  In addition they require some knowledge of Lie groups but no knowledge of algebraic groups or algebraic geometry.
\newline	

$\mathit{Comparison~ between~ \lambda^{-}(v)~ and~ the~ Hilbert-Mumford~function~ M(v)}$

	In [KN]  G. Kempf and L. Ness introduced a Hilbert- Mumford function M : V $\rightarrow \bc$ defined on the orbits of a reductive complex algebraic group G acting on a complex finite dimensional vector space V.  Later, A. Marian extended some of this work to the real setting in [M].  This function was studied further in [EJ], where it was shown, for example, that M(v) $< 0 \Leftrightarrow$ the orbit G(v) is closed and G$_{v}$ is compact.
\newline
	
	Let v be a nonzero element of $\br^{n}$, and let X be a nonzero element of $\fP$.  We define $\mu_{X}(v)$ to be the $\mathit{smallest}$ of the eigenvalues $\lambda$ of X such that v has a nonzero component in the eigenspace $V_{\lambda}$ corresponding to $\lambda$.   Define M(v) $= sup \{\mu_{X}(v) : X \in \fP, |X| = 1 \}$.
	
	From the definitions we obtain immediately that  $\lambda_{X}(v) = - \mu_{-X}(v)$ for all nonzero X in $\fP$ and all nonzero v $\in \br^{n}$.
\newline

	In (7.9) we show

\begin{proposition}  Let v be a nonzero vector of $\br^{n}$.  Then $\lambda^{-}(v) \geq - M(v)$.
\end{proposition}

$\mathit{Remark}$  The function M : V $\rightarrow \br$ is constant on G-orbits, but we are only able to prove in (7.3) that the function $\lambda^{-} : V \rightarrow \br$ is constant on K-orbits.  This is consistent with the fact that M(v) is defined by unit vectors in $\fP$ while $\lambda^{-}(v)$ is defined by unit vectors in $\widetilde{\fP_{v}}$.  There is no clear relation between $\widetilde{\fP_{v}}$ and  $\widetilde{\fP}_{g(v)}$ for g $\in$ G.

	However, the function  $\lambda^{-} : V \rightarrow \br$ is better at detecting closed orbits than  M : V $\rightarrow \br$.  By (2.5) an orbit G(v) is closed $\Leftrightarrow \lambda^{-}(w) > 0$ for some w $\in$ G(v).  In particular, by (7.6) $\lambda^{-}(v) > 0$ if G(v) is unbounded and v is minimal.  By contrast there may be nonclosed orbits G(v) on which M(v) $\equiv 0$ and closed orbits G(v) on which M(v) $\equiv 0$.

\section{Left and Right invariant geometry in Lie groups}

	Later we will consider primarily the canonical right invariant geometry of a closed subgroup G of $GL(n,\br)$, but the results of this section are valid for both left and right invariant Riemannian metrics on a connected Lie group G.    We omit the proofs of these results, which are well known.  Let e denote the identity of G.
		
	Let $\fG_{R}$ (respectively $\fG_{L}$ ) denote the real vector space of right invariant (respectively left invariant) vector fields on a connected Lie group G.  The vector space $\fG_{R}$ (respectively $\fG_{L}$ ) is closed under the usual Lie bracket of vector fields on G.  Note that $dim~\fG_{R}  = dim~\fG_{L} = dim~T_{e}G$ since the map $X \rightarrow X(e)$ is a linear isomorphism of $\fG_{R}$ ( respectively $\fG_{L}$ ) onto $T_{e}G$.
\newline
		
$\mathit{Properties~ of~ left~ and~right~invariant~ Riemannian~ metrics}$

	If $\langle , \rangle$ is an inner product on $T_{e}G$, then $\langle , \rangle$ extends uniquely to an inner product $\langle , \rangle_{g}$ on $T_{g}G$ for each g $\in$ G by defining $\langle X(g), Y(g) \rangle_{g} = \langle X(e) , Y(e) \rangle$ for all right invariant vector fields X,Y on G.  This also defines an inner product $\langle , \rangle$ on the vector space $\fG_{R}$ of right invariant vector fields on G.  Let $d_{R}$ denote the Riemannian metric on G determined by $\{\langle , \rangle_{g} : g \in G \}$.  The right translations $R_{g} : h \rightarrow hg$ are isometries of $(G, d_{R})$. Similarly, the left translations $L_{g} : h \rightarrow gh$ are isometries of $(G, d_{L})$ for all g $\in$ G.
	
\begin{proposition}  Let $\langle , \rangle$ denote a left or right invariant Riemannian structure on a connected Lie group G, and let d denote the corresponding Riemannian metric.  Then (G,d) is complete as a metric space.
\end{proposition}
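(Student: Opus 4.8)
The plan is to deduce completeness from the fact that $(G,d)$ is a \emph{homogeneous} Riemannian manifold, together with local compactness. In the right invariant case the right translations $R_g \colon h \mapsto hg$ are isometries of $(G,d_R)$, and given any two points $h_1, h_2 \in G$ the isometry $R_{h_1^{-1}h_2}$ carries $h_1$ to $h_2$; thus the isometry group acts transitively. The same holds in the left invariant case with the left translations $L_g$. So it suffices to prove the general statement that a connected homogeneous Riemannian manifold is complete as a metric space.

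First I would establish uniform local compactness. Since $\exp_e \colon T_eG \to G$ is a diffeomorphism on a neighborhood of $0$, for all sufficiently small $\epsilon > 0$ the closed metric ball $\overline{B}(e,\epsilon) = \{ x \in G : d(e,x) \le \epsilon \}$ is the $\exp_e$-image of a closed Euclidean ball in $T_eG$, hence compact. Fix such an $\epsilon$. For an arbitrary $g \in G$ choose an isometry $\phi$ of $(G,d)$ with $\phi(e) = g$ (a translation); since $\phi$ is a surjective isometry it maps $\overline{B}(e,\epsilon)$ onto $\overline{B}(g,\epsilon)$, so \emph{every} closed metric ball of radius $\epsilon$ in $G$ is compact.

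Now I would run the standard Cauchy sequence argument. Let $\{ g_k \}$ be a Cauchy sequence in $(G,d)$. Choose $N$ so that $d(g_k, g_N) < \epsilon$ for all $k \ge N$; then $\{ g_k \}_{k \ge N}$ lies in the compact set $\overline{B}(g_N, \epsilon)$, so it has a subsequence converging to some $g_\infty \in G$. A Cauchy sequence with a convergent subsequence converges to the same limit, so $g_k \to g_\infty$, and $(G,d)$ is complete. Alternatively one could invoke the Hopf--Rinow theorem: the uniform lower bound $\epsilon$ on the injectivity radius furnished by homogeneity lets one extend any geodesic indefinitely by repeatedly restarting it at its endpoint, so $(G,d)$ is geodesically complete and therefore metrically complete.

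The only point requiring care — and it is entirely standard — is the passage from the Riemannian data to the metric space: one must know that the length functional defines a genuine metric $d$ inducing the manifold topology, which is precisely what makes the small closed metric balls coincide with $\exp_e$-images of Euclidean balls and hence compact. Beyond this elementary local Riemannian geometry there is no substantive obstacle, since homogeneity upgrades the local picture to a global one; this is why the excerpt is content to state the result and omit the proof.
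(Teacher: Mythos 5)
The paper states this result without proof (``We omit the proofs of these results, which are well known''), so there is no in-paper argument to compare against; your proposal supplies the standard argument the paper is alluding to. Both of your routes are correct: homogeneity via translations, uniform local compactness from the exponential chart, and either the Cauchy-subsequence argument or the Hopf--Rinow detour through geodesic completeness. One small refinement worth making: for the Hopf--Rinow variant you do not actually need a uniform lower bound on the \emph{injectivity radius} (which concerns geodesics being minimizing); you only need a uniform lower bound on the \emph{time of existence} of geodesics, which follows from applying the ODE existence theorem on a compact neighborhood of $e$ and then using homogeneity to propagate the bound to all of $G$. That cleaner hypothesis still lets you extend any finite-time geodesic repeatedly and conclude geodesic, hence metric, completeness.
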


$\mathit{The~ Levi-Civita~ connection~ on~ \fG_{R}}$

	For a proof of the next result see for example Proposition 3.18 of [CE].

\begin{proposition}  Let $\langle , \rangle$ denote a left or right invariant Riemannian structure on a connected Lie group G, and let $\langle , \rangle$ also denote the corresponding inner product on $\fG_{R}$ or $\fG_{L}$ .  Let $\nabla$ denote the Levi-Civita connection on TG determined by $\langle , \rangle$.  Let X,Y $\in \fG_{R}$.  Then

\hspace{1in}	$\nabla_{X} Y = \frac{1}{2} \{[X,Y] - (ad~X)^{*}(Y) - (ad~Y)^{*}(X) \}$
\end{proposition}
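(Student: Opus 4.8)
The plan is to deduce the formula from the Koszul formula for the Levi--Civita connection, using the fact that the inner product of two right invariant vector fields is a \emph{constant} function on $G$.

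First I would record the Koszul formula: for arbitrary vector fields $X,Y,Z$ on $G$,
$$2\langle\nabla_{X}Y,Z\rangle = X\langle Y,Z\rangle + Y\langle Z,X\rangle - Z\langle X,Y\rangle + \langle[X,Y],Z\rangle - \langle[X,Z],Y\rangle - \langle[Y,Z],X\rangle.$$
Now take $X,Y,Z\in\fG_{R}$ and let $\langle,\rangle$ denote the given right invariant metric. Since the right translations $R_{g}$ are isometries of $(G,d_{R})$ and carry right invariant vector fields to themselves, each of $\langle Y,Z\rangle$, $\langle Z,X\rangle$, $\langle X,Y\rangle$ is constant on $G$; hence the first three terms above vanish, leaving
$$2\langle\nabla_{X}Y,Z\rangle = \langle[X,Y],Z\rangle - \langle[X,Z],Y\rangle - \langle[Y,Z],X\rangle.$$

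Next I would rewrite the last two terms using the definition of the metric adjoint on $\fG_{R}$: since $[X,Z]=(ad~X)(Z)$ and $[Y,Z]=(ad~Y)(Z)$ we have $\langle[X,Z],Y\rangle=\langle Z,(ad~X)^{*}(Y)\rangle$ and $\langle[Y,Z],X\rangle=\langle Z,(ad~Y)^{*}(X)\rangle$, so that
$$2\langle\nabla_{X}Y,Z\rangle = \bigl\langle\,[X,Y]-(ad~X)^{*}(Y)-(ad~Y)^{*}(X)\,,\,Z\,\bigr\rangle\qquad\text{for all }Z\in\fG_{R}.$$
Because the right translations are isometries they preserve the Levi--Civita connection, so $\nabla_{X}Y$ is again right invariant; the three terms inside the bracket on the right are right invariant as well (the bracket of right invariant fields is right invariant, and $(ad~X)^{*}$ maps $\fG_{R}$ into $\fG_{R}$). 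Since the values $Z(e)$, $Z\in\fG_{R}$, fill out $T_{e}G$, the last display forces the asserted identity $\nabla_{X}Y=\frac{1}{2}\{[X,Y]-(ad~X)^{*}(Y)-(ad~Y)^{*}(X)\}$. The same argument, with left translations in place of right ones, proves the statement for $\fG_{L}$ and $d_{L}$.

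There is no serious obstacle here; the proof is essentially bookkeeping. The two points deserving attention are (i) noting that right invariance of both the metric and the three vector fields annihilates the directional-derivative terms in the Koszul formula, and (ii) checking that $\nabla_{X}Y$ lies in $\fG_{R}$, so that it is enough to compare the two sides via the inner products $\langle\,\cdot\,,Z\rangle$, $Z\in\fG_{R}$ — and this is immediate from the fact that right translations are Riemannian isometries of $(G,d_{R})$.
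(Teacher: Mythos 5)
Your argument is correct and is the standard Koszul-formula derivation; the paper itself gives no proof here but simply cites Proposition~3.18 of [CE], which proceeds along exactly these lines. One small remark: you do not actually need the observation that $\nabla_{X}Y$ is right invariant — since $\langle\nabla_{X}Y,Z\rangle$ is tensorial in $Z$ and the values $Z(g)$, $Z\in\fG_{R}$, already span $T_{g}G$ at every $g\in G$, the pointwise argument closes the proof directly, with the right invariance of $\nabla_{X}Y$ (which is the content of Remark~2 following the proposition) dropping out as a consequence rather than being needed as input.
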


$\mathit{Remarks}$

	1)  (ad X)$^{*}$ and (ad Y)$^{*}$ denote the metric adjoints of the linear maps ad X, ad Y : $\fG_{R} \rightarrow \fG_{R}$ that are determined by the inner product $\langle , \rangle$ on $\fG_{R}$.  Define the metric adjoint (ad X)* analogously in $\fG_{L}$.
	
	2)  The assertion of the proposition also shows that $\nabla_{X} Y \in \fG_{R}$ (respectively $\fG_{L}$) if X,Y $\in \fG_{R}$ (respectively $\fG_{L}$).
	
\section{Left and right invariant geometry in $GL(n,\br)$}

	Let exp : $M(n,\br) \rightarrow GL(n, \br)$ be the matrix exponential map given by $exp(X) = \sum_{n=0}^{\infty}~X^{n} / n!$.  Let $\langle , \rangle$ denote the canonical positive definite inner product on $M(n,\br)$ given by $\langle A , B \rangle = trace(AB^{t}) \newline = \sum_{i,j=1}^{n}~A_{ij} B_{ij}$.  Note that $M(n,\br)$ is the Lie algebra of $GL(n,\br)$ with the bracket [ , ] given by $[A,B] = AB - BA$.
	
	Let G be a closed, connected self adjoint subgroup of $GL(n,\br)$ and let d$_{L}$ and d$_{R}$ denote the canonical left invariant and right invariant Riemannian metrics on G.
\newline
\vspace{.5in}

$\mathit{Left~and~right~invariant~vector~ fields~in~GL(n,\br)}$

	Let I denote the identity in $M(n,\br)$.  For A $\in M(n,\br)$ let $A_{I} \in T_{I} GL(n,\br)$ denote the initial velocity of the curve $\alpha(t) = I + tA$.  We identify $M(n,\br)$ with $T_{I} GL(n,\br)$ by means of the linear isomorphism $A \rightarrow A_{I}$, and we let  $\langle , \rangle$ also denote the canonical inner product on $T_{I}GL(n,\br)$.
	
	For A $\in M(n,\br)$ let $A_{R}$ denote the right invariant vector field on $GL(n,\br)$ that satisfies the initial condition  $A_{R}(I) = A_{I}$.  Define the left invariant vector field $A_{L}$ in similar fashion.
		
\begin{proposition}	If T : G $\rightarrow$ G is the diffeomorphism given by T(g) $=$ g$^{t}$, then $T : (G,d_{L}) \rightarrow (G,d_{R})$ is an isometry.
\end{proposition}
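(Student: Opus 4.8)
The plan is to show directly that $T$ is a Riemannian isometry from $(G,\langle\,,\rangle_{L})$ to $(G,\langle\,,\rangle_{R})$; since $T$ is visibly a diffeomorphism of $G$ — it is the restriction to $G$ of the linear involution $A\mapsto A^{t}$ of $M(n,\br)$, and $T(G)=G$ because $G$ is self adjoint, so $T\circ T=\mathrm{id}_{G}$ — it will then follow that $T\colon(G,d_{L})\to(G,d_{R})$ is an isometry of metric spaces. The point of departure is the behaviour of $T$ at the identity: since $T$ is the restriction of a linear map, $dT_{I}\colon T_{I}G\to T_{I}G$ is that same map, $dT_{I}(X)=X^{t}$, and it preserves $\fG$ because $T(G)=G$ (indeed $\fG=\fK\oplus\fP$ with $\fK$ skew symmetric and $\fP$ symmetric). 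The canonical inner product is transpose invariant, $\langle A^{t},B^{t}\rangle=trace(A^{t}B)=trace((AB^{t})^{t})=\langle A,B\rangle$, so $dT_{I}$ is a linear isometry of $(T_{I}G,\langle\,,\rangle)$. Note that $d_{R}$ and $d_{L}$ both restrict to $\langle\,,\rangle$ on $T_{I}G$.

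To pass to an arbitrary $g\in G$ I would use the intertwining identity $T\circ L_{g}=R_{g^{t}}\circ T$, valid on $G$ because $(gh)^{t}=h^{t}g^{t}$. Differentiating at the identity gives $dT_{g}\circ(dL_{g})_{I}=(dR_{g^{t}})_{I}\circ dT_{I}$, hence, since $(dL_{g})_{I}$ is invertible with inverse $(dL_{g^{-1}})_{g}$,
$$dT_{g}=(dR_{g^{t}})_{I}\circ dT_{I}\circ(dL_{g^{-1}})_{g}.$$
For $X,Y\in T_{g}G$, the definition of $d_{R}$ (right translations are isometries of $(G,d_{R})$ and the metric at $I$ is $\langle\,,\rangle$) gives
$$\langle dT_{g}X,\,dT_{g}Y\rangle_{R,\,g^{t}}=\big\langle(dR_{(g^{t})^{-1}})_{g^{t}}\,dT_{g}X,\ (dR_{(g^{t})^{-1}})_{g^{t}}\,dT_{g}Y\big\rangle,$$
and since $R_{(g^{t})^{-1}}\circ R_{g^{t}}=\mathrm{id}$ we have $(dR_{(g^{t})^{-1}})_{g^{t}}\circ(dR_{g^{t}})_{I}=\mathrm{id}$ on $T_{I}G$, so the right-hand side reduces to $\langle dT_{I}(dL_{g^{-1}})_{g}X,\ dT_{I}(dL_{g^{-1}})_{g}Y\rangle$. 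By the computation at the identity $dT_{I}$ is an isometry, so this equals $\langle(dL_{g^{-1}})_{g}X,\ (dL_{g^{-1}})_{g}Y\rangle$, which by the definition of $d_{L}$ is $\langle X,Y\rangle_{L,\,g}$. Thus $dT_{g}$ carries $\langle\,,\rangle_{L}$ to $\langle\,,\rangle_{R}$ at every $g$, which is exactly what is needed.

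I do not expect any genuine obstacle here: the argument is just the chain rule together with the defining properties of $d_{R}$ and $d_{L}$, and the only thing requiring attention is keeping the composition conventions consistent ($R_{a}R_{b}=R_{ba}$, $L_{a}L_{b}=L_{ab}$, and the corresponding inverses evaluated at the correct base points) so that the right and left invariant normalizations cancel in the right order. One could equally run the middle step through the companion identity $T\circ R_{g}=L_{g^{t}}\circ T$ and reach the same conclusion; completeness of $(G,d_{R})$ and $(G,d_{L})$ from (3.1) is not needed.
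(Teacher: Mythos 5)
Your proof is correct and follows essentially the same route as the paper's: both hinge on the intertwining identity $T\circ L_{g}=R_{g^{t}}\circ T$ together with the transpose-invariance of the canonical inner product at the identity, then invoke left/right invariance to transfer the isometry at $I$ to an arbitrary $g$. The paper phrases the computation in terms of norms $|\zeta|_{L}=|T_{*}(\zeta)|_{R}$ while you spell it out with inner products and explicit differentials, but this is only a difference of notation, not of method.
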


\begin{proof}
	Observe that if $\eta_{I} \in T_{I}G$, then $T_{*}(\eta_{I}) = (\eta^{t})_{I}$, and it follows that $|\eta_{I}|_{L} = |T_{*}(\eta_{I})|_{R} = (trace~\eta~ \eta^{t})^{\frac{1}{2}}$.  In general if $\zeta \in T_{g}G$ for some g $\in$ G, then $\zeta = (L_{g})_{*}(\eta_{I})$ for a unique $\eta_{I} \in T_{I}G$.  Hence $|\zeta|_{L} = |\eta_{I}|_{L} = |T_{*}(\eta_{I})|_{R} = |T_{*}(\zeta)|_{R}$ since $T_{*}(\zeta) = (T \circ L_{g})_{*}(\eta_{I}) = (R_{g^{t}} \circ T)_{*}(\eta_{I}) = (R_{g^{t}})_{*} (T_{*}(\eta_{I}))$.
\end{proof}
		
	The next result follows from routine computations, which we omit.
	
\begin{proposition}  Let $\langle , \rangle$ denote the positive definite inner product on $M(n,\br)$ defined above.  Then

	1)  $\langle , \rangle$ is invariant under Ad k for all k $\in O(n,\br)$, where Ad k(X) = kXk$^{-1}$ for X $\in M(n,\br)$.
	
	2)  $\langle X , Y \rangle = 0$ if X,Y $\in M(n,\br)$ are elements with $X^{t} = - X$ and $Y^{t} = Y$.
	
	3)  Let (ad A)$^{*}$ denote the metric adjoint of ad A : $M(n,\br) \rightarrow M(n,\br)$ relative to the inner product $\langle , \rangle$.  Then (ad A)$^{*} =$ ad A$^{t}$ for all A $\in M(n,\br)$.
\end{proposition}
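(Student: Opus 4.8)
The approach is to derive all three statements from two elementary properties of the trace on $M(n,\br)$: cyclic invariance, $trace(PQ) = trace(QP)$, and invariance under transpose, $trace(P) = trace(P^{t})$. Combining these already shows that $\langle , \rangle$ is symmetric, since $\langle P, Q \rangle = trace(PQ^{t}) = trace((PQ^{t})^{t}) = trace(QP^{t}) = \langle Q, P \rangle$. Nothing deeper than this is needed, so the only labor is careful bookkeeping of transposes.

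For assertion 1), I would fix $k \in O(n,\br)$ and use $k^{t} = k^{-1}$ to compute $(kYk^{-1})^{t} = (k^{-1})^{t}Y^{t}k^{t} = kY^{t}k^{-1}$; then
\[
\langle Ad\,k(X), Ad\,k(Y) \rangle = trace\bigl(kXk^{-1} \cdot kY^{t}k^{-1}\bigr) = trace\bigl(k(XY^{t})k^{-1}\bigr) = trace(XY^{t}) = \langle X, Y \rangle ,
\]
the third equality being cyclic invariance. For assertion 2), if $X^{t} = -X$ and $Y^{t} = Y$ then $\langle X, Y \rangle = trace(XY)$, while transpose invariance gives $trace(XY) = trace((XY)^{t}) = trace(Y^{t}X^{t}) = trace\bigl(Y(-X)\bigr) = -trace(YX) = -trace(XY)$ by cyclicity, forcing $trace(XY) = 0$.

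For assertion 3), the cleanest route is a direct verification that $\langle [A,X], Y \rangle = \langle X, [A^{t},Y] \rangle$ for all $X, Y \in M(n,\br)$: expanding the left side gives $trace(AXY^{t}) - trace(XAY^{t})$, while $\langle X, [A^{t},Y] \rangle = trace\bigl(X(A^{t}Y - YA^{t})^{t}\bigr) = trace(XY^{t}A) - trace(XAY^{t})$, and $trace(XY^{t}A) = trace(AXY^{t})$ by cyclicity, so the two sides coincide. An alternative is to split $A = \tfrac{1}{2}(A - A^{t}) + \tfrac{1}{2}(A + A^{t})$: for the skew part $B$, $\exp(tB) \in O(n,\br)$, so differentiating the $Ad$-invariance of assertion 1) at $t = 0$ yields $(ad\,B)^{*} = -ad\,B = ad\,B^{t}$; for the symmetric part $C$ one still needs the short direct check that $ad\,C$ is self adjoint, since $\langle , \rangle$ is not invariant under $Ad$ of all of $GL(n,\br)$ — only under $Ad\,O(n,\br)$ — and this is precisely why assertion 3) cannot be obtained purely by differentiating a global invariance. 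In short there is no genuine obstacle here; the only thing to watch is applying $k^{-1} = k^{t}$, $(A^{t})^{t} = A$, and $(PQ)^{t} = Q^{t}P^{t}$ consistently.
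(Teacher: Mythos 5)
Your proof is correct, and the computations you give are precisely the ``routine computations'' that the paper explicitly omits (it states the result and remarks ``The next result follows from routine computations, which we omit''). Your direct verification that $\langle [A,X],Y\rangle = \langle X,[A^{t},Y]\rangle$ via cyclicity and transpose-invariance of the trace is the standard way to establish assertion 3), and your parenthetical warning about the alternative route --- that one cannot simply differentiate a global $Ad$-invariance because $\langle\,,\,\rangle$ is only $Ad\,O(n,\br)$-invariant, so the symmetric part of $A$ still requires a direct trace check --- is an accurate and worthwhile observation.
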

\vspace{.1in}
	
\begin{proposition}  For A,B $\in M(n,\br)$ let $[A,B] = AB - BA$ and let $[A_{L},B_{L}]$ and $[A_{R},B_{R}]$ denote the usual Lie brackets of vector fields in $GL(n,\br)$.  Then

	1)  $[A,B]_{R} = - [A_{R}, B_{R}]$.
	
	2)  $[A,B]_{L} =  [A_{L}, B_{L}]$.
\end{proposition}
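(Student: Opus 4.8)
The plan is to realize both invariant vector fields as explicit $M(n,\br)$-valued functions on the open subset $GL(n,\br)\subset M(n,\br)$ and then apply the coordinate formula for the Lie bracket of vector fields on an open set in a vector space. No machinery beyond that is needed.

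First I would record the basic identification. Since $GL(n,\br)$ is open in $M(n,\br)$, for each $g$ the map sending $A\in M(n,\br)$ to the initial velocity $A_{g}$ of $t\mapsto g+tA$ is a linear isomorphism of $M(n,\br)$ onto $T_{g}GL(n,\br)$, consistent with the identification at $I$ used in the excerpt. Under these identifications one reads off from the definitions that $A_{L}(g)=gA$ and $A_{R}(g)=Ag$: indeed $A_{L}(g)=(L_{g})_{*}A_{I}$ is the velocity at $t=0$ of $L_{g}(I+tA)=g+t(gA)$, and likewise $A_{R}(g)=(R_{g})_{*}A_{I}$ is the velocity at $t=0$ of $R_{g}(I+tA)=g+t(Ag)$.

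Next I would recall that for vector fields $V,W$ on an open subset $U$ of a finite dimensional real vector space, viewed as smooth maps $U\to M(n,\br)$, the Lie bracket is $[V,W](g)=DW_{g}(V(g))-DV_{g}(W(g))$. Since $g\mapsto gB$ and $g\mapsto Bg$ are linear in $g$, their differentials at every point equal themselves. Hence $D(B_{L})_{g}(A_{L}(g))=D(B_{L})_{g}(gA)=(gA)B=gAB$ and $D(A_{L})_{g}(B_{L}(g))=gBA$, so $[A_{L},B_{L}](g)=g(AB-BA)=g[A,B]=[A,B]_{L}(g)$, which is assertion 2). For the right invariant fields the same computation gives $D(B_{R})_{g}(A_{R}(g))=B(Ag)=BAg$ and $D(A_{R})_{g}(B_{R}(g))=ABg$, so $[A_{R},B_{R}](g)=(BA-AB)g=-[A,B]g=-[A,B]_{R}(g)$, which is assertion 1).

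There is no genuine obstacle here; the only care required is to keep the left/right identifications and the order of matrix multiplication straight, and it is precisely the order reversal $A_{R}(g)=Ag$ (versus $A_{L}(g)=gA$) that produces the extra minus sign in the right invariant case. As an alternative one could simply invoke the standard fact that on any Lie group $X\mapsto X_{L}$ is a Lie algebra homomorphism and $X\mapsto X_{R}$ an anti-homomorphism onto the respective spaces of invariant vector fields, but the direct matrix computation above is shorter, self-contained, and fixes the sign conventions used in the rest of the paper.
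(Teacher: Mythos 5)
Your proof is correct, and it is essentially the same computation as the paper's: both identify $A_{R}(g)=Ag$ and $A_{L}(g)=gA$ (the paper records this as $A_{R}(x_{ij})=\sum_{k}A_{ik}x_{kj}$, $A_{L}(x_{ij})=\sum_{k}x_{ik}A_{kj}$) and then evaluate the bracket directly using the flat structure of $GL(n,\br)$ as an open subset of $M(n,\br)$. The paper phrases the bracket as a second-order derivation on the coordinate functions $x_{ij}$, while you use the equivalent matrix-valued formula $[V,W]=DW\cdot V-DV\cdot W$; the content and the sign bookkeeping are the same.
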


\begin{proof}  Let $\{x_{ij} : 1 \leq i,j \leq n \}$ denote the standard coordinate functions of $M(n,\br)$, restricted now to $GL(n,\br)$.  If $A = (A_{ij}) \in M(n,\br)$, then routine computations show that $A_{R}(x_{ij}) = \sum_{k=1}^{n} A_{ik} x_{kj}$ and $A_{L}(x_{ij}) = \sum_{k=1}^{n} x_{ik} A_{kj}$.  The statements 1) and 2) now follow immediately since both sides of the equality assertions have the same values on the coordinate functions $\{x_{ij} \}$
\end{proof}

$\mathit{The~Lie~algebra~of~G~in~M(n,\br)}$

	Let $\fG = \{A \in M(n,\br) : A_{I} \in T_{I}G \}$. One calls $\fG$ the $\mathit{Lie~ algebra~ of~ G}$ in $M(n,\br)$.
		
\begin{proposition}  Let G be a closed, connected subgroup of $GL(n,\br)$, and let $\fG$ denote the Lie algebra of G in $M(n,\br)$.  Let A,B $\in \fG$.  Then
	
	1)  The vector fields $A_{R}$ and $A_{L}$ are tangent to G at every point of G.
	
	2)  $exp(tA) \in$ G for all t $\in \br$ and $t \rightarrow exp(tA)$ is the integral curve starting at I for the vector fields $A_{R}$ and $A_{L}$ restricted to G.
	
	3)  $[A,B] = AB - BA \in \fG$.
\end{proposition}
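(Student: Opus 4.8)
The plan is to derive all three assertions from the behaviour of the invariant vector fields $A_{R}$, $A_{L}$ under the left and right translations of $GL(n,\br)$ that preserve $G$, together with the fact recalled in Section 1 that $G$, being a closed subgroup, is an embedded Lie subgroup of $GL(n,\br)$. The coordinate formulas $A_{R}(x_{ij}) = \sum_{k} A_{ik} x_{kj}$ and $A_{L}(x_{ij}) = \sum_{k} x_{ik} A_{kj}$ established in the proof of (4.3) will be used to identify $A_{R}(g)$ with $Ag$ and $A_{L}(g)$ with $gA$ under the canonical identification $T_{g}GL(n,\br) \cong M(n,\br)$.

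For 1), fix $g \in G$. The right translation $R_{g} : h \rightarrow hg$ is a diffeomorphism of $GL(n,\br)$ carrying $G$ onto $G$ and $I$ to $g$, so $(R_{g})_{*} : T_{I}GL(n,\br) \rightarrow T_{g}GL(n,\br)$ restricts to a linear isomorphism of $T_{I}G$ onto $T_{g}G$. Since $A_{R}$ is right invariant, $A_{R}(g) = (R_{g})_{*}(A_{I})$, and $A_{I} \in T_{I}G$ by the definition of $\fG$ in Section 4; hence $A_{R}(g) \in T_{g}G$. The argument for $A_{L}$ is identical, using the left translations $L_{g} : h \rightarrow gh$ in place of the $R_{g}$.

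For 2), observe first that since $\frac{d}{dt} exp(tA) = A\, exp(tA) = exp(tA)\, A$, the identification above shows that $\alpha(t) = exp(tA)$ is the integral curve through $I$ of both $A_{R}$ and $A_{L}$ as vector fields on $GL(n,\br)$. It remains to show $exp(tA) \in G$ for all $t$; the integral-curve statement for $A_{R}\lvert_{G}$ and $A_{L}\lvert_{G}$ then follows. By 1), $A_{R}\lvert_{G}$ is a smooth vector field on the embedded submanifold $G$, so it has an integral curve $\beta$ in $G$ with $\beta(0) = I$, defined on some interval $(-\varepsilon,\varepsilon)$; this $\beta$ is also an integral curve of $A_{R}$ in $GL(n,\br)$ through $I$, so by uniqueness $\beta(t) = exp(tA)$ for $\lvert t\rvert < \varepsilon$. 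Thus $S = \{t \in \br : exp(tA) \in G\}$ contains $(-\varepsilon,\varepsilon)$; since $t \rightarrow exp(tA)$ is a one-parameter subgroup and $G$ is a subgroup, $S$ is a subgroup of $(\br,+)$, and a subgroup of $\br$ containing an interval around $0$ is all of $\br$.

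For 3), I would invoke the standard fact that the Lie bracket of two vector fields tangent to an embedded submanifold is again tangent to it: by 1), $A_{R}$ and $B_{R}$ are tangent to $G$, hence so is $[A_{R},B_{R}]$, and in particular $[A_{R},B_{R}](I) \in T_{I}G$. By (4.3), $[A_{R},B_{R}] = -[A,B]_{R}$, so $[A,B]_{I} = -[A_{R},B_{R}](I) \in T_{I}G$, which is precisely the assertion $[A,B] \in \fG$. The step requiring the most care is 2): it rests on $G$ being an embedded (not merely immersed) submanifold, so that $A_{R}\lvert_{G}$ genuinely admits integral curves inside $G$, and on uniqueness of integral curves to identify the curve in $G$ with $exp(tA)$ near $0$; the extension to all of $\br$ then uses only that $G$ is a subgroup. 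Parts 1) and 3) are short formal consequences once the identifications are in place.
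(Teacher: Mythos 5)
Your argument is correct and consistent with the paper's (very terse) indication of proof. The paper dismisses 1) and 2) as "routine arguments that we omit," and you supply exactly the standard ones: for 1), right/left translations by elements of $G$ are diffeomorphisms of $GL(n,\br)$ preserving $G$, so invariance of $A_{R}$, $A_{L}$ transports $A_{I}\in T_{I}G$ to every $T_{g}G$; for 2), the local integral curve of $A_{R}\lvert_{G}$ inside the embedded submanifold $G$ agrees with $exp(tA)$ near $0$ by uniqueness, and the subgroup-of-$\br$ argument extends containment to all $t$. Your proof of 3) (tangency of the bracket of tangent vector fields, then $[A_{R},B_{R}]=-[A,B]_{R}$ from (4.3)) is precisely what the paper means by "Assertion 3) follows from 1) and (4.3)." One minor remark worth being aware of: the paper gives two a priori different descriptions of $\fG$ (the $exp$-based one in Section 1 and the $T_{I}G$-based one in Section 4); your part 2) is in effect the nontrivial direction of their equivalence, and you correctly anchor your proof in the Section 4 description so that no circularity arises.
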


\begin{proof}  Assertions 1) and 2) follow from routine arguments that we omit.  Assertion 3) follows from 1) and (4.3).
\end{proof}

\begin{corollary}  Let G be a closed, connected, self adjoint, noncompact subgroup of $GL(n,\br)$.   Let $\fG$ denote the Lie algebra of G, and let  $d_{R}$ denote the canonical right invariant Riemannian metric on G.  Then

	1)  $\fG = \fK \oplus \fP$, orthogonal direct sum, where $\fK = \{X \in \fG : X^{t} = - X \}$ and $\fP = \{X \in \fG : X^{t} = X \}$
	
	2)  If A $\in \fG$, then $(ad~A_{R})^{*} = (ad~A^{t}_{R})$.
	
	3)  If A $\in \fG$, then $\gamma(t) = exp(tA)$ is a geodesic of $(G, d_{R}) \Leftrightarrow AA^{t} = A^{t}A$.

	4)  If K $= G~\cap~O(n,\br)$, then the left translation $L_{k}$ is an isometry of $(G, d_{R})$ for all k $\in$ K.
\end{corollary}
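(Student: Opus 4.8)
The plan is to deduce all four assertions from results already available: the adjoint and orthogonality identities in (4.2), the bracket identities in (4.4), the facts in (4.5) that $\fG$ is closed under bracket and that $t \mapsto \exp(tA)$ is the integral curve of $A_{R}$, and the Levi-Civita formula (3.2).

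For 1), the key remark is that self adjointness of $G$ makes $\fG$ stable under transpose: if $A \in \fG$ then $\exp(tA)^{t} = \exp(tA^{t}) \in G$ for all $t$, so $A^{t} \in \fG$. Then each $A \in \fG$ is the sum of its symmetric part $\frac{1}{2}(A + A^{t}) \in \fP$ and its skew part $\frac{1}{2}(A - A^{t}) \in \fK$, while $\fK \cap \fP = \{0\}$; orthogonality of the sum is 2) of (4.2). For 2), I would check the defining property of the metric adjoint directly. Using the definition $\langle B_{R}, C_{R}\rangle = \langle B, C\rangle$ of the inner product on $\fG_{R}$, and replacing via 1) of (4.4) the brackets $[A_{R},B_{R}]$ by $-[A,B]_{R}$ and $[A^{t}_{R},C_{R}]$ by $-[A^{t},C]_{R}$ (both legitimate right invariant vector fields on $G$, since $[A,B]$ and $[A^{t},C]$ lie in $\fG$ by 3) of (4.5) together with part 1)), the identity $\langle (\mathrm{ad}\,A_{R})(B_{R}), C_{R}\rangle = \langle B_{R}, (\mathrm{ad}\,A^{t}_{R})(C_{R})\rangle$ reduces to $\langle [A,B], C\rangle = \langle B, [A^{t},C]\rangle$, which is a restatement of 3) of (4.2).

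For 3), by 2) of (4.5) the curve $\gamma(t) = \exp(tA)$ has velocity field $A_{R}$, so $\gamma$ is a geodesic of $(G,d_{R})$ iff $\nabla_{A_{R}}A_{R}$ vanishes along $\gamma$. Applying (3.2) with $X = Y = A_{R}$ and using $[A_{R},A_{R}] = 0$ gives $\nabla_{A_{R}}A_{R} = -(\mathrm{ad}\,A_{R})^{*}(A_{R})$, which by part 2) and 1) of (4.4) equals $-[A^{t}_{R},A_{R}] = [A^{t},A]_{R}$. Since a right invariant vector field on $G$ vanishing at one point vanishes everywhere, $\nabla_{A_{R}}A_{R}$ vanishes along $\gamma$ iff it vanishes at the identity, i.e. iff $[A^{t},A] = 0$, i.e. iff $AA^{t} = A^{t}A$.

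For 4), I would show that $(L_{k})_{*}$ preserves the right invariant inner product on $TG$. The crucial identity is $L_{k} \circ R_{g} = R_{kg} \circ c_{k}$, where $c_{k}$ denotes conjugation by $k$; differentiating at the identity yields $(L_{k})_{*}\zeta_{R} = (\mathrm{Ad}\,k\,\zeta)_{R}$ for every $\zeta \in \fG$, noting that $\mathrm{Ad}\,k\,\zeta = k\zeta k^{-1} \in \fG$ because $k \in G$. Hence $\langle (L_{k})_{*}\zeta_{R}, (L_{k})_{*}\eta_{R}\rangle = \langle \mathrm{Ad}\,k\,\zeta, \mathrm{Ad}\,k\,\eta\rangle = \langle \zeta, \eta\rangle = \langle \zeta_{R},\eta_{R}\rangle$, where the middle equality is 1) of (4.2) and uses $k \in O(n,\br)$. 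None of the four steps is genuinely hard; the point needing the most care is 4), where both hypotheses on $k$ enter for different reasons (membership in $G$ so that $L_{k}$ preserves $G$ and $\mathrm{Ad}\,k$ preserves $\fG$; membership in $O(n,\br)$ so that $\mathrm{Ad}\,k$ is an isometry of $(\fG,\langle,\rangle)$), and one must observe that because $\fG$ is $\mathrm{Ad}\,k$-stable the computation carried out in $GL(n,\br)$ restricts correctly to $G$.
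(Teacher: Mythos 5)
Your proof is correct and follows essentially the same route as the paper's: transpose-stability of $\fG$ plus the symmetric/skew splitting for 1), the identities $\langle A_{R},B_{R}\rangle = \langle A,B\rangle$, $[A,B]_{R} = -[A_{R},B_{R}]$, and $(\mathrm{ad}\,A)^{*} = \mathrm{ad}\,A^{t}$ for 2), the Levi-Civita formula (3.2) together with right-invariance for 3), and the $\mathrm{Ad}\,K$-invariance of $\langle,\rangle$ from 1) of (4.2) for 4). The only correction needed is a consistent off-by-one slip in your citations: the bracket identities you attribute to (4.4) are in fact (4.3), and the facts that $\fG$ is closed under bracket and that $t \mapsto \exp(tA)$ is the integral curve of $A_{R}$ are in (4.4), not (4.5), which is the Corollary you are proving.
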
	

$\mathit{Remark}$  The statements corresponding to 2), 3) and 4) for the canonical left invariant Riemannian metric $d_{L}$ on G are also true, and the proofs are essentially the same.

\begin{proof}  1)  It follows from (4.4) that $\fG^{t} = \fG$ since $G^{t} = G$.  If $X \in \fG$, then $X = K + P \in \fK \oplus \fP$, where $K = \frac{1}{2} (X - X^{t})$ and $P = \frac{1}{2} (X + X^{t})$.  The subspaces $\fK$ and $\fP$ are orthogonal by 2) of (4.2).

	2)  This follows immediately from 3) of (4.2), (4.3) and the fact that $\langle A_{R} , B_{R} \rangle = \langle A , B \rangle$ for all A,B $\in M(n,\br)$.
	
	3)  For A $\in \fG$ the curve $\gamma(t) = exp(tA)$ is an integral curve of the vector field $A_{R}$ on G by (4.4), and it follows that $\gamma$ is a geodesic of G $\Leftrightarrow \nabla_{A_{R}} A_{R} \equiv 0$.  It follows that $\nabla_{A_{R}} A_{R} = - (ad~A_{R})^{*}(A_{R})= - ad~A_{R}^{t}(A_{R})= [A^{t}, A]$ by (3.2), assertion 2) above and 1) of (4.3).  Assertion 3) now follows immediately.
	
	4)  For k $\in$ K, A $\in M(n,\br)$ and C $\in GL(n,\br)$ it is easy to show that $(L_{k})_{*}A_{R}(C) = \newline (k A k^{-1})_{R}(kC)$.  Hence for k $\in$ K, A,B $\in M(n,\br)$ and C $\in GL(n,\br)$ we have \newline $\langle (L_{k})_{*} A_{R}(C),(L_{k})_{*} B_{R}(C) \rangle = \langle (k A k^{-1})_{R}(kC), (k B k^{-1})_{R}(kC) \rangle = \langle k A k^{-1},k B k^{-1} \rangle = \newline \langle A , B \rangle = \langle A_{R}(C) , B_{R}(C) \rangle$.  We use 1) of (4.2) and the fact that $\langle A_{R} , B_{R} \rangle \equiv \langle A , B \rangle$ on $GL(n,\br)$.
\end{proof}

\section{Self adjoint subgroups of $GL(n,\br)$ and the KP decomposition}

	In this section let G be a closed, connected subgroup of $GL(n,\br)$.  The group G is a Lie group in the subspace topology of $GL(n,\br)$.
\newline
	
$\mathit{Structure~of~self~adjoint~subalgebras~of~ M(n,\br)}$

	A subalgebra $\fG$ of $M(n,\br)$ is $\mathit{self~ adjoint}$ if $X^{t} \in \fG$ whenever $X \in \fG$.  If G is a closed, connected  subgroup of $GL(n,\br)$, then it follows from (4.4) that G is self adjoint $\Leftrightarrow \fG$ is self adjoint.	 A subalgebra $\fG$ of $M(n,\br)$ is $\mathit{semisimple}$ if the Killing form B : $\fG \times \fG \rightarrow \br$ given by B(X,Y) $=$ trace ad X $\circ$ ad Y is nondegenerate.

\begin{proposition}  Let $\fG$ be a self adjoint Lie subalgebra of $M(n,\br)$.  Let $\fZ(\fG)$ denote the center of $\fG$, and let $\fG_{0}$ denote the orthogonal complement of $\fZ(\fG)$ in $\fG$ relative to the canonical inner product $\langle , \rangle$.  Then

	1)  $\fG = \fZ(\fG) \oplus \fG_{0}$.
	
	2)  $\fZ(\fG)$ and $\fG_{0}$ are self adjoint ideals of $\fG$.
	
	3)  $\fG_{0}$ is semisimple.
\end{proposition}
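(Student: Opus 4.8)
The plan is to prove the three assertions in order; the first two are routine and essentially all the work is in the third.

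For assertion 1 there is nothing to do beyond the definition: $\langle\,,\rangle$ restricted to $\fG$ is positive definite, so $\fG$ is the orthogonal direct sum of $\fZ(\fG)$ and its orthogonal complement $\fG_0$. For assertion 2, I would first observe that $\fZ(\fG)$ is self adjoint by a one-line computation: if $X \in \fZ(\fG)$ and $Y \in \fG$, then $[X^{t}, Y]^{t} = -[X, Y^{t}] = 0$, since $Y^{t} \in \fG$ (self adjointness of $\fG$) and $X$ is central; hence $X^{t} \in \fZ(\fG)$. Being the center, $\fZ(\fG)$ is an ideal. Transpose $A \mapsto A^{t}$ is an isometry of $(M(n,\br),\langle\,,\rangle)$ carrying $\fG$ to $\fG$ and $\fZ(\fG)$ to $\fZ(\fG)$, hence it carries $\fG_0$ to $\fG_0$, so $\fG_0$ is self adjoint. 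That $\fG_0$ is an ideal follows from $(\operatorname{ad} Y)^{*} = \operatorname{ad} Y^{t}$ on $\fG$, which holds by (4.2) because $\fG$ is invariant under both $\operatorname{ad} Y$ and $\operatorname{ad} Y^{t}$: for $Z \in \fG_0$, $W \in \fZ(\fG)$, $Y \in \fG$ one gets $\langle [Y, Z], W\rangle = \langle Z, [Y^{t}, W]\rangle = 0$ since $W$ is central, so $[Y, Z] \in \fG_0$. Since $\fZ(\fG)$ and $\fG_0$ are ideals meeting only in $0$, the decomposition $\fG = \fZ(\fG) \oplus \fG_0$ is also a direct sum of Lie algebras.

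For assertion 3 the point is to show $\operatorname{rad}(\fG) = \fZ(\fG)$. Granting this, the projection $\fG \to \fG/\fZ(\fG)$ restricts to a Lie algebra isomorphism $\fG_0 \cong \fG/\fZ(\fG)$ (as $\fG_0$ is an ideal complementary to $\fZ(\fG)$), and $\fG/\fZ(\fG)$ has trivial radical, hence nondegenerate Killing form by Cartan's criterion, so $\fG_0$ is semisimple. The inclusion $\fZ(\fG) \subseteq \operatorname{rad}(\fG)$ is clear. For the reverse, set $\mathfrak{r} = \operatorname{rad}(\fG)$, a characteristic ideal, hence invariant under the automorphism $X \mapsto -X^{t}$ of $\fG$; thus $\mathfrak{r}$ is self adjoint. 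I would then use two elementary facts about a self adjoint subspace $\mathfrak{m} \subseteq M(n,\br)$, which splits as the sum of its antisymmetric part $\mathfrak{m} \cap \fs\fo(n,\br)$ and its symmetric part: (a) if every element of $\mathfrak{m}$ is nilpotent then $\mathfrak{m} = 0$, since a nilpotent matrix that is symmetric or antisymmetric (hence normal) vanishes; and (b) if $\mathfrak{m}$ is also an abelian subalgebra, its elements form a commuting family of normal matrices and so are simultaneously diagonalizable over $\bc$. Applying (a) to $[\mathfrak{r}, \mathfrak{r}]$, which is self adjoint and consists of nilpotent matrices by Lie's theorem (complexify and put $\mathfrak{r}$ simultaneously in upper triangular form), gives $[\mathfrak{r}, \mathfrak{r}] = 0$, so $\mathfrak{r}$ is abelian. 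Then by (b) each $X \in \mathfrak{r}$ is a diagonalizable matrix, so $\operatorname{ad} X$ is diagonalizable on $M(n,\br)$, hence on the invariant subspace $\fG$, whence $\fG = \ker(\operatorname{ad} X|_{\fG}) \oplus \operatorname{im}(\operatorname{ad} X|_{\fG})$; but $\operatorname{im}(\operatorname{ad} X|_{\fG}) = [X,\fG] \subseteq \mathfrak{r}$ (ideal) while $[X,\mathfrak{r}] = 0$ (abelian), so the image lies in the kernel and therefore is $0$, i.e. $X \in \fZ(\fG)$. Hence $\mathfrak{r} \subseteq \fZ(\fG)$.

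The main obstacle is the identity $\operatorname{rad}(\fG) = \fZ(\fG)$ — that a self adjoint subalgebra of $M(n,\br)$ is reductive; everything else is bookkeeping. The delicate points there concern self adjoint subspaces: that one can pass freely between such a subspace and its symmetric and antisymmetric parts, that a normal nilpotent matrix is $0$, and that $\operatorname{ad}$ of a diagonalizable matrix remains diagonalizable on any invariant subspace. One could instead simply cite that the Lie algebra of a self adjoint subgroup of $GL(n,\br)$ is reductive.
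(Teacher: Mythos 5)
Your proof is correct, but for assertion 3) it takes a genuinely different route from the paper's. Assertions 1) and 2) you handle essentially as the paper does (Lemma 5.1A in Appendix II), via $(\operatorname{ad} Y)^{*} = \operatorname{ad} Y^{t}$ and the observation that transpose is an isometry preserving $\fG$ and $\fZ(\fG)$. For 3) the paper reduces, again via 5.1A, to the claim that a self adjoint subalgebra with trivial center is semisimple, and then verifies Cartan's criterion directly: using $(\operatorname{ad} X)^{*} = \operatorname{ad} X^{t}$ it shows the Killing form satisfies $B(\fK,\fP)=0$, is negative definite on $\fK$ and positive definite on $\fP$, so it is nondegenerate. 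Your proof instead establishes the structural fact that $\fG$ is reductive, i.e.\ $\operatorname{rad}(\fG)=\fZ(\fG)$: the radical is a characteristic ideal and hence self adjoint; its derived algebra consists of nilpotent matrices by Lie's theorem and is self adjoint, and since a normal nilpotent matrix vanishes this forces the radical to be abelian; its elements, being sums of commuting symmetric and skew-symmetric matrices, are then diagonalizable, and the $\ker\oplus\operatorname{im}$ decomposition of $\operatorname{ad} X|_{\fG}$ forces each such $X$ into the center. You then quote the semisimplicity of $\fG/\operatorname{rad}(\fG)$. The paper's argument is more elementary and self-contained (a direct Killing-form computation, no appeal to Lie's theorem or to properties of the radical), which suits its stated aim of keeping the exposition accessible. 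Your argument is more conceptual and isolates the reductivity of self adjoint subalgebras as the real content, which you correctly note could also simply be cited from the literature (e.g.\ Mostow). Both are complete proofs.
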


	See Appendix II for the proof.
\newline

$\mathit{The~case~of~an~irreducible~action}$

	Let G be a self adjoint subgroup of $GL(n,\br)$ and let V be a G-invariant subspace of $\br^{n}$.  If $V^{\perp} = \{w \in \br^{n} : \langle v , w \rangle = 0~\rm{for~all~v~\in~V} \}$, then $V^{\perp}$ is invariant under $G^{t} = G$.  It follows that $\br^{n}$ is an orthogonal direct sum of  irreducible G invariant subspaces, and the restriction of G to each such subspace V is a self adjoint subgroup of GL(V) relative to the restriction of the canonical inner product $\langle , \rangle$ to V.  We investigate the structure of G and $\fG$ restricted to V $\approx \br^{k}$, keeping in mind the fact that some normal subgroup of G may be zero when restricted to V.  Note that if G is connected, then G acts irreducibly on $\br^{n} \Leftrightarrow$ its Lie algebra $\fG$ acts irreducibly on $\br^{n}$.

\begin{proposition}  Let $\fG$ be a self adjoint Lie subalgebra of $M(n,\br)$ that acts irreducibly on $\br^{n}$, and let $\fZ(\fG)$ denote the center of $\fG$.  Then

	1)  $\fZ(\fG) = (\fZ(\fG) ~\cap~\fP) \oplus (\fZ(\fG) ~\cap~\fK)$
	
	2)  If $\fZ(\fG) ~\cap~\fP \neq \{0 \}$, then $\fZ(\fG) ~\cap~\fP = \br~Id$.
	
	3)  If $\fZ(\fG) ~\cap~\fK \neq \{0 \}$, then $\fZ(\fG) ~\cap~\fK = \br~J$, where $J^{2} = -~Id$.
\end{proposition}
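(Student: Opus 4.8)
The plan is to base everything on one elementary observation, which I would record as the first step: \emph{a real symmetric matrix $S\in M(n,\br)$ commuting with every element of $\fG$ must be a scalar multiple of $Id$.} Indeed, such an $S$ is diagonalizable over $\br$, each eigenspace $V_{\lambda}=\{w: Sw=\lambda w\}$ is invariant under $\fG$ (since $S$ commutes with $\fG$), so irreducibility forces $S$ to have a single eigenspace. I would pair this with the remark that $\fZ(\fG)$ is itself self adjoint: if $ZX=XZ$ for all $X\in\fG$ then $X^{t}Z^{t}=Z^{t}X^{t}$ for all $X\in\fG$, and since $\fG^{t}=\fG$ this says $Z^{t}\in\fZ(\fG)$. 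Note also that $\fZ(\fG)$ is an abelian Lie algebra, since $[\fZ(\fG),\fG]=0$.

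For assertion 1), using that $\fZ(\fG)$ is self adjoint I would write each $Z\in\fZ(\fG)$ as $Z=\tfrac12(Z+Z^{t})+\tfrac12(Z-Z^{t})$; both summands again lie in $\fZ(\fG)$, the first in $\fZ(\fG)\cap\fP$ and the second in $\fZ(\fG)\cap\fK$, and the sum is direct (and in fact orthogonal by 2) of (4.2)). For assertion 2), if $0\neq X\in\fZ(\fG)\cap\fP$ then $X$ is symmetric and commutes with $\fG$, so by the first step $X=\lambda\,Id$ with $\lambda\neq 0$; dividing by $\lambda$ gives $Id\in\fZ(\fG)\cap\fP$, hence $\fZ(\fG)\cap\fP=\br\,Id$.

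Assertion 3) is the substantive one, and I would argue as follows. Assume $\fZ(\fG)\cap\fK\neq\{0\}$ and fix a nonzero $J_{0}$ in it. First, $\ker J_{0}$ is $\fG$-invariant (since $J_{0}$ commutes with $\fG$), so by irreducibility $\ker J_{0}=\{0\}$, i.e.\ $J_{0}$ is invertible. Next, $J_{0}^{2}$ is symmetric, $(J_{0}^{2})^{t}=(-J_{0})^{2}=J_{0}^{2}$, and commutes with $\fG$, so by the first step $J_{0}^{2}=\mu\,Id$; evaluating $\langle J_{0}^{2}v,v\rangle=\langle J_{0}v,J_{0}^{t}v\rangle=-|J_{0}v|^{2}<0$ for $v\neq 0$ shows $\mu<0$, so with $J:=J_{0}/\sqrt{-\mu}$ we get $J\in\fZ(\fG)\cap\fK$ and $J^{2}=-Id$. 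Finally, for an arbitrary $J'\in\fZ(\fG)\cap\fK$ the product $JJ'$ is symmetric — here I use that $J$ and $J'$ commute, which holds because $\fZ(\fG)$ is abelian, so $(JJ')^{t}=(-J')(-J)=J'J=JJ'$ — and $JJ'$ commutes with $\fG$, hence $JJ'=\nu\,Id$ for some $\nu\in\br$, giving $J'=\nu J^{-1}=-\nu J\in\br J$. Thus $\fZ(\fG)\cap\fK=\br J$ with $J^{2}=-Id$.

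The only place any care is needed is the last step of 3): one must notice that a product of \emph{commuting} antisymmetric matrices is symmetric, so that the "symmetric $\Rightarrow$ scalar" principle applies and collapses $\fZ(\fG)\cap\fK$ to a line; the commutativity of central elements is precisely what supplies this. (Without it one would only know that such products lie in the commutant algebra of $\fG$, and would be pushed into the longer but equivalent route via Schur's lemma and the Frobenius classification of that algebra as $\br$, $\bc$ or $\bh$.) Everything else is bookkeeping with the symmetric/antisymmetric splitting and irreducibility.
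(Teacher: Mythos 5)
Your proof is correct, and it follows essentially the same skeleton as the paper's: 1) is reduced to the self-adjointness of $\fZ(\fG)$ (which the paper cites from (5.1) while you reprove on the spot), and both 2) and 3) rest on the same key observation that a symmetric matrix commuting with all of $\fG$ is scalar by irreducibility. You state that observation directly at the Lie-algebra level, whereas the paper passes through the connected group $G$ generated by $\exp(\fG)$, but these are interchangeable.

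The one place you genuinely diverge is the dimension count in 3). The paper produces $A^{2}=\lambda\,Id$ for $A\in\fZ(\fG)\cap\fK$ and then argues that $A^{2}=B^{2}$ forces $A=\pm B$, by factoring $0=A^{2}-B^{2}=(A-B)(A+B)$ (using commutativity), squaring, and observing that $(A-B)^{2}$ and $(A+B)^{2}$ are scalars whose product is zero; it leaves the normalization ``rescale so $A^{2}=B^{2}$'' and the step ``$(A-B)^{2}=0\Rightarrow A=B$ for antisymmetric $A-B$'' implicit. You instead normalize once to get $J$ with $J^{2}=-Id$, and then for arbitrary $J'\in\fZ(\fG)\cap\fK$ note that $JJ'$ is symmetric (because $J,J'$ commute and are antisymmetric) and central, hence $JJ'=\nu\,Id$, so $J'=-\nu J$. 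Both arguments turn on the abelian-ness of $\fZ(\fG)$; yours avoids the case split and the implicit normalization, and makes the role of commutativity more transparent. The tradeoff is minor, but your version is a bit tighter and would be slightly easier for a reader to check.
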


\begin{proof}  Assertion 1) follows immediately from 2) of (5.1).  To prove 2) and 3) we consider the connected Lie subgroup G of $GL(n,\br)$ whose Lie algebra is $\fG$.  Suppose that  $\fZ(\fG)~\cap~\fP \neq \{0 \}$ and let X be a nonzero element of $\fZ(\fG)~\cap~\fP$.  The connected group G is generated by exp($\fG$) and hence G commutes with X.  It follows that G leaves invariant every eigenspace of X, and we conclude that $X = \lambda~I$ for some nonzero real number $\lambda$ since G acts irreducibly on $\br^{n}$.  This proves 2).

	We prove 3).  Suppose that  $\fZ(\fG)~\cap~\fK \neq \{0 \}$ and let A be a nonzero element of $\fZ(\fG)~\cap~\fK$.   Then $A^{2}$ is symmetric and negative semidefinite, and G commutes with $A^{2}$ since G commutes with A by the argument used in the proof of 2).  This argument also shows that $A^{2} = \lambda~I$ for some nonzero real number $\lambda$.  To show that dim $\fZ(\fG)~\cap~\fK = 1$ it suffices to prove that if $A^{2} = B^{2}$ for elements A,B of $\fZ(\fG)~\cap~\fK$, then either $A = B$ or $A = - B$.  If $0 = A^{2} - B^{2} = (A-B)(A+B)$, then $0 = (A-B)^{2}(A+B)^{2} = \lambda_{1} \lambda_{2}~I$, where $(A-B)^{2} = \lambda_{1}~I$ and $(A+B)^{2} = \lambda_{2}~I$.  Hence either $\lambda_{1} = 0$ and A $=$ B or $\lambda_{2} = 0$ and A $= -$ B.
		
	If J is a nonzero element of $\fZ(\fG)~\cap~\fK$, then by the discussion above we may assume that $J^{2} = -~I$, multiplying J by a suitable constant.  This proves 3).
\end{proof}
	
\begin{proposition}  Let G be a closed, connected, self adjoint, noncompact subgroup of $GL(n,\br)$ that acts irreducibly on $\br^{n}$.  Let $\fG$ denote the Lie algebra of G, and suppose that $\fG$ is not semisimple.  Then at least one of the following holds.

	1)  0 $\in \overline{G(v)}$ for all v $\in \br^{n}$.
	
	2) a)  There exists J $\in \fs \fo(n,\br)$ such that $J^{2} = - I$ and $\fZ(\fG) = \br-\rm{span}~\{J \}$.
	
\hspace{.10in}	b)  $G = Z \cdot G_{0}$, where $G_{0}$ is a connected, normal, semisimple subgroup of G and Z $\approx S^{1}$ is a compact, connected, central subgroup of G that lies in $O(n,\br)$.

	Moreover, if v is any vector in $\br^{n}$, then G(v) is closed in $\br^{n} \Leftrightarrow$ G$_{0}$(v) is closed in $\br^{n}$.
\end{proposition}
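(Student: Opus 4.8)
The plan is to run a case analysis on the center $\fZ(\fG)$, using only (5.1) and (5.2). By (5.1) we have the orthogonal decomposition $\fG = \fZ(\fG) \oplus \fG_{0}$ into self adjoint ideals with $\fG_{0}$ semisimple, and since $\fG$ is not semisimple we must have $\fZ(\fG) \neq \{0\}$. By 1) of (5.2), $\fZ(\fG) = (\fZ(\fG) \cap \fP) \oplus (\fZ(\fG) \cap \fK)$, so at least one summand is nonzero. If $\fZ(\fG) \cap \fP \neq \{0\}$, then 2) of (5.2) forces $I \in \fG$, hence $e^{-t}I = \exp(-tI) \in G$ for all $t \in \br$ by 2) of (4.4); for every $v \in \br^{n}$ we then have $e^{-t}v = (e^{-t}I)(v) \in G(v)$ with $e^{-t}v \to 0$ as $t \to +\infty$, so $0 \in \overline{G(v)}$, which is assertion 1).

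Assume now $\fZ(\fG) \cap \fP = \{0\}$. Then $\fZ(\fG) \cap \fK = \fZ(\fG) \neq \{0\}$, so 3) of (5.2) yields $J \in \fs\fo(n,\br)$ with $J^{2} = -I$ and $\fZ(\fG) = \br J$; this is 2a). For 2b), set $Z = \exp(\br J)$. From $J^{2} = -I$ the exponential series gives $\exp(tJ) = (\cos t)\,I + (\sin t)\,J$, so $Z$ is a circle subgroup; it is compact and connected, it lies in $O(n,\br)$ since $J^{t} = -J$ and $J^{2} = -I$ give $\exp(tJ)\exp(tJ)^{t} = I$, and it is central in $G$ because $\br J$ is central in $\fG$ and $G$ is connected, so $\mathrm{Ad}(\exp tJ) = \exp(t\,\mathrm{ad}\,J) = \mathrm{id}$ on $\fG$ and thus $\exp(tJ)$ commutes with $\exp(\fG)$, hence with $G$. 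Let $G_{0}$ be the connected Lie subgroup of $GL(n,\br)$ with Lie algebra $\fG_{0}$; it is normal in $G$ (because $\fG_{0}$ is an ideal and $G$ is connected), self adjoint (because $\fG_{0}$ is), semisimple (because $\fG_{0}$ is), and closed in $GL(n,\br)$ (the analytic subgroup attached to a semisimple self adjoint subalgebra of $M(n,\br)$ is closed; a standard fact). Finally the product map $Z \times G_{0} \to G$, $(z,g) \mapsto zg$, is a homomorphism because $Z$ is central, and its differential at the identity is the isomorphism $(X,Y) \mapsto X+Y$ from $\br J \oplus \fG_{0}$ onto $\fG$ of (5.1); hence its image contains a neighborhood of the identity, and since $G$ is connected the image is all of $G$, i.e.\ $G = Z \cdot G_{0}$. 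In particular $G_{0}$ is noncompact, since $G$ is noncompact and $Z$ is compact.

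For the orbit equivalence (the ``moreover'' clause, which is meaningful in this second case, where $G_{0}$ is now in hand), the key remark is that a vector $w \in \br^{n}$ is minimal for $G$ if and only if it is minimal for $G_{0}$: ``only if'' is immediate from $G_{0} \subseteq G$, and ``if'' holds because every $g = z g_{0} \in Z \cdot G_{0} = G$ satisfies $|g(w)| = |z(g_{0}(w))| = |g_{0}(w)| \ge |w|$ by $z \in O(n,\br)$. Now if $G_{0}(v)$ is closed, then $G(v) = Z \cdot G_{0}(v) = \bigcup_{z \in Z} z(G_{0}(v))$ is closed: if $z_{k}(a_{k}) \to y$ with $z_{k} \in Z$ and $a_{k} \in G_{0}(v)$, pass to a subsequence with $z_{k} \to z \in Z$ (compactness of $Z$); then $a_{k} = z_{k}^{-1}(z_{k}(a_{k})) \to z^{-1}(y)$, so $z^{-1}(y) \in G_{0}(v)$ and $y \in G(v)$. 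Conversely, suppose $G(v)$ is closed. By Theorem 4.4 of [RS] applied to $G$, the orbit $G(v)$ contains a $G$-minimal vector $w = g(v)$; write $g = z g_{0} \in Z \cdot G_{0}$ and put $u = z^{-1}(w) = g_{0}(v) \in G_{0}(v)$. Since $z^{-1} \in O(n,\br)$ is central, for any $h \in G$ we get $|h(u)| = |z^{-1}(h(w))| = |h(w)| \ge |w| = |u|$, so $u$ is again $G$-minimal, hence $G_{0}$-minimal by the key remark. Applying Theorem 4.4 of [RS] to $G_{0}$ (closed, connected, self adjoint, noncompact) now shows that $G_{0}(u) = G_{0}(v)$ is closed.

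The one genuinely delicate point is the implication ``$G(v)$ closed $\Rightarrow$ $G_{0}(v)$ closed'', which relies on applying the criterion of [RS] to $G_{0}$ and hence on knowing that the analytic subgroup of $GL(n,\br)$ associated with the semisimple ideal $\fG_{0}$ is closed; everything else is bookkeeping with (5.1), (5.2) and elementary properties of $\exp$.
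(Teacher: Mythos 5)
Your case analysis on $\fZ(\fG)$ and the construction of $J$, $Z$, and $G_{0}$ follow the paper's proof of (5.3) essentially verbatim: the dichotomy on $\fZ(\fG)\cap\fP$ via 2) and 3) of (5.2), the observation that a central $\lambda I$ drives every vector to zero, the formula $\exp(tJ)=(\cos t)I+(\sin t)J$, and the identification $G = Z\cdot G_{0}$ all match. (The paper gets $G = Z\cdot G_{0}$ by observing both are connected subgroups with Lie algebra $\fG$, while you argue via the homomorphism $Z\times G_{0}\to G$ having surjective differential; these are interchangeable.)

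Where you genuinely go beyond the paper is the \emph{Moreover} clause. The paper's proof closes with ``This proves 2b) and the remark that follows it,'' offering no argument for the orbit equivalence $G(v)$ closed $\Leftrightarrow G_{0}(v)$ closed. You supply a complete one. The forward direction ($G_{0}(v)$ closed $\Rightarrow G(v)$ closed, via compactness of $Z$ and a subsequence) is routine, but the reverse direction is not obvious, and your route through minimal vectors is clean: pull a $G$-minimal vector $w$ back into $G_{0}(v)$ by a central orthogonal element, observe it remains $G$-minimal hence $G_{0}$-minimal, and invoke [RS] Theorem 4.4 for $G_{0}$. You are also right to flag the one hidden hypothesis — closedness of $G_{0}$ in $GL(n,\br)$ — which the paper itself justifies elsewhere (the remark after (10.3)) by citing Mostow [M1] for analytic subgroups with semisimple self-adjoint Lie algebra. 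In short: your proof of 1), 2a), 2b) matches the paper's, and your treatment of the orbit equivalence fills a gap the paper leaves implicit.
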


$\mathit{Remark}$

	If condition 2) holds, then n is even and $\br^{n}$ has a complex structure given by $(a+ib)v = av + i(Jv)$.  The group G becomes a group of complex linear maps of $\br^{n}$ since G commutes with J.

\begin{proof}  If $\fG$ is not semisimple, then $0 \neq \fZ(\fG) = \{\fZ(\fG)~\cap~\fK) \} \oplus  \{\fZ(\fG)~\cap~\fP) \}$ by (5.2).  We show that $\fZ(\fG)~\cap~\fP \neq \{0 \}$ implies 1) while $\fZ(\fG)~\cap~\fP = \{0 \}$ implies 2).

	Suppose first that $\fZ(\fG)~\cap~\fP \neq \{0 \}$, and let X $\in \fZ(\fG)~\cap~\fP$.  By 2) of (5.2) $X = \lambda~I$ for some nonzero real number $\lambda$.  If v is any vector in $\br^{n}$, then $exp(tX)(v) = exp(t \lambda)v \rightarrow 0$ as $t \rightarrow + \infty$ or as $t \rightarrow - \infty$.  It follows that $0 \in \overline{G(v)}$ for all v $\in \br^{n}$.
	
	Next suppose that $\fZ(\fG)~\cap~\fP = \{0 \}$, which implies that $\fZ(\fG) \subset \fK \subset \fs \fo(n,\br)$.  Assertion 2a) is assertion 3) of (5.2).
	
	Let $Z = exp(\fZ(\fG))$ and let $G_{0}$ be the connected Lie subgroup of $GL(n,\br)$ with Lie algebra $\fG_{0}$ in the notation of (5.1).  Since $J^{2} = - I$ it follows that $exp(tJ) = (cos~t)~I + (sin~t)~J$ for all $t \in \br$.  Hence $Z = exp(\br J)$ is a compact, connected 1-dimensional subgroup of $O(n,\br)$ by 2a).  The group $G_{0}$ is a normal subgroup of G since $\fG_{0}$ is an ideal of $\fG$ by 2) of (5.1). The groups G and $Z \cdot G_{0}$ are equal since both are connected subgroups of $GL(n,\br)$ with Lie algebra $\fG$.  This proves 2b) and the remark that follows it.
\end{proof}

$\mathit{The~KP~decomposition}$
	
	We are now ready to prove the main result of this section, the KP decomposition of G.  This result is well known in the context of algebraic groups, not necessarily connected, but the proof is different from that given here.  See for example Lemma 1.7 of [B-HC].  Let P $= exp(\fP) \subset$ G.
	
\begin{proposition}  Let G be a connected, closed, self adjoint, noncompact subgroup of $GL(n,\br)$, and let $\fG = \fK \oplus \fP$ denote the Lie algebra of G.  Let $K = G~\cap~O(n,\br)$.  Then  for every g $\in$ G there exist unique elements k $\in$ K and X $\in \fP$ such that g $=$ k exp(X) and
$|X| = d_{L}(g,K)$.
\end{proposition}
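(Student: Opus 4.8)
The plan is to prove the KP decomposition by a differential-geometric argument inside the complete Riemannian manifold $(G,d_{L})$, using that the one-parameter subgroups $t\mapsto \exp(tX)$ with $X\in\fP$ are geodesics through the identity. Observe at the outset that $K=G\cap O(n,\br)$ is a closed subgroup of the compact group $O(n,\br)$, hence a compact submanifold of $G$, and that $(G,d_{L})$ is complete by (3.1). I will extract the existence part of the statement directly in the desired metric form, so that the identity $|X|=d_{L}(g,K)$ comes out automatically rather than requiring a separate argument; uniqueness will then follow from a short matrix computation.

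For existence, fix $g\in G$. Since $K$ is compact, choose $k\in K$ with $\ell:=d_{L}(g,k)=d_{L}(g,K)$, and put $h:=k^{-1}g$. Because every left translation is an isometry of $(G,d_{L})$ and $L_{k^{-1}}$ carries $K$ onto $K$, we have $d_{L}(h,e)=d_{L}(h,K)=\ell$, so $e$ is a foot point of $K$ for $h$. Let $c:[0,\ell]\to G$ be a unit-speed minimal geodesic from $e$ to $h$; it exists by completeness. Since $c$ realizes the distance from $h$ to the submanifold $K$ and $c(0)=e\in K$, the first variation formula for arclength forces $c'(0)$ to be orthogonal to $T_{e}K=\fK$; hence $c'(0)\in\fP$, because $\fG=\fK\oplus\fP$ is an orthogonal direct sum by 1) of (4.5). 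Set $X:=\ell\,c'(0)\in\fP$. As $X^{t}=X$ we have $XX^{t}=X^{t}X$, so by the $d_{L}$-analogue of 3) of (4.5) the curve $s\mapsto\exp(sX)$ is a geodesic of $(G,d_{L})$ with initial velocity $X=\ell\,c'(0)$; by uniqueness of geodesics with prescribed initial data it coincides with $s\mapsto c(\ell s)$, and evaluating at $s=1$ gives $\exp(X)=c(\ell)=h$. Therefore $g=k\exp(X)$ with $k\in K$ and $X\in\fP$, and $|X|=\ell\,|c'(0)|=\ell=d_{L}(g,K)$. (If $g\in K$ then $\ell=0$ and $X=0$.)

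For uniqueness, suppose $g=k_{1}\exp(X_{1})=k_{2}\exp(X_{2})$ with $k_{i}\in K$ and $X_{i}\in\fP$. Using $k_{i}^{t}k_{i}=I$ we obtain $g^{t}g=\exp(X_{1})k_{1}^{t}k_{1}\exp(X_{1})=\exp(2X_{1})$ and, in the same way, $g^{t}g=\exp(2X_{2})$. Since $\exp$ is injective on the space of symmetric matrices, $2X_{1}=2X_{2}$, hence $X_{1}=X_{2}$ and then $k_{1}=k_{2}$.

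The substantive step is existence, and within it the delicate point is the passage from the abstract minimal geodesic $c$ to the explicit curve $\exp(sX)$: one must use that a distance-minimizing geodesic meets the submanifold $K$ orthogonally (first variation), and that the symmetry $X^{t}=X$ is precisely the condition from (4.5) that makes $s\mapsto\exp(sX)$ a $d_{L}$-geodesic, so that the uniqueness theorem for geodesics applies and identifies $h$ with $\exp(X)$. Everything else is either a standard fact about complete Riemannian manifolds or a direct matrix computation.
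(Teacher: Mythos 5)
Your proof is correct and follows essentially the same route as the paper's: both pick a nearest point $k\in K$, left-translate to reduce to a minimal geodesic from the identity, use orthogonality of the foot-point velocity to $\fK$ to conclude $X\in\fP$, invoke the $d_{L}$-version of 3) of (4.5) to identify the minimal geodesic with $s\mapsto\exp(sX)$, and handle uniqueness via the positive definite square root of $g^{t}g$. The only differences are cosmetic (arclength parametrization on $[0,\ell]$ versus affine on $[0,1]$, and phrasing uniqueness via injectivity of $\exp$ on symmetric matrices rather than via unique positive-definite square roots).
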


\begin{proof}  We prove uniqueness first.  Let g $=$ kp, where k $\in$ K and p $= exp(X) \in$ P for some X $\in \fP$.  Then $g^{t}g = p^{t} k^{t} k p = p^{2}$.  The elements of P are symmetric since the elements of $\fP$ are symmetric, and they are positive definite since the eigenvalues of exp(X) are the exponentials of the eigenvalues of X for all X $\in \fP$.  Hence p is the unique positive definite square root of $g^{t}g$.  The element k is also uniquely determined since $k = gp^{-1}$.

	We prove the existence of the decomposition g = kp.  Naively, we could try using the idea of the uniqueness proof and define p to be the positive definite square root of $g^{t}g$ and k to be $gp^{-1}$.  It is easy to check that k $\in O(n,\br)$, but it is not clear that p $\in$ G and k $\in$ G.  We must proceed more indirectly.

	 Let $d_{L}$ denote the canonical left invariant Riemannian metric on G.   Let g $\in$ G be given.  The subgroup K is a compact submanifold of G, and hence there exists a point k $\in$ K such that $d_{L}(g,k) \leq d_{L}(g,k')$  for all $k' \in$ K.  Since $d_{L}$ is left invariant it follows that $d_{L}(k^{-1}g, I) \leq d_{L}(k^{-1}g, k')$ for all $k' \in$ K.  Since (G,d$_{L}$) is complete by (3.1) the theorem of Hopf-Rinow states that there exists a geodesic $\gamma : [0,1] \rightarrow$ G such that $\gamma(0) = I, \gamma(1) = k^{-1}g$ and $d_{L}(I, k^{-1}g)$ is the length of $\gamma$. If X $= \gamma'(0) \in T_{I}G \approx \fG$, then X is orthogonal to $T_{I}K \approx \fK$ since I is the point in K closest to k$^{-1}$g (cf. Proposition 1.5 of [CE]). Hence X $\in \fP$ by 2) of (4.2) and 1) of (4.5).  By 3) of (4.5) the curve $\sigma(s) = exp(sX)$ is also a geodesic of G since $X^{t} = X$.
Hence $\gamma(s) = \sigma(s) = exp(sX)$ for all s since $\gamma'(0) = \sigma'(0) = X$.  It follows that $exp(X) = \gamma(1) = k^{-1}g$, or equivalently, $g = k exp(X)$.
	
	Finally, it was shown above that $|X| =$ length of $\gamma = d_{L}(k^{-1}g, K) = d_{L}(g,K)$.
\end{proof}

\begin{corollary}  Let G be as in (5.4), and let $d_{L}$ denote the canonical left invariant Riemannian metric on G. Let X $\in \fP$ with $|X| = 1$, and let $\gamma_{X}(s) = exp(sX)$ for s $\in \br$.  Then $\gamma_{X}(s)$ is a minimizing geodesic of $(G,d_{L})$ ; that is $d_{L}(\gamma_{X}(s), \gamma_{X}(t)) = |t-s|$, the length of $\gamma_{X}$ on [s,t], for all s $\leq$ t $\in \br$.
\end{corollary}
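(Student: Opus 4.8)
The plan is to establish the two inequalities $d_{L}(\gamma_{X}(s),\gamma_{X}(t)) \leq t-s$ and $d_{L}(\gamma_{X}(s),\gamma_{X}(t)) \geq t-s$ separately: the first from the fact that $\gamma_{X}$ is a geodesic parametrized by arc length, and the second from the KP decomposition (5.4) together with the left invariance of $d_{L}$.

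For the upper bound, I would first observe that $\gamma_{X}$ is a geodesic of $(G,d_{L})$. By the $d_{L}$-analogue of assertion 3) of (4.5) (valid by the Remark following (4.5)), the curve $u \mapsto exp(uX)$ is a geodesic of $(G,d_{L})$ if and only if $XX^{t} = X^{t}X$, which holds since $X \in \fP$ is symmetric. By 2) of (4.4), $\gamma_{X}$ is the integral curve through $I$ of the left invariant vector field $X_{L}$, so $|\gamma_{X}'(u)|_{L} = |X_{L}(I)|_{L} = |X| = 1$ for all $u$ (a left invariant vector field has constant norm in $d_{L}$). Hence $\gamma_{X}$ is parametrized by arc length and the restriction of $\gamma_{X}$ to $[s,t]$ has length exactly $t-s$. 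Since $d_{L}$ is the Riemannian distance of the canonical left invariant structure, this gives $d_{L}(\gamma_{X}(s),\gamma_{X}(t)) \leq t-s$.

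For the lower bound, since $X$ commutes with itself we have $\gamma_{X}(s)^{-1}\gamma_{X}(t) = exp((t-s)X)$, so left invariance of $d_{L}$ gives $d_{L}(\gamma_{X}(s),\gamma_{X}(t)) = d_{L}(I, exp((t-s)X))$. Because $(t-s)X \in \fP$, the expression $exp((t-s)X) = I \cdot exp((t-s)X)$ is precisely the KP decomposition of $exp((t-s)X)$ supplied by (5.4), so (5.4) yields $d_{L}(exp((t-s)X), K) = |(t-s)X| = t-s$. Since $I \in K$, we conclude $d_{L}(I, exp((t-s)X)) \geq d_{L}(exp((t-s)X), K) = t-s$. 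Combining the two bounds gives $d_{L}(\gamma_{X}(s),\gamma_{X}(t)) = t-s$ for all $s \leq t$.

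The only substantive input is (5.4); everything else is bookkeeping with definitions and left invariance. The one point that genuinely needs care is that a geodesic is a priori merely locally length-minimizing, so it could in principle admit a shortcut or even close up over a long parameter interval — and this is exactly what the distance-to-$K$ identity of (5.4) rules out globally, which is why it delivers the matching lower bound.
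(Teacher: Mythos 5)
Your proof is correct and follows essentially the same approach as the paper: both rest on observing that the KP factorization of $exp(sX)$ is $I \cdot exp(sX)$ (uniqueness in (5.4) forces $k=I$, $Y=sX$), which pins down the distance, and then use left invariance of $d_L$ to slide this to arbitrary intervals $[s,t]$. The only organizational difference is that you split the argument into an upper bound (unit-speed parametrization of $\gamma_X$) and a lower bound ($d_L(I, exp((t-s)X)) \geq d_L(K, exp((t-s)X)) = t-s$), relying only on the statement of (5.4), whereas the paper additionally invokes the construction inside the proof of (5.4), which identifies $I$ as the unique closest point of $K$ to $exp(sX)$ and hence gives $d_L(exp(sX),I) = |s|$ in one step; both are fine. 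One small notational slip: in justifying unit speed you should write $|\gamma_X'(u)|_L = |X_L(\gamma_X(u))|_L = |X_L(I)|_L$, with the middle term evaluated at $\gamma_X(u)$, before appealing to left invariance of the vector field and the metric.
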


$\mathit{Remark}$  The same result holds for $(G, d_{R})$ since by (4.1)we have $d_{R}(\gamma_{X}(s), \gamma_{X}(s')) = \newline d_{L}(\gamma_{X}(s)^{t}, \gamma_{X}(s')^{t}) = d_{L}(\gamma_{X}(s), \gamma_{X}(s')) = |s - s'|$ for all $s \leq s' \in \br$.

\begin{proof}  We now prove (5.5).  For a fixed real number s write $\gamma_{X}(s) = k exp(Y)$, where $k = I$ and $Y = sX$.  The uniqueness part of the KP decomposition and the proof of (5.4) show that $|s| = |Y| = d_{L}(k^{-1} \gamma_{X}(s), I) = d_{L}(exp(sX), I)$. The curve $\gamma_{X}(s)$ is a geodesic of G by 3) of (4.5), and the left invariance of $d_{L}$ shows that $d_{L}(\gamma_{X}(s), \gamma_{X}(t)) = d_{L}(exp(sX), exp(tX)) = d_{L}(I, exp((t-s)X)) = |s-t|$.
\end{proof}

	The next result is an extension of the KP decomposition that is used in the proof of the main result, (8.1).  It also has some interest in its own right.  It is unclear if the elements k, X and h that appear in the statement of this result are unique.

    For the next result we define $\widetilde{\fP_{v}} = (\fK + \fG_{v})^{\perp} = \{\zeta \in \fG : \langle \zeta , \eta \rangle = 0 \}$ for
every $\eta \in \fK + \fG_{v}$.  Note that $\widetilde{\fP_{v}} \subset \fP$ since $\langle \fK , \fP \rangle = 0$ by (4.2).
	
\begin{proposition}  Let G be as in (5.4), and let v $\in \br^{n}, v \neq 0$. For every g $\in$ G there exist elements k $\in$ K, h $\in G_{v}$ and $X \in \widetilde{\fP_{v}}$ such that  $g = k~ exp(X)~ h$ and $|X| = d_{R}   (g,K \cdot G_{v}) = \newline d_{R}(exp(X), K \cdot G_{v}) = d_{R}(exp(X), I)$
\end{proposition}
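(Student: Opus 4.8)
The plan is to follow the proof of the KP decomposition (5.4), replacing the compact subgroup $K$ by the closed set $K\cdot G_v$ and the left invariant metric $d_L$ by the right invariant metric $d_R$. The one genuinely new device is an isometry of $(G,d_R)$ that reduces the assertion to the case in which the identity $I$ is the point of $K\cdot G_v$ nearest to $g$; this reduction is available precisely because the left translations $L_k$, $k\in K$, are isometries of $(G,d_R)$ by 4) of (4.5), just as the right translations $R_h$, $h\in G$, always are.

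First I would observe that $K\cdot G_v$ is closed in $G$, being the product of the compact set $K$ and the closed set $G_v$ in the topological group $G$. Since $(G,d_R)$ is complete by (3.1), the function $d_R(g,\cdot)$ attains a minimum over $K\cdot G_v$ at some point $q_0=k_0 h_0$ with $k_0\in K$ and $h_0\in G_v$. Let $\Phi=L_{k_0^{-1}}\circ R_{h_0^{-1}}$. By the remarks above $\Phi$ is an isometry of $(G,d_R)$; moreover $\Phi(q_0)=I$ and $\Phi(K\cdot G_v)=k_0^{-1}K\,G_v\,h_0^{-1}=K\cdot G_v$, since $k_0^{-1}K=K$ and $G_v h_0^{-1}=G_v$. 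Replacing $g$ by $\Phi(g)$, we may therefore assume that $d_R(g,I)=d_R(g,K\cdot G_v)$.

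Next, using the Hopf-Rinow theorem, choose a minimizing geodesic $\sigma:[0,1]\to G$ with $\sigma(0)=I$ and $\sigma(1)=g$, and set $X=\sigma'(0)\in T_IG\approx\fG$, so that $|X|=\mathrm{length}(\sigma)=d_R(I,g)=d_R(g,K\cdot G_v)$. Because $I$ is the point of $K\cdot G_v$ closest to $g$, the first variation formula shows that $X$ is orthogonal to the velocity at $I$ of every smooth curve in $K\cdot G_v$ issuing from $I$; applying this to $t\mapsto\exp(tY)\in K$ for $Y\in\fK$ and to $t\mapsto\exp(tZ)\in G_v$ for $Z\in\fG_v$ shows that $X$ is orthogonal to $\fK+\fG_v$, i.e. $X\in\widetilde{\fP_v}$, and hence $X\in\fP$. (Equivalently one checks that $K\cdot G_v$ is the preimage under $G\to G/G_v$ of the compact embedded submanifold $K/(K\cap G_v)$, hence a closed embedded submanifold of $G$ with $T_I(K\cdot G_v)=\fK+\fG_v$, and quotes Proposition 1.5 of [CE] exactly as in (5.4).) Since $X^t=X$, assertion 3) of (4.5) shows that $s\mapsto\exp(sX)$ is a geodesic of $(G,d_R)$; it has the same initial position and velocity as $\sigma$, so $\sigma(s)=\exp(sX)$ for all $s$, and in particular $g=\exp(X)$. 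Undoing the reduction, the original element satisfies $g=k_0\exp(X)h_0$, which is the claimed decomposition with $k=k_0\in K$ and $h=h_0\in G_v$. Finally $|X|=\mathrm{length}(\sigma)=d_R(\exp(X),I)$, and after the reduction $I$ is still the nearest point of $K\cdot G_v$ to $\exp(X)$, so $d_R(\exp(X),K\cdot G_v)=|X|$ as well; since $\Phi$ is an isometry carrying $K\cdot G_v$ onto itself, this common value is also the distance from the original $g$ to $K\cdot G_v$.

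I expect the main obstacle to be the orthogonality step, i.e. deriving $X\perp\fK+\fG_v$ from the fact that $I$ is the nearest point of $K\cdot G_v$. One must either write down the explicit one-parameter curves in $K$ and in $G_v$ and apply the first variation formula directly, or verify that $K\cdot G_v$ is a genuine closed embedded submanifold with tangent space $\fK+\fG_v$ at the identity so that the differential-geometric fact cited in (5.4) applies verbatim. In both routes the compactness of $K$ is essential: it makes $K\cdot G_v$ closed, so that a nearest point exists, and it is what makes $L_{k_0}$ available as an isometry of $(G,d_R)$ in the reduction step.
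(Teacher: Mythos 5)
Your argument is correct and is essentially the paper's own: you find a nearest point $k_0 h_0$ of $K\cdot G_v$ to $g$, translate it to the identity (your isometry $\Phi=L_{k_0^{-1}}\circ R_{h_0^{-1}}$ is exactly the paper's substitution $g'=k_0^{-1}gh_0^{-1}$ justified by right invariance of $d_R$ together with 4) of (4.5)), take the initial velocity $X$ of a minimizing geodesic from $I$ to the translated point, show $X\perp\fK+\fG_v$ via first variation applied to curves in $K$ and in $G_v$ separately, and then use 3) of (4.5) to identify the geodesic with $s\mapsto\exp(sX)$. The only cosmetic difference is that the paper verifies the orthogonality using $K$ and $G_v$ as closed submanifolds (citing Proposition 1.5 of [CE]) rather than packaging them as one set $K\cdot G_v$, but the substance is identical.
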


\begin{proof}  The set $K \cdot G_{v}$ is closed in G, and hence by the completeness of $d_{R}$  there exist elements k $\in$ K and h $\in G_{v}$ such that $d_{R}(g,kh) = d_{R}(g, K \cdot G_{v})$.  If $g' = k^{-1} g h^{-1}$, then by 4) of (4.5) we have $d_{R}(g' , I) = d_{R}(g , kh) = d_{R}(g, K \cdot G_{v}) = d_{R}(g', K \cdot G_{v})$.  Let $\gamma : [0,1] \rightarrow G$ be a geodesic from $I = \gamma(0)$ to $g' = \gamma(1)$ whose length is $d_{R}(g', I)$.

	Let $X = \gamma'(0) \in T_{I}G \approx \fG$.  We show first that X $\in \widetilde{\fP_{v}}$.  If k $\in K \subset K \cdot G_{v}$, then by the previous paragraph $d_{R}(g',I) \leq d_{R}(g',k)$.  Hence X is orthogonal to $T_{I}K$  since K is a closed submanifold of G.  Identifying $T_{I}G$ with $\fG = \fK \oplus \fP$ and $T_{I}K$ with $\fK$ it follows from 2) of (4.2) that X $\in \fP$.  Similarly, $G_{v}$ is a closed submanifold of G, and the argument above shows that X is orthogonal to $T_{I} G_{v}$.  Identifying $T_{I} G_{v}$ with $\fG_{v}$ we conclude that X $\in \widetilde{\fP_{v}}=
(\fK + \fG_{v})^{\perp}$.
	
	Now let $\sigma(s) = exp(sX)$, where X is as above.  The curve $\sigma(s)$ is a geodesic of G by 3) of (4.5) since $X^{t} = X$.  Hence $\gamma(s) = \sigma(s)$ for all s $\in \br$ since both geodesics have the same initial velocity X.  It follows that $g' = \sigma(1) = exp(X)$ and hence
$g = k g' h = k~exp(X)~h$, where X $\in \widetilde{\fP_{v}}, k \in K$ and $h \in G_{v}$. From the work above and (5.5) we obtain $|X| = d_{R}(exp(X), Id) = d_{R}(g' , I) = d_{R}(g', K \cdot G_{v})= d_{R}(g, K \cdot G_{v})= d_{R}(exp(X), K \cdot G_{v})$.  In the final two equalities we also use 4) of (4.5).

\end{proof}

$\mathit{Comparison~of~d_{R}(g,I)~and~ |g|}$

\begin{proposition}  Let G be a connected, closed, self adjoint, noncompact subgroup of $GL(n,\br)$.  Let $d_{R}$ denote the canonical right invariant Riemannian metric on G.  Let c be a positive constant such that $d_{R}(k,I) \leq c$ for all k $\in$ K.  Let g $\in$ G and write $g = k~ exp(X)$, where k $\in$ K and X $\in \fP$.  Let $\lambda_{max}$ denote the largest eigenvalue of X.  Then

 $n^{- \frac{1}{2}} exp(-c)~ exp(|X| - \lambda_{max}) \leq \frac{exp(d_{R}(g,I))}{|g|} \leq exp(c)~ exp(|X| - \lambda_{max})$
\end{proposition}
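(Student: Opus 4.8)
The plan is to bound the numerator $e^{d_{R}(g,I)}$ and the denominator $|g|$ of the displayed quotient separately, each in terms of the eigenvalues of $X$, and then to divide.

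For the numerator I would first establish that $|X| - c \leq d_{R}(g,I) \leq |X| + c$. Writing $g = k~exp(X)$ with $k \in K$ and $X \in \fP$ as in the statement, the right invariance of $d_{R}$ (the translations $R_{h}$ are isometries of $(G,d_{R})$) gives $d_{R}(g, exp(X)) = d_{R}(k~exp(X), exp(X)) = d_{R}(k, I) \leq c$, so by the triangle inequality it suffices to know that $d_{R}(exp(X), I) = |X|$. This last identity is the Remark following (5.5), applied (when $X \neq 0$) to the unit vector $X/|X|$ at the parameter values $0$ and $|X|$, since $exp(X) = \gamma_{X/|X|}(|X|)$ and $I = \gamma_{X/|X|}(0)$; the case $X = 0$ is trivial. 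Exponentiating, $e^{-c}~e^{|X|} \leq e^{d_{R}(g,I)} \leq e^{c}~e^{|X|}$.

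For the denominator I would compute $|g|^{2} = trace(g g^{t})$ directly. Since $X \in \fP$ is symmetric we have $exp(X)^{t} = exp(X)$, and since $k \in K \subset O(n,\br)$ we have $k^{t} = k^{-1}$; hence $g g^{t} = k~exp(2X)~k^{-1}$, and the conjugation invariance of the trace gives $|g|^{2} = trace(exp(2X)) = \sum_{i=1}^{n} e^{2\lambda_{i}}$, where $\lambda_{1} \geq \cdots \geq \lambda_{n}$ are the eigenvalues of $X$ and $\lambda_{1} = \lambda_{max}$. The elementary inequalities $e^{2\lambda_{max}} \leq \sum_{i=1}^{n} e^{2\lambda_{i}} \leq n~e^{2\lambda_{max}}$ then yield $e^{\lambda_{max}} \leq |g| \leq n^{\frac{1}{2}}~e^{\lambda_{max}}$.

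Combining the two estimates finishes the proof: the upper bound for $e^{d_{R}(g,I)}$ together with the lower bound for $|g|$ gives $\frac{e^{d_{R}(g,I)}}{|g|} \leq e^{c}~e^{|X| - \lambda_{max}}$, while the lower bound for $e^{d_{R}(g,I)}$ together with the upper bound for $|g|$ gives $\frac{e^{d_{R}(g,I)}}{|g|} \geq n^{-\frac{1}{2}}~e^{-c}~e^{|X| - \lambda_{max}}$. I do not expect a genuine obstacle here; the argument is a short assembly of the KP decomposition (already incorporated into the hypothesis), the identity $d_{R}(exp(X), I) = |X|$ coming from (5.5) and its Remark, and the conjugation invariance of the trace. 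The only points that require a little care are invoking the invariance of $d_{R}$ under right rather than left translations, and remembering that for the right invariant metric one needs the Remark following (5.5) rather than (5.5) itself.
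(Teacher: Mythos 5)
Your proposal is correct and follows essentially the same route as the paper: establish $|X|-c \le d_R(g,I) \le |X|+c$ and $e^{\lambda_{\max}} \le |g| \le n^{1/2}e^{\lambda_{\max}}$, then divide. The only cosmetic difference is that you justify $d_R(g,\exp X)=d_R(k,I)$ via right invariance of $d_R$ (applying $R_{\exp(X)^{-1}}$), whereas the paper invokes 4) of (4.5) (left translation by $k\in K$ is a $d_R$-isometry) to get $d_R(k\exp X,k)=d_R(\exp X,I)$; and the paper computes $g^tg=\exp(2X)$ directly rather than conjugating $gg^t$, which avoids any appeal to trace invariance, but both are routine.
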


$\mathit{Remark}$  The quantity $|X| - \lambda_{max}$ is always nonnegative and examples show that it may be unbounded as $|X| \rightarrow \infty$, even if $\lambda_{max} > 0$.  Hence the inequalities above, although close to optimal, don't give a satisfactory relationship between d(g,I) and $|g|$ for g $\in$ G.

\begin{proof}  The assertion is an immediate consequence of the following statements :

	(1)  $|X| - c \leq d_{R}(g,I) \leq |X| + c$
	
	(2)  $exp(\lambda_{max}) \leq |g| \leq n^{\frac{1}{2}}~ exp(\lambda_{max})$
	
	Note that $d_{R}(g,I) \leq d_{R}(k~exp(X),k) + d_{R}(k,I) \leq d_{R}(exp(X),I) + c = |X| + c$ by 4) of (4.5) and (5.5).  This proves the second inequality of (1), and the first inequality of (1) has a similar proof.  Let $\lambda_{1},~...~, \lambda_{n}$ be the eigenvalues of X. Since $g = k~exp(X)$ we have $|g|^{2} = trace~g^{t}g = trace~exp(2X) = \sum_{i=1}^{n}~exp(2 \lambda_{i})$.  Assertion (2) now follows immediately.
\end{proof}

\section {Minimal vectors}
	
	For a more complete discussion of the material in this section, see section 4 of [RS].
	
	A vector v $\in\br^{n}$ is $\mathit{minimal}$ for the G action if $|g(v)| \geq |v|$ for all g $\in$ G. Let $\fM$ denote the set of vectors in $\br^{n}$ that are minimal for G.  Note that 0 is always minimal and $\fM$ is invariant under $K = G~\cap~O(n,\br)$ since $|v| = |k(v)|$ for all v $\in \br^{n}$ and all k $\in$ K.
	
\begin{lemma}  Let G be a connected, closed, self adjoint, noncompact subgroup of $GL(n,\br)$.  Let v be a nonzero minimal vector.  Let $G_{v} = \{g \in G : g(v) = v \}$, and let $\fG_{v}$ denote the Lie algebra of $G_{v}$.  Then

	1)  $G_{v}^{t} = G_{v}$.
	
	2)  $\fG_{v} = \fK_{v} \oplus \fP_{v}$, where $\fK_{v} = \fK~\cap~\fG_{v}$ and $\fP_{v} = \fP~\cap~\fG_{v}$.
\end{lemma}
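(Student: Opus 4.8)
The plan is to deduce both assertions from a single fact: for every $g \in G_{v}$, the symmetric factor in the KP decomposition (5.4) of $g$ already fixes $v$. Granting this, assertion 1) is essentially immediate, and assertion 2) follows from the elementary observation that a self adjoint subspace of $M(n,\br)$ splits into its intersections with $\fK$ and $\fP$. So the real content is the claim about the KP factor, which I would prove by a convexity argument using the minimality of $v$.

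Concretely, I would fix $g \in G_{v}$ and write $g = k\, exp(X)$ with $k \in K$ and $X \in \fP$ as in (5.4). Since $g(v) = v$ and $k$ is orthogonal, $|exp(X)(v)| = |k^{-1}(v)| = |v|$. Writing $v = \sum_{\lambda} v_{\lambda}$ for the orthogonal decomposition of $v$ into eigenvectors of the symmetric matrix $X$, the function $\phi(t) = |exp(tX)(v)|^{2} = \sum_{\lambda} e^{2\lambda t}\,|v_{\lambda}|^{2}$ is a finite sum of convex terms, hence convex, and it is real-analytic. Minimality of $v$, applied to $exp(tX) \in G$, gives $\phi(t) \geq \phi(0) = |v|^{2}$ for every $t$, while the identity above gives $\phi(1) = |v|^{2} = \phi(0)$. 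A convex function that attains its minimum value at two distinct points is constant between them, so $\phi$ is constant on $[0,1]$ and therefore, by analyticity, constant on $\br$; since the exponentials $t \mapsto e^{2\lambda t}$ for distinct $\lambda$ are linearly independent and $v \neq 0$, this forces $v_{\lambda} = 0$ for all $\lambda \neq 0$, i.e. $X(v) = 0$.

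With $X(v) = 0$ in hand, $exp(X)$ fixes $v$, hence $k(v) = k\, exp(X)(v) = g(v) = v$; since $X^{t} = X$ and $k^{t} = k^{-1}$, we get $g^{t} = exp(X)\,k^{-1}$, a product of two elements of $G$ each fixing $v$, so $g^{t} \in G_{v}$, proving 1). For 2), I would first note that $\fG_{v} = \{Y \in \fG : Y(v) = 0\}$ is self adjoint: if $Y \in \fG_{v}$ then $exp(tY)(v) = v$, so $exp(tY) \in G_{v}$ for all $t$, whence by 1) $exp(tY^{t}) = exp(tY)^{t} \in G_{v}$ for all $t$, and differentiating at $0$ gives $Y^{t} \in \fG_{v}$. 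Then for any $Y \in \fG_{v}$ the decomposition $Y = \frac{1}{2}(Y - Y^{t}) + \frac{1}{2}(Y + Y^{t})$ has both summands in the subspace $\fG_{v}$, the first skew and the second symmetric, hence lying in $\fK \cap \fG_{v} = \fK_{v}$ and $\fP \cap \fG_{v} = \fP_{v}$ respectively; thus $\fG_{v} = \fK_{v} + \fP_{v}$, and the sum is orthogonal, hence direct, by 2) of (4.2).

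The main obstacle is the convexity step. Everything after $X(v) = 0$ is formal manipulation of the $\fK \oplus \fP$ splitting, but obtaining $X(v) = 0$ rests on the key observation that minimality of $v$ together with $|exp(X)(v)| = |v|$ pins the convex function $\phi$ to its minimum at the two points $t = 0$ and $t = 1$, forcing $\phi$ to be constant; linear independence of exponentials then eliminates every nonzero eigencomponent of $v$. This is the differential-geometric substitute for the algebraic arguments of [RS].
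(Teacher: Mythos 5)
Your proposal is correct and follows essentially the same route as the paper: take the KP decomposition $g = k\exp(X)$, use convexity of $t \mapsto |\exp(tX)(v)|^{2}$ together with minimality of $v$ and $|\exp(X)(v)| = |v|$ to force $X(v) = 0$, then deduce $g^{t} = \exp(X)k^{-1} \in G_{v}$ and split $\fG_{v}$ into skew and symmetric parts. The only cosmetic difference is how you extract $X(v) = 0$ from constancy of $\phi$ — you invoke analyticity and linear independence of exponentials, whereas the paper simply notes $f_{X}''(0) = 4|X(v)|^{2}$ must vanish; both are fine.
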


\begin{proof}  1)  Let g $\in G_{v}$ be given, and write $g = kexp(X)$, where k $\in$ K and X $\in \fP$.  If $f_{X}(t) = |exp(tX)(v)|^{2}$, then $f_{X}(0) = |v|^{2} = |g(v)|^{2} = |exp(X)(v)|^{2} = f_{X}(1)$.  Now $f_{X}'(0) = 0$ since v is minimal, and $f_{X}''(t) = 4|Xexp(tX)(v)|^{2} \geq 0$ for all t $\in \br$.  Since $f_{X}(1) = f_{X}(0)$ it follows that $0 = f_{X}''(0) = 4|X(v)|^{2}$.  This proves that exp(X)(v) $=$ v, and it follows that $k(v) = g~exp(X)^{-1}(v) = v$.  Hence $g^{t} = exp(X)^{t} k^{t} = exp(X) k^{-1} \in G_{v}$.

	2)  Note that $\fG_{v}^{t} = \fG_{v}$ by 1).  The assertion now follows as in the proof of 1) of (4.5).
\end{proof}

$\mathit{Moment~map}$

    For a fixed nonzero vector v of $\br^{n}$ the map $X \rightarrow \langle X(v) , v \rangle$ is a linear functional on $\fG$, and hence there exists
a unique vector m(v) $\in \fG$ such that $\langle m(v) , X \rangle = \langle X(v) , v \rangle$ for all X $\in \fG$.  Here $\langle m(v) , X \rangle = trace~ m(v)X^{t}$ as usual.  The map m : V $\rightarrow \fG$ is called the $\mathit{moment~ map}$ determined by G.  Note that m takes its values in $\fP$ by 1) of (4.5) ; $\langle m(v), X \rangle = \langle X(v) , v \rangle = 0$ if X $\in \fK$ since X is skew symmetric.

    The next result follows immediately from (4.3) iii) of [RS].

\begin{proposition}Let G be a connected, closed, self adjoint, noncompact subgroup of $GL(n,\br)$.  Then

    1)  A nonzero vector v of $\br^{n}$ is minimal $\Leftrightarrow m(v) = 0$.

    2)  If a nonzero vector v of $\br^{n}$ is minimal, then $G(v)~\cap~\fM = K(v)$.
\end{proposition}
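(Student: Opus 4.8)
The plan is to reduce both assertions to the behaviour of the single real-variable function $f_{X}(t) = |exp(tX)(v)|^{2}$ for $X \in \fP$. Two ingredients drive everything. First, the KP decomposition (5.4): any $g \in G$ can be written $g = k\, exp(X)$ with $k \in K$ and $X \in \fP$, and since $k \in O(n,\br)$ preserves the Euclidean norm we have $|g(v)| = |exp(X)(v)|$; thus minimality of $v$ is equivalent to $|exp(X)(v)| \geq |v|$ for all $X \in \fP$. Second, a direct computation gives $f_{X}'(t) = 2\langle X\, exp(tX)(v), exp(tX)(v)\rangle$ and, using $X^{t} = X$, $f_{X}''(t) = 4|X\, exp(tX)(v)|^{2} \geq 0$; so each $f_{X}$ is convex, and $f_{X}'(0) = 2\langle X(v), v\rangle = 2\langle m(v), X\rangle$ by the defining property of the moment map.

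For assertion 1), first suppose $v$ is minimal. Then $F_{v}\colon g \mapsto |g(v)|^{2}$ attains its minimum over $G$ at the identity, so for every $X \in \fG$ the curve $t \mapsto F_{v}(exp(tX))$ has derivative zero at $t = 0$; this derivative equals $2\langle X(v), v\rangle = 2\langle m(v), X\rangle$, so $\langle m(v), X\rangle = 0$ for all $X \in \fG$, and since $m(v) \in \fP \subset \fG$ this forces $m(v) = 0$. Conversely, if $m(v) = 0$ then $\langle X(v), v\rangle = 0$ for every $X \in \fG$, in particular for $X \in \fP$; writing $g = k\, exp(X)$ as above, $|g(v)|^{2} = f_{X}(1)$, and the convex function $f_{X}$ with $f_{X}'(0) = 2\langle X(v), v\rangle = 0$ satisfies $f_{X}(1) \geq f_{X}(0) = |v|^{2}$. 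Hence $v$ is minimal.

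For assertion 2), the inclusion $K(v) \subset G(v) \cap \fM$ is immediate: $K(v) \subset G(v)$, and $\fM$ is $K$-invariant with $v \in \fM$. For the reverse inclusion let $w = g(v)$ be minimal; write $g = k\, exp(X)$ with $k \in K$, $X \in \fP$, and put $w' = exp(X)(v) = k^{-1}(w)$, which is minimal by $K$-invariance of $\fM$. Since $v$ is minimal and $w' \in G(v)$ we get $f_{X}(1) = |w'|^{2} \geq |v|^{2} = f_{X}(0)$; since $w'$ is minimal and $v = exp(-X)(w') \in G(w')$ we get $f_{X}(0) \geq f_{X}(1)$; hence $f_{X}(1) = f_{X}(0)$. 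Now $f_{X}$ is convex with $f_{X}'(0) = 0$, so $f_{X}(t) \geq f_{X}(0)$ for all $t$, while convexity gives $f_{X}(t) \leq (1-t)f_{X}(0) + t f_{X}(1) = f_{X}(0)$ on $[0,1]$; therefore $f_{X}$ is constant on $[0,1]$, so $f_{X}''$ vanishes there and in particular $0 = f_{X}''(0) = 4|X(v)|^{2}$. Thus $X(v) = 0$, hence $exp(X)(v) = v$, i.e. $w = k(v) \in K(v)$.

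The argument is largely routine once the KP decomposition and the convexity of $f_{X}$ are available; the one step needing a little care — and it is exactly the mechanism already used in the proof of Lemma (6.1) — is the endpoint convexity argument in assertion 2), namely that a convex function with vanishing derivative at $0$ and equal values at $0$ and $1$ must be constant on $[0,1]$. This also yields a self-contained proof in place of the appeal to (4.3) iii) of [RS].
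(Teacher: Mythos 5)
Your proof is correct.  The paper itself gives no proof of this proposition; it simply notes that the result ``follows immediately from (4.3) iii) of [RS].''  What you have done is provide a self-contained argument using exactly the machinery the paper has already built: the KP decomposition (5.4), the observation that $m(v)\in\fP$, and the convexity of $f_{X}(t)=|\exp(tX)(v)|^{2}$ for $X\in\fP$ with $f_{X}'(0)=2\langle m(v),X\rangle$ and $f_{X}''(t)=4|X\exp(tX)(v)|^{2}\ge 0$.  Assertion 1) reduces to the fact that a convex function with vanishing derivative at $0$ has a global minimum there, together with nondegeneracy of the trace form on $\fG$ (here you correctly use $m(v)\in\fP\subset\fG$).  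Assertion 2) reduces to the two-sided inequality $f_{X}(0)\ge f_{X}(1)\ge f_{X}(0)$ forcing $f_{X}$ to be constant on $[0,1]$, hence $f_{X}''(0)=4|X(v)|^{2}=0$; this is the same endpoint-convexity mechanism the paper uses inside the proof of Lemma (6.1), as you observe, so your argument sits naturally alongside the existing text.  The trade-off is clear: the paper's version is shorter but imports a black-box result from Richardson--Slodowy, while yours is longer but keeps the development internal and elementary, which is consistent with the paper's stated aim (Remark 2 after (2.5)) of giving ``differential geometric'' and ``more elementary'' proofs than [RS].  One tiny point worth tightening if you want to be fastidious: from $f_{X}$ constant on $[0,1]$ you conclude $f_{X}''(0)=0$; since $f_{X}$ is real-analytic this is immediate (it then vanishes identically), but a reader expecting a one-sided derivative argument at the endpoint might appreciate a word to that effect, or the cleaner observation that $\int_{0}^{1}(1-s)f_{X}''(s)\,ds=f_{X}(1)-f_{X}(0)-f_{X}'(0)=0$ with a nonnegative integrand.
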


	We now relate minimal vectors to closed orbits of G in the next result and its converse in (10.2).
	
\begin{proposition}   Let G denote a closed, noncompact subgroup of $GL(n,\br)$, and let v $\in \br^{n}$.  If the orbit G(v) is closed in $\br^{n}$, then G(v) contains a minimal vector.
\end{proposition}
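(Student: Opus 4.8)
The plan is to find the minimal vector inside the closed orbit $G(v)$ by minimizing the norm. Consider the function $F_v : G \to \br$ given by $F_v(g) = |g(v)|^2$, or equivalently the function $w \mapsto |w|^2$ restricted to the orbit $G(v)$. First I would observe that since $G(v)$ is closed in $\br^n$, it is in particular a closed subset, so the continuous function $w \mapsto |w|^2$ attains its infimum on the intersection of $G(v)$ with any closed ball $\overline{B}(0,\rho)$; choosing $\rho$ large enough that this intersection is nonempty, and noting that points of $G(v)$ outside the ball have norm $> \rho$, the infimum over all of $G(v)$ is attained at some point $w = g_0(v) \in G(v)$. Then for every $g \in G$ we have $|g(w)| = |(g g_0)(v)| \geq |w|$, since $g g_0 \in G$ and $w$ realizes the minimum norm on the orbit. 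Hence $w$ is a minimal vector lying in $G(v)$.

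The one point that needs care is the nonemptiness of $G(v) \cap \overline{B}(0,\rho)$ and, more importantly, that the infimum of $|w|$ over the orbit is not approached only ``at infinity'' in a way that escapes $G(v)$ — but this is exactly where closedness of the orbit is used. Let $\{w_k = g_k(v)\}$ be a sequence in $G(v)$ with $|w_k| \to \inf_{w \in G(v)} |w|$. This sequence is bounded in $\br^n$, hence has a convergent subsequence $w_{k_j} \to w_\infty$ with $|w_\infty| = \inf_{w \in G(v)} |w|$. Since $G(v)$ is closed, $w_\infty \in G(v)$, so $w_\infty = g_0(v)$ for some $g_0 \in G$. Now $w_\infty$ is the desired minimal vector, exactly as above.

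I do not expect any serious obstacle here: the argument is a routine ``attain the infimum on a closed set'' compactness argument, and the only structural input is that a closed orbit is a closed subset of $\br^n$ together with the fact that $G$ acts transitively on $G(v)$, so that minimality of $w_\infty$ within the orbit is equivalent to the displacement inequality $|g(w_\infty)| \geq |w_\infty|$ for all $g \in G$. Note that we do not even need $G$ to be self adjoint or connected for this direction; only that $G$ is a subgroup of $GL(n,\br)$ acting on $\br^n$ with $G(v)$ closed, which matches the stated hypotheses of the proposition. (The self adjointness and the refined description of the minimal set via $K \cdot G_v$ enter only in the converse direction and in the finer results of later sections.)
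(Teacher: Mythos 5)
Your argument is essentially identical to the paper's: both minimize $|w|$ over the closed orbit $G(v)$, extract a convergent subsequence of a minimizing sequence $g_k(v)$, and use closedness to conclude the limit lies in $G(v)$ and is minimal. The only addition is that you spell out explicitly why a norm-minimizer $w_\infty = g_0(v)$ in the orbit satisfies $|g(w_\infty)| \geq |w_\infty|$ for all $g \in G$ (via $g g_0 \in G$), a step the paper leaves implicit.
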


\begin{proof}  Let c $= inf \{|g(v)| : g \in G \}$, and let $\{g_{k} \}$ be a sequence in G such that $|g_{k}(v)| \rightarrow c$ as $k \rightarrow \infty$.  Since the sequence $\{g_{k}(v) \}$ is bounded there exists a vector w $\in \br^{n}$ such that $g_{k}(v) \rightarrow w$, passing to a subsequence if necessary.  By continuity $|w| = c$,  and  w $\in G(v)$ since G(v) is closed.  Hence w is a minimal vector in G(v).
\end{proof}

$\mathit{Remark}$  In the proof of the main result (8.1) the function $g \rightarrow d_{R}(g, K\cdot G_{v})$ plays a major role.  The next result shows the geometric significance of the set $K \cdot G_{v}$.
	
\begin{proposition}  Let v be a nonzero minimal vector, and let $F_{v} : G \rightarrow \br$ be the function given by $F_{v}(g) = |g(v)|^{2}$.  The minimum locus for $F_{v}$ is the set $K \cdot G_{v}$, which is also the set of critical points for $F_{v}$.
\end{proposition}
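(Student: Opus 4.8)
The plan is to prove two assertions: that $F_v$ attains its minimum value $|v|^{2}$ exactly on $K \cdot G_v$, and that the critical set of $F_v$ is also exactly $K \cdot G_v$. Since $v$ is minimal we have $F_v(g) = |g(v)|^{2} \geq |v|^{2}$ for every $g \in G$, with equality for $g \in K$ (in particular for $g = I$), so $\min F_v = |v|^{2}$ and the minimum locus is $\Sigma := \{g \in G : |g(v)| = |v|\}$. The inclusion $K \cdot G_v \subseteq \Sigma$ is immediate: if $g = kh$ with $k \in K$ and $h \in G_v$, then $g(v) = k(h(v)) = k(v)$ and $|k(v)| = |v|$ because $K \subset O(n,\br)$.

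For the reverse inclusion I would argue that if $g \in \Sigma$, then $g(v)$ is itself a minimal vector, since $|h(g(v))| = |(hg)(v)| \geq |v| = |g(v)|$ for all $h \in G$. Hence $g(v) \in G(v) \cap \fM$, and this set equals $K(v)$ by 2) of (6.2), which applies because $v$ is minimal. Thus $g(v) = k(v)$ for some $k \in K$, so $k^{-1}g \in G_v$ and $g \in K \cdot G_v$. (Alternatively one can invoke the KP decomposition (5.4), write $g = k\,exp(X)$ with $X \in \fP$, and rerun the convexity argument from the proof of (6.1): $f_X(t) = |exp(tX)(v)|^{2}$ is convex with $f_X'(0) = 2\langle m(v), X\rangle = 0$, so $f_X(1) = f_X(0)$ forces $f_X'' \equiv 0$ on $[0,1]$, hence $X(v) = 0$ and $exp(X) \in G_v$.) This establishes $\Sigma = K \cdot G_v$.

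It remains to identify the critical set. Minimum points are always critical, so $K \cdot G_v = \Sigma$ is contained in the critical set and only the reverse inclusion is needed. Fix $g \in G$. For $A \in \fG$ the curve $t \mapsto exp(tA)\,g$ lies in $G$ by (4.4), and as $A$ ranges over $\fG$ the velocities $\frac{d}{dt}|_{t=0}\,exp(tA)\,g$ span $T_g G$. By the defining property of the moment map,
\[
\frac{d}{dt}\,F_v(exp(tA)\,g)\Big|_{t=0} = 2\langle A(g(v)),\, g(v)\rangle = 2\langle m(g(v)),\, A\rangle.
\]
Hence $g$ is a critical point of $F_v$ if and only if $\langle m(g(v)), A\rangle = 0$ for all $A \in \fG$, i.e. $m(g(v)) = 0$; by 1) of (6.2) this says precisely that $g(v)$ is minimal, and then, as in the previous paragraph, $g(v) \in K(v)$ by 2) of (6.2), so $g \in K \cdot G_v$. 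Combining the two halves, the critical set of $F_v$ equals $K \cdot G_v$, which equals the minimum locus $\Sigma$.

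I do not expect a serious obstacle; the proof is an assembly of (5.4), (6.1) and (6.2). The point requiring the most care is the gradient computation: one must use the curves $exp(tA)g$ coming from the left action $g \mapsto g(v)$ — equivalently, right invariant vector fields — so that the differential of $F_v$ is expressed through the moment map $m$ of $G$, and one must remember that 2) of (6.2) is being applied at the base vector $v$, which is minimal by hypothesis, and not at $g(v)$.
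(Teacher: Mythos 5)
Your proof is correct and follows essentially the same route as the paper: both rest on the gradient/moment-map identity $dF_v(g)\cdot\xi = 2\langle m(g(v)), A\rangle$ (equivalently $\mathrm{grad}\,F_v(g) = 2(R_g)_*m(g(v))$) together with parts 1) and 2) of (6.2). The only difference is organizational — you give a self-contained argument that the minimum locus equals $K \cdot G_v$ (via the observation that $|g(v)| = |v|$ forces $g(v)$ to be minimal) before turning to critical points, whereas the paper characterizes the critical set first and then sandwiches the minimum locus using the trivial inclusions; this is a cosmetic, not substantive, difference.
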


\begin{proof}  Equip G with the canonical right invariant Riemannian structure $\langle , \rangle$.  A routine computation shows that grad $F_{v}(g) = 2~(R_{g})_{*}~m(g(v))$, where m : $\br^{n} \rightarrow \fG \approx T_{I}G$ is the moment map. By 1) of (6.2) g is a critical point of $F_{v} \Leftrightarrow$ g(v) is a minimal vector.  By 2) of (6.2) g(v) is a minimal vector $\Leftrightarrow g \in K \cdot G_{v}$, which by inspection is contained in the minimum locus of $F_{v}$.
\end{proof}
	
	The next result is a useful companion to the main result, (8.1).
	
\begin{proposition}  Let G denote a closed, connected, noncompact subgroup of $GL(n,\br)$, and let v be a nonzero vector of $\br^{n}$.  Then the following are equivalent.

	1)  The orbit G(v) is bounded in $\br^{n}$.
	
	2)  The function $g \rightarrow d_{R}(g, K \cdot G_{v})$ is bounded on G.
	
	3)  $G = K \cdot G_{v}$
	
	4)  $X(v) = 0$ for any X $\in \fP$.

    5)  $\fG = \fK + \fG_{v}$.
\end{proposition}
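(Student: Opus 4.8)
The plan is to prove the chain of implications $1)\Rightarrow 4)\Rightarrow 5)\Rightarrow 3)\Rightarrow 2)\Rightarrow 1)$, with the whole argument resting on the extended $KP$ decomposition (5.6) together with the convexity computation already used in (6.1).

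First I would show $1)\Rightarrow 4)$. Fix $X\in\fP$ and set $f_{X}(t)=|exp(tX)(v)|^{2}$. Since $exp(tX)\in G$ for all $t$ and $G(v)$ is bounded, $f_{X}$ is a bounded function on $\br$; and since $X$ is symmetric, $f_{X}''(t)=4\,|X\,exp(tX)(v)|^{2}\geq 0$, so $f_{X}$ is convex. A bounded convex function on $\br$ is constant, hence $0=f_{X}''(0)=4\,|X(v)|^{2}$ and $X(v)=0$. Next, $4)\Rightarrow 5)$ is pure linear algebra: condition 4) says $\fP\subset\fG_{v}$, and since $\fG=\fK\oplus\fP$ by 1) of (4.5), we get $\fG=\fK+\fP\subset\fK+\fG_{v}\subset\fG$, so $\fG=\fK+\fG_{v}$.

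The pivot is $5)\Rightarrow 3)$: condition 5) says precisely that $\widetilde{\fP_{v}}=(\fK+\fG_{v})^{\perp}=\{0\}$. Applying (5.6), every $g\in G$ can be written $g=k\,exp(X)\,h$ with $k\in K$, $h\in G_{v}$ and $X\in\widetilde{\fP_{v}}=\{0\}$; hence $g=kh\in K\cdot G_{v}$ and $G=K\cdot G_{v}$. The implication $3)\Rightarrow 2)$ is immediate, since $G=K\cdot G_{v}$ forces $d_{R}(g,K\cdot G_{v})=0$ for all $g$. Finally, for $2)\Rightarrow 1)$, suppose $d_{R}(g,K\cdot G_{v})\leq C$ on $G$. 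By (5.6) write $g=k\,exp(X)\,h$ with $k\in K$, $h\in G_{v}$, $X\in\widetilde{\fP_{v}}\subset\fP$ and $|X|=d_{R}(g,K\cdot G_{v})\leq C$. Then $g(v)=k\,exp(X)(v)$, so $|g(v)|=|exp(X)(v)|$; since $X=X^{t}$, its largest eigenvalue $\lambda_{max}$ satisfies $\lambda_{max}\leq|X|\leq C$, so the operator norm of the positive definite matrix $exp(X)$ is $exp(\lambda_{max})\leq exp(C)$, and therefore $|g(v)|\leq exp(C)\,|v|$ for every $g\in G$. Thus $G(v)$ is bounded.

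There is no genuinely hard step here: essentially all of the analytic content is already contained in (5.6), and the role of this proposition is to assemble it. The only points requiring a little care are the passage $2)\Rightarrow 1)$, where one must bound $|exp(X)(v)|$ in terms of $|X|$ by diagonalizing the symmetric matrix $X$, and the observation that condition 5) is exactly the vanishing of $\widetilde{\fP_{v}}$, which is what converts (5.6) into the identity $G=K\cdot G_{v}$.
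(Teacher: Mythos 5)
Your proof is correct, and it reorganizes the argument in a way that is genuinely different from the paper. The paper proves the first four conditions in the cycle $1)\Rightarrow 4)\Rightarrow 3)\Rightarrow 2)\Rightarrow 1)$ and then attaches condition 5) by a separate two-way loop $4)\Rightarrow 5)\Rightarrow 4)$; the step $5)\Rightarrow 4)$ there requires showing first that $v$ is minimal (via the moment map and 1) of (6.2)) so that $\fG_{v}=\fK_{v}\oplus\fP_{v}$ by (6.1), and only then concluding $\fP=\fP_{v}$. You instead fold 5) into a single cycle $1)\Rightarrow 4)\Rightarrow 5)\Rightarrow 3)\Rightarrow 2)\Rightarrow 1)$, which eliminates the moment-map detour entirely. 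The pivot is your observation that 5) is exactly the vanishing of $\widetilde{\fP_{v}}=(\fK+\fG_{v})^{\perp}$, so that the extended decomposition (5.6) immediately collapses $g=k\,exp(X)\,h$ to $g=kh\in K\cdot G_{v}$. By contrast, the paper proves $4)\Rightarrow 3)$ using the ordinary KP decomposition (5.4) rather than (5.6), so there both implications into 3) rely only on the simpler tool; your route concentrates everything on (5.6). The steps you share with the paper --- the convexity argument for $1)\Rightarrow 4)$, the linear algebra for $4)\Rightarrow 5)$, and the eigenvalue bound for $2)\Rightarrow 1)$ --- are essentially identical, differing only in that you phrase the last one in terms of the operator norm of $exp(X)$ rather than expanding $v$ over the eigenspaces of $X$. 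One caveat shared by both proofs: the stated hypothesis omits ``self adjoint,'' but both you and the paper invoke the KP decomposition, which requires it, so self-adjointness is evidently intended.
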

	
\noindent If G has no nontrivial compact, connected, normal subgroups, then G fixes v if any one of the conditions above holds.

	See Appendix II for the proof of (6.5).
\newline

	Next we investigate the growth functions $\lambda^{-}(v)$ and $\lambda^{+}(v)$ that appear in the statement of the main result, (8.1).

\section{The growth exponents $\lambda^{-}(v)$ and $\lambda^{+}(v)$}

	In this section let G be a closed, connected, self adjoint, noncompact subgroup of $GL(n,\br)$

	For each X $\in \fP, \br^{n}$ is an orthogonal direct sum of the eigenspaces of X.  For a nonzero element v of $\br^{n}$ and a nonzero element X of $\fP$ we define $\lambda_{X}(v)$ to be the largest eigenvalue $\lambda$ of X for which v has a nonzero component in $V_{\lambda} = \{w \in \br^{n} : X(w) = \lambda w \}$.

	For a nonzero element v of $\br^{n}$ we define
	
	$\lambda^{-}(v) = inf~\{\lambda_{X}(v) : X \in \widetilde{\fP_{v}}~\rm{and}~|X| = 1 \}$
	
	  $\lambda^{+}(v) = sup~\{\lambda_{X}(v) : X \in \widetilde{\fP_{v}}~\rm{and}~|X| = 1 \}$

where $\widetilde{\fP_{v}} = (\fK + \fG_{v})^{\perp} \subset \fP$.
\newline
	
\noindent $\mathit{Remark}$

	If G(v) is unbounded, then $\widetilde{\fP_{v}} \neq 0$ by 5) of (6.5).  It follows by continuity that $\widetilde{\fP_{w}} \neq 0$ for all w in some neighborhood O of v in $\br^{n}$, and hence G(w) is unbounded for all w $\in$ O by (6.5).

	The next result gives a dynamical definition of $\lambda_{X}(v)$, and it suggests why the main result could be true.

\begin{proposition}  Let X and v be nonzero elements of $\fP$ and $\br^{n}$ respectively.  Then $\lambda_{X}(v) = lim~_{t \rightarrow \infty} \frac{log~|exp(tX)(v)|}{t}$.
\end{proposition}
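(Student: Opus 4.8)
The plan is to diagonalize $X$ and follow the growth of each eigencomponent of $v$ separately. Since $X \in \fP$ is symmetric, $\br^{n}$ decomposes as an orthogonal direct sum $\br^{n} = \bigoplus_{\lambda} V_{\lambda}$ of the eigenspaces $V_{\lambda} = \{w \in \br^{n} : X(w) = \lambda w\}$ of $X$. Writing $v = \sum_{\lambda} v_{\lambda}$ with $v_{\lambda}$ the orthogonal projection of $v$ onto $V_{\lambda}$, the one-parameter group acts diagonally, $\exp(tX)(v) = \sum_{\lambda} e^{t\lambda} v_{\lambda}$, so by orthogonality $|\exp(tX)(v)|^{2} = \sum_{\lambda} e^{2t\lambda} |v_{\lambda}|^{2}$.

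Next I would isolate the dominant term. Let $\mu = \lambda_{X}(v)$ be the largest eigenvalue $\lambda$ of $X$ with $v_{\lambda} \neq 0$; this exists, and $v_{\mu} \neq 0$, precisely by the definition of $\lambda_{X}(v)$ together with the hypothesis $v \neq 0$. Factoring out $e^{2t\mu}$ gives $|\exp(tX)(v)|^{2} = e^{2t\mu}\bigl(|v_{\mu}|^{2} + \sum_{\lambda < \mu} e^{2t(\lambda - \mu)} |v_{\lambda}|^{2}\bigr)$. Every exponent $\lambda - \mu$ occurring in the remaining sum is strictly negative, so the parenthesized quantity is a continuous function of $t$ that decreases toward $|v_{\mu}|^{2} > 0$ as $t \to \infty$; in particular it lies in a fixed compact subinterval of $(0, \infty)$ for all $t$ sufficiently large.

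Finally, take logarithms and divide by $t$: for $t > 0$,
\[
\frac{\log |\exp(tX)(v)|}{t} = \mu + \frac{1}{2t} \log\Bigl(|v_{\mu}|^{2} + \sum_{\lambda < \mu} e^{2t(\lambda - \mu)} |v_{\lambda}|^{2}\Bigr).
\]
The argument of the logarithm stays bounded above and bounded away from $0$, so the logarithm is bounded, and the factor $\tfrac{1}{2t}$ forces the second summand to $0$ as $t \to \infty$. Hence $\lim_{t \to \infty} \frac{\log |\exp(tX)(v)|}{t} = \mu = \lambda_{X}(v)$. There is no serious obstacle in this argument; the only point requiring care is the use of $v_{\mu} \neq 0$, which guarantees both that $\log |\exp(tX)(v)|$ is eventually defined and that the lower bound on the parenthesized term is genuinely positive rather than zero.
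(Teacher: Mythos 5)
Your proof is correct and follows essentially the same approach as the paper: orthogonal eigenspace decomposition, the explicit formula $|\exp(tX)(v)|^{2} = \sum_{\lambda} e^{2t\lambda}|v_{\lambda}|^{2}$, and isolation of the top eigenvalue $\mu = \lambda_{X}(v)$. The paper sandwiches $|\exp(tX)(v)|^{2}$ between $e^{2t\mu}|v_{\mu}|^{2}$ and $e^{2t\mu}|v|^{2}$ and takes $\tfrac{1}{t}\log$ of both bounds, whereas you factor out $e^{2t\mu}$ and observe directly that the residual factor stays in a compact subinterval of $(0,\infty)$; these are the same estimate written in two equivalent ways.
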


\begin{proof}  Write $v = \sum_{i=1}^{N} v_{i}$, where $X(v_{i}) = \lambda_{i} v_{i}$ for some real numbers $\{\lambda_{1}, ... , \lambda_{N} \}$.  Choose i so that $v_{i} \neq 0$ and $\lambda_{i} = \lambda_{X}(v)$.  Then $exp(tX)(v) = \sum_{k=1}^{N} exp(t \lambda_{k})(v_{k})$, and it follows that
$|exp(tX)(v)|^{2} = \sum_{k=1}^{N} exp(2t \lambda_{k}) |v_{k}|^{2}$.  Hence $exp(2t \lambda_{X}(v)) |v_{i}|^{2} \leq |exp(tX)(v)|^{2} \leq exp(2t \lambda_{X}(v) |v|^{2}$, and the assertion now follows immediately.
\end{proof}

\begin{proposition}  Let X and v be nonzero elements of $\fP$ and $\br^{n}$ respectively, and let g $\in GL(n,\br)$ be an element such that $gX = Xg$.  Then $\lambda_{X}(v) = \lambda_{X}(g(v))$.
\end{proposition}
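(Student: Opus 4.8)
The plan is to exploit the fact that, since $X \in \fP$ is symmetric, $\br^{n}$ splits as the orthogonal direct sum of the eigenspaces of $X$ attached to its distinct eigenvalues $\lambda_{1}, \dots, \lambda_{N}$, and that an element $g$ commuting with $X$ must preserve this decomposition. First I would record that $gX = Xg$ implies $g(V_{\lambda}) \subseteq V_{\lambda}$ for every eigenvalue $\lambda$ of $X$: if $X(w) = \lambda w$, then $X(g(w)) = g(X(w)) = \lambda g(w)$, so $g(w) \in V_{\lambda}$. Since $g \in GL(n,\br)$ is injective and fixes each summand of $\br^{n} = \bigoplus_{i=1}^{N} V_{\lambda_{i}}$, the restriction $g|_{V_{\lambda_{i}}} : V_{\lambda_{i}} \to V_{\lambda_{i}}$ is injective, hence bijective, for each $i$.

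Next, writing $v = \sum_{i=1}^{N} v_{i}$ with $v_{i} \in V_{\lambda_{i}}$, we have $g(v) = \sum_{i=1}^{N} g(v_{i})$ with $g(v_{i}) \in V_{\lambda_{i}}$, and this is precisely the eigenspace decomposition of $g(v)$ relative to $X$. Because each $g|_{V_{\lambda_{i}}}$ is bijective, $v_{i} = 0$ if and only if $g(v_{i}) = 0$. Hence $v$ and $g(v)$ have nonzero components in exactly the same eigenspaces of $X$, and by the definition of $\lambda_{X}(\cdot)$ as the largest eigenvalue $\lambda$ of $X$ for which the relevant vector has a nonzero component in $V_{\lambda}$, we conclude $\lambda_{X}(v) = \lambda_{X}(g(v))$.

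As an alternative I could deduce the statement from the dynamical formula $\lambda_{X}(v) = \lim_{t \to \infty} \frac{\log |\exp(tX)(v)|}{t}$ of (7.2): since $g$ commutes with $X$ it commutes with $\exp(tX)$, so $\exp(tX)(g(v)) = g(\exp(tX)(v))$, and the elementary bounds $\|g^{-1}\|^{-1}\,|w| \le |g(w)| \le \|g\|\,|w|$ force the two limits to agree. I would present the eigenspace argument as the main proof, since it is self-contained and does not require (7.2).

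There is essentially no serious obstacle here; the one point deserving a line of justification rather than being taken for granted is that $g$ acts \emph{invertibly} on each eigenspace $V_{\lambda_{i}}$ separately — this is exactly where invertibility of $g$ on all of $\br^{n}$, combined with the fact that $g$ respects the $X$-eigenspace decomposition, is used.
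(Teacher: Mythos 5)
Your main argument is essentially the paper's proof: decompose $v = \sum v_i$ into $X$-eigenvectors, use $gX = Xg$ to see that each $g(v_i)$ remains an eigenvector with the same eigenvalue $\lambda_i$, and conclude from $g(v) = \sum g(v_i)$. You are a bit more careful than the paper in spelling out that invertibility of $g$ forces $v_i \neq 0 \Leftrightarrow g(v_i) \neq 0$ — the paper just asserts "the assertion now follows" — and your alternative via the dynamical formula (which is (7.1), not (7.2)) is a correct additional route, but the core approach is the same.
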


\begin{proof} Write $v = \sum_{i=1}^{N} v_{i}$, where $X(v_{i}) = \lambda_{i} v_{i}$ for some real numbers $\{\lambda_{1}, ... , \lambda_{N} \}$.  If g $\in GL(n,\br)$ is an element such that $gX = Xg$, then $X(g(v_{i})) = g(X(v_{i})) = \lambda_{i} g(v_{i})$ for $1 \leq i \leq N$.  The assertion of the lemma now follows since $g(v) = \sum_{i=1}^{N} g(v_{i})$.
\end{proof}

\begin{proposition}  Let v be a nonzero vector of $\br^{n}$.  Then $\lambda^{-}(v) = \lambda^{-}(k(v))$ for every k $\in K = G \cap O(n,\br)$.
\end{proposition}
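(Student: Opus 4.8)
The plan is to show that conjugation by $k$, i.e.\ the linear map $\mathrm{Ad}(k) : X \mapsto kXk^{-1}$ on $\fG$, carries the unit sphere of $\widetilde{\fP_{v}}$ onto the unit sphere of $\widetilde{\fP_{k(v)}}$ while preserving the quantity $\lambda_{X}(v)$. This identifies the two sets over which the infima defining $\lambda^{-}(v)$ and $\lambda^{-}(k(v))$ are taken, matching the values, so the infima agree. (The same argument gives $\lambda^{+}(v) = \lambda^{+}(k(v))$.)

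First I would record how $\mathrm{Ad}(k)$ interacts with the relevant subspaces. Since $k \in G$ and $\fG$ is the Lie algebra of $G$, we have $\mathrm{Ad}(k)\fG = \fG$. Since $k \in O(n,\br)$, so that $k^{-1} = k^{t}$, the identity $(kXk^{-1})^{t} = kX^{t}k^{-1}$ shows $\mathrm{Ad}(k)$ preserves skew symmetry, hence $\mathrm{Ad}(k)\fK = \fK$; and $\mathrm{Ad}(k)$ is a linear isometry of $(\fG,\langle,\rangle)$ by 1) of (4.2). Finally $g \in G_{k(v)}$ if and only if $k^{-1}gk \in G_{v}$, so $G_{k(v)} = kG_{v}k^{-1}$ and therefore $\fG_{k(v)} = \mathrm{Ad}(k)\fG_{v}$. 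Combining these facts, $\mathrm{Ad}(k)(\fK + \fG_{v}) = \fK + \fG_{k(v)}$, and because $\mathrm{Ad}(k)$ is a linear isometry it sends the orthogonal complement to the orthogonal complement: $\mathrm{Ad}(k)\,\widetilde{\fP_{v}} = \widetilde{\fP_{k(v)}}$, with $|\mathrm{Ad}(k)X| = |X|$ for all $X$.

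Next I would check that $\lambda_{kXk^{-1}}(k(v)) = \lambda_{X}(v)$ for every nonzero $X \in \fP$. Writing $v = \sum_{i=1}^{N} v_{i}$ with $X(v_{i}) = \lambda_{i} v_{i}$, one has $(kXk^{-1})(k v_{i}) = kX(v_{i}) = \lambda_{i}(k v_{i})$, so $k v_{i}$ is a $\lambda_{i}$-eigenvector of $kXk^{-1}$, and $k(v) = \sum_{i=1}^{N} k(v_{i})$ exhibits $k(v)$ with exactly the same eigenvalues $\lambda_{i}$ occurring with nonzero component; hence the largest such eigenvalue is unchanged. Putting the two steps together, $X \mapsto kXk^{-1}$ is a bijection from $\{X \in \widetilde{\fP_{v}} : |X| = 1\}$ onto $\{Y \in \widetilde{\fP_{k(v)}} : |Y| = 1\}$ satisfying $\lambda_{X}(v) = \lambda_{kXk^{-1}}(k(v))$, so the infima coincide. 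The argument is routine; the only point needing care is the identification $\mathrm{Ad}(k)\,\widetilde{\fP_{v}} = \widetilde{\fP_{k(v)}}$, which hinges on $\mathrm{Ad}(k)$ being simultaneously an isometry of $\fG$ and a map carrying $\fK + \fG_{v}$ onto $\fK + \fG_{k(v)}$.
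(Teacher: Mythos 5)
Your proposal is correct and follows essentially the same route as the paper: the paper's proof also uses that $\mathrm{Ad}(k)$ is a linear isometry of $\fG$ (by 1) of (4.2)) carrying $\fK + \fG_{v}$ onto $\fK + \fG_{k(v)}$, hence $\widetilde{\fP_{v}}$ onto $\widetilde{\fP_{k(v)}}$, and then invokes the identity $\lambda_{X}(v) = \lambda_{kXk^{-1}}(k(v))$ (Lemma 7.4) proved by exactly the eigenvector calculation you give. You spell out a few intermediate points (such as $\mathrm{Ad}(k)\fK = \fK$ and $G_{k(v)} = kG_{v}k^{-1}$) that the paper leaves implicit, but the argument is the same.
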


	For k $\in$ K the map $X \rightarrow k X k^{-1}$ is a linear isometry of $\fG$ by 1) of (4.2).  Moreover, $\fK + \fG_{k(v)} = k (\fK +\fG_{v}) k^{-1}$, and this implies that $\widetilde{\fP_{k(v)}} = k (\widetilde{\fP_{v}}) k^{-1}$.  The assertion now follows immediately from the next result

\begin{lemma}  $\lambda_{X}(v) = \lambda_{k X k^{-1}}(k(v))$ for all k $\in$ K, all nonzero X $\in \fP$ and all nonzero v $\in \br^{n}$.
\end{lemma}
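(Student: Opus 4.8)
The plan is to show that conjugation by $k$ carries the eigenspace decomposition of $X$ onto that of $kXk^{-1}$ with eigenvalues unchanged, and then to observe that $k$, being invertible, preserves exactly which eigencomponents of a vector are nonzero. This forces the largest eigenvalue with a nonzero component to be the same on both sides.

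First I would check that the statement is well-posed, i.e. that $kXk^{-1}\in\fP$ whenever $k\in K$ and $X\in\fP$. For $k\in K\subset O(n,\br)$ we have $k^{-1}=k^{t}$, so $(kXk^{-1})^{t}=kX^{t}k^{t}=kXk^{-1}$ since $X^{t}=X$; moreover $kXk^{-1}\in\fG$ because $\exp(t\,kXk^{-1})=k\,\exp(tX)\,k^{-1}\in G$ for every $t\in\br$ (as $G$ is a group containing both $k$ and $\exp(tX)$), whence $kXk^{-1}$ lies in $\fG$ by the definition of the Lie algebra of $G$ in (4.4). Thus $kXk^{-1}\in\fG\cap\{Y\in M(n,\br):Y^{t}=Y\}=\fP$, so $\lambda_{kXk^{-1}}$ is defined.

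Next I would argue exactly as in the proof of (7.1): write $v=\sum_{i=1}^{N}v_{i}$ where $X(v_{i})=\lambda_{i}v_{i}$ for distinct reals $\lambda_{1},\dots,\lambda_{N}$. Applying $k$ and using $(kXk^{-1})\bigl(k(v_{i})\bigr)=k\,X(v_{i})=\lambda_{i}\,k(v_{i})$, we see that $k(v)=\sum_{i=1}^{N}k(v_{i})$ is the decomposition of $k(v)$ into eigenvectors of $kXk^{-1}$ with the \emph{same} eigenvalues $\lambda_{1},\dots,\lambda_{N}$. Since $k$ is a linear isomorphism of $\br^{n}$, $k(v_{i})=0$ if and only if $v_{i}=0$. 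Hence the largest $\lambda_{i}$ for which $v_{i}\neq0$ equals the largest $\lambda_{i}$ for which $k(v_{i})\neq0$; that is, $\lambda_{X}(v)=\lambda_{kXk^{-1}}(k(v))$, which is the assertion. (Alternatively, one can bypass the eigenspace bookkeeping entirely via the dynamical formula (7.1): since $\exp(t\,kXk^{-1})=k\,\exp(tX)\,k^{-1}$ and $k\in O(n,\br)$ preserves norms, $|\exp(t\,kXk^{-1})(k(v))|=|k\,\exp(tX)(v)|=|\exp(tX)(v)|$ for all $t$, so dividing $\log$ by $t$ and letting $t\to\infty$ yields the equality immediately.)

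There is essentially no obstacle in this lemma; it is a routine conjugation computation. The only point that genuinely needs a line of verification is the well-posedness step, namely that $kXk^{-1}$ again belongs to $\fP$ — and even that is immediate from $\fG$ being invariant under $\mathrm{Ad}(k)$ for $k\in K\subset G$ together with $X^{t}=X$ and $k^{-1}=k^{t}$.
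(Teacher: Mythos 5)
Your argument is correct and matches the paper's proof: both decompose $v = \sum_i v_i$ into eigenvectors of $X$ and note that $(kXk^{-1})(k(v_i)) = \lambda_i k(v_i)$, so that $k(v) = \sum_i k(v_i)$ is the eigendecomposition with respect to $kXk^{-1}$ with the same eigenvalues and the same set of nonzero components. Your additional checks — that $kXk^{-1}$ lies in $\fP$ (so the left-hand side is well-defined) and the alternative via (7.1) — are sound refinements, but the core argument is the one the paper uses.
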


\begin{proof}  Given X $\in \fP$, v $\in \br^{n}$ and k $\in$ K we write $v = \sum_{i=1}^{N} v_{i}$, where $X(v_{i}) = \lambda_{i} v_{i}$ for some real numbers $\{\lambda_{1}, ... , \lambda_{N} \}$.   Then $k(v) = \sum_{i=1}^{N} k(v_{i})$ and $(kXk^{-1})(k(v_{i})) = \lambda_{i} k(v_{i})$ for $1 \leq i \leq N$.  The assertion of the lemma now follows from the definitions of $\lambda_{X}(v)$ and $\lambda_{kXk^{-1}}(k(v))$.
\end{proof}

\begin{corollary}  Let v be a nonzero element of $\br^{n}$ such that G(v) is unbounded.  Then $\lambda^{-}(g(v)) \leq \lambda^{+}(v)$ for all g $\in$ G.
\end{corollary}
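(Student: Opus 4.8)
The plan is to bound $\lambda^{-}(g(v))$ from above directly. Put $w = g(v)$. Since $G(w) = G(v)$ is unbounded, $\widetilde{\fP_{w}} \neq \{0\}$ by 5) of (6.5), so the infimum defining $\lambda^{-}(w)$ is taken over a nonempty set of unit vectors; it therefore suffices to prove that $\lambda_{X}(w) \leq \lambda^{+}(v)$ for \emph{every} unit $X \in \widetilde{\fP_{w}} \subseteq \fP$. By the dynamical formula (7.1), $\lambda_{X}(w) = lim_{t \rightarrow \infty}\, t^{-1} log|exp(tX)(w)| = lim_{t \rightarrow \infty}\, t^{-1}log|exp(tX)g(v)|$, so the problem becomes estimating the growth of the ray $t \mapsto exp(tX)g(v)$.

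The main device is to re-express the element $exp(tX)g$ of $G$ by applying the extended KP decomposition (5.6) \emph{relative to $v$} (not to $w$): for each $t$ write $exp(tX)g = k(t)\, exp(Z(t))\, h(t)$ with $k(t) \in K$, $h(t) \in G_{v}$, $Z(t) \in \widetilde{\fP_{v}}$, and $|Z(t)| = d_{R}(exp(tX)g,\, K \cdot G_{v})$. Since $h(t)$ fixes $v$ and $k(t) \in O(n,\br)$ this gives $|exp(tX)g(v)| = |exp(Z(t))(v)|$. Now I would estimate each of the two factors. For the first: when $Z(t) \neq 0$, normalize $\widehat{Z}(t) = Z(t)/|Z(t)| \in \widetilde{\fP_{v}}$ and combine the elementary bound $|exp(Y)(v)|^{2} \leq exp(2\lambda_{Y}(v))|v|^{2}$ from the proof of (7.1) with $\lambda_{Z(t)}(v) = |Z(t)|\,\lambda_{\widehat{Z}(t)}(v)$ and $\lambda_{\widehat{Z}(t)}(v) \leq \lambda^{+}(v)$ to get $|exp(Z(t))(v)| \leq exp(|Z(t)|\,\lambda^{+}(v))\,|v|$ (trivial when $Z(t) = 0$). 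For the second factor: by right invariance of $d_{R}$ and the fact (remark after (5.5)) that $s \mapsto exp(sX)$ is a unit-speed minimizing geodesic of $(G,d_{R})$, one has $d_{R}(exp(tX)g, g) = d_{R}(exp(tX), I) = t$ for $t \geq 0$, so the triangle inequality yields $\bigl|\,|Z(t)| - t\,\bigr| \leq d_{R}(g, K \cdot G_{v}) =: C_{0}$, independent of $t$. Combining these gives $log|exp(tX)g(v)| \leq t\,\lambda^{+}(v) + C_{0}|\lambda^{+}(v)| + log|v|$; dividing by $t$ and letting $t \rightarrow \infty$ gives $\lambda_{X}(w) \leq \lambda^{+}(v)$. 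Taking the infimum over unit $X \in \widetilde{\fP_{w}}$ completes the proof.

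The conceptual obstacle is exactly the one flagged in the introductory remarks: there is no direct relation between $\widetilde{\fP_{w}}$ and $\widetilde{\fP_{v}}$, so the eigenvalue data controlling $\lambda_{X}(w)$ cannot be compared to $\lambda^{+}(v)$ by any purely algebraic manipulation inside a single Lie algebra. The step that circumvents this is applying (5.6) to $exp(tX)g$ with respect to $v$, which rewrites the geodesic ray $exp(tX)(w)$ as $k(t)\,exp(Z(t))(v)$ with $Z(t)$ ranging in $\widetilde{\fP_{v}}$, so its exponential growth rate is automatically governed by $\lambda^{+}(v)$. A minor point to keep an eye on is that $\lambda^{+}(v)$ is not a priori known to be nonnegative, but the $O(1)$ bound on $|Z(t)| - t$ makes the final estimate insensitive to the sign of $\lambda^{+}(v)$.
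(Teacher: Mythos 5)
Your argument is correct in outline and reaches the right conclusion, but it takes a genuinely different and considerably longer route than the paper, and it contains one unjustified claim that happens to be harmless.

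\emph{Comparison with the paper's proof.}  The paper applies the extended KP decomposition (5.6) \emph{once}, to $g$ itself, writing $g = k\,\exp(X)\,h$ with $X \in \widetilde{\fP_{v}}$, so $g(v) = k\,\exp(X)(v)$. Then it invokes the $K$-invariance of $\lambda^{-}$ from (7.3) to get $\lambda^{-}(g(v)) = \lambda^{-}(\exp(X)(v))$, and the commutation invariance (7.2) to get $\lambda_{X}(\exp(X)(v)) = \lambda_{X}(v) \leq \lambda^{+}(v)$, so the whole thing reduces to $\lambda^{-}(\exp(X)(v)) \leq \lambda_{X}(\exp(X)(v))$. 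That last step uses the (unstated but easily checked) fact that $X$, which lies in $\widetilde{\fP_{v}}$, also lies in $\widetilde{\fP}_{\exp(X)(v)}$: this follows because $\fG_{\exp(X)(v)} = \operatorname{Ad}(\exp X)(\fG_{v})$, $\exp(X)$ is symmetric and commutes with $X$, and cyclicity of the trace then gives $\langle X, \exp(X)\,Y\,\exp(-X)\rangle = \langle X, Y\rangle = 0$ for $Y \in \fG_{v}$. Your proof avoids this algebraic transfer altogether by working with the dynamical formula (7.1) and applying (5.6) to the whole ray $t \mapsto \exp(tX)g$; this is valid, but it substitutes an asymptotic estimate for what the paper accomplishes with one application of (7.2). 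The paper's route is both shorter and conceptually cleaner, since it isolates the mechanism (the $\operatorname{Ad}$-action of $\exp(X)$) that relates $\widetilde{\fP_{v}}$ to $\widetilde{\fP}_{\exp(X)(v)}$.

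\emph{A gap that is not actually needed.}  You assert $\bigl|\,|Z(t)| - t\,\bigr| \leq C_{0}$ ``by the triangle inequality,'' but only the upper bound $|Z(t)| \leq t + C_{0}$ follows that way. For the lower bound one would need $d_{R}(\exp(tX)g, \xi) \geq t - C_{0}$ for \emph{every} $\xi \in K \cdot G_{v}$, and the triangle inequality only gives $d_{R}(\exp(tX)g, \xi) \geq t - d_{R}(g,\xi)$, where $d_{R}(g,\xi)$ can be arbitrarily large if $G_{v}$ is noncompact. Your final sentence leans on this two-sided bound to claim the estimate is ``insensitive to the sign of $\lambda^{+}(v)$.'' Fortunately the issue is moot: since $G(v)$ is unbounded, $\widetilde{\fP_{v}} \neq \{0\}$ by (6.5), and if $\lambda^{+}(v) \leq 0$ one checks that for any unit $X \in \widetilde{\fP_{v}}$ both $\lambda_{X}(v) \leq 0$ and $\lambda_{-X}(v) \leq 0$, forcing $X(v) = 0$ and hence $X \in \fG_{v} \cap \widetilde{\fP_{v}} = \{0\}$, a contradiction. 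Thus $\lambda^{+}(v) > 0$, and the one-sided bound $|Z(t)| \leq t + C_{0}$ is all your argument requires. You should either prove the lower bound or (better) simply record that $\lambda^{+}(v) > 0$ and drop the claim.
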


\begin{proof}  Let g $\in$ G be given.  By (5.6) we may write $g = k exp(X) h$, where k $\in$ K, h $\in G_{v}$ and X $\in \widetilde{\fP_{v}}$.  Then $g(v) = k exp(X)(v)$, and by (7.3) $\lambda^{-}(g(v)) = \lambda^{-}(exp(X)(v))$.  From (7.2) it follows that $\lambda^{-}(exp(X)(v)) \leq \lambda_{X}(exp(X)(v)) = \lambda_{X}(v) \leq \lambda^{+}(v)$.
\end{proof}

\begin{proposition}   Let G be a closed, connected, self adjoint, noncompact subgroup of $GL(n,\br)$, and let v $\in \br^{n}$ be a nonzero minimal vector such that G(v) is unbounded.  Then

	$0 < \lambda_{X}(v) \leq 1$ for all X $\in \widetilde{\fP_{v}}$ with $|X| = 1$.
\end{proposition}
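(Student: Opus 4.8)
The plan is to establish the two inequalities separately, both by elementary eigenspace considerations for the symmetric matrix $X$, combined with the minimality of $v$.

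\emph{Upper bound.} Since $X \in \widetilde{\fP_{v}} \subset \fP$, the matrix $X$ is symmetric, so $\br^{n}$ decomposes as an orthogonal direct sum of eigenspaces of $X$, and $|X|^{2} = \mathrm{trace}\,(XX^{t}) = \mathrm{trace}\,(X^{2}) = \sum_{i}\lambda_{i}^{2}$, the sum running over the eigenvalues of $X$ counted with multiplicity. From $|X| = 1$ we get $\lambda_{i}^{2} \le 1$ for every eigenvalue, hence every eigenvalue of $X$ lies in $[-1,1]$; in particular $\lambda_{X}(v) \le 1$.

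\emph{Lower bound.} I would argue by contradiction, assuming $\lambda_{X}(v) \le 0$. Write $v = \sum_{i} v_{i}$ with $X(v_{i}) = \lambda_{i}v_{i}$, an orthogonal decomposition. By definition $\lambda_{X}(v)$ is the largest eigenvalue for which the corresponding component of $v$ is nonzero, so the hypothesis $\lambda_{X}(v) \le 0$ means $\lambda_{i} \le 0$ whenever $v_{i} \ne 0$. Then, exactly as in the proof of (7.1),
\[ f_{X}(t) := |\exp(tX)(v)|^{2} = \sum_{i} e^{2t\lambda_{i}}|v_{i}|^{2} \longrightarrow \sum_{\lambda_{i}=0}|v_{i}|^{2} \le |v|^{2} \qquad (t \to +\infty). \]
On the other hand, $v$ is minimal, so $f_{X}(t) \ge |v|^{2}$ for every $t$; letting $t \to +\infty$ forces $\sum_{\lambda_{i}=0}|v_{i}|^{2} = |v|^{2}$, i.e. $v_{i} = 0$ whenever $\lambda_{i} \ne 0$. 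Hence $X(v) = 0$, so $X \in \fG_{v}$. But $X \in \widetilde{\fP_{v}} = (\fK + \fG_{v})^{\perp}$ is orthogonal to $\fG_{v}$, so lying in both subspaces forces $\langle X, X \rangle = 0$, contradicting $|X| = 1$. Therefore $\lambda_{X}(v) > 0$, which together with the upper bound finishes the argument.

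I do not expect a serious obstacle here: everything reduces to the asymptotics of $|\exp(tX)(v)|^{2}$ as $t \to +\infty$, the inequality $f_{X}(t) \ge f_{X}(0) = |v|^{2}$ coming from minimality, and the defining orthogonality $\widetilde{\fP_{v}} \perp (\fK + \fG_{v})$. The only point needing a little care is the eigenspace bookkeeping in the limit step, making sure it really pins $v$ down inside $\ker X$. Note that unboundedness of $G(v)$ is not used in the inequality itself; by 5) of (6.5) it is precisely what guarantees $\widetilde{\fP_{v}} \ne \{0\}$, so that unit vectors $X$ exist and the conclusion (equivalently, positivity of $\lambda^{-}(v)$) is not a vacuous statement about an infimum over the empty set.
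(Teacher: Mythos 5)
Your proof is correct, and it takes a genuinely different (if parallel) route from the paper's. The paper first observes directly that $X(v)\neq 0$: if $X(v)=0$ then $X\in\fP_{v}$, but $X\in\widetilde{\fP_{v}}\perp\fP_{v}$ forces $X=0$, contradicting $|X|=1$. It then uses only the \emph{first-order} minimality condition $0 = \frac{d}{dt}\big|_{t=0}|\exp(tX)(v)|^{2} = 2\langle X(v),v\rangle$, which gives $\sum_{i}\lambda_{i}|v_{i}|^{2}=0$; since not every contributing $\lambda_{i}$ vanishes, some $\lambda_{i}>0$ has $v_{i}\neq 0$, and $\lambda_{X}(v)>0$ follows without any contradiction argument. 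You instead use the \emph{global} minimality $f_{X}(t)\ge|v|^{2}$ and the asymptotics of $f_{X}$ as $t\to+\infty$ to force $X(v)=0$ when $\lambda_{X}(v)\le 0$, then derive the same contradiction from $\widetilde{\fP_{v}}\perp\fG_{v}$. Both arguments rest on exactly the same two pillars (minimality and the orthogonality in the definition of $\widetilde{\fP_{v}}$); the paper's is a bit more economical, using only the vanishing of the moment map $\langle m(v),X\rangle=\langle X(v),v\rangle=0$, while yours is a fully fleshed-out version of the same convexity idea and requires slightly more bookkeeping with the limit. Your closing observation that unboundedness of $G(v)$ serves only to guarantee $\widetilde{\fP_{v}}\neq\{0\}$ is correct and matches the paper's remark at the start of section 7.
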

	
\begin{proof}  Let X $\in \widetilde{\fP_{v}}$ with $|X| =1$ be given.  Since v is minimal we have $0 = X(|v|^{2}) = 2 \langle X(v) , v \rangle$.  Let $\lambda_{1}, ... , \lambda_{N}$ be the distinct eigenvalues of X and write $v = \sum_{i=1}^{N} v_{i}$, where $X(v_{i}) = \lambda_{i} v_{i}$ for $1 \leq i \leq N$.  Then $ 0 = \langle X(v) , v \rangle = \sum_{i=1}^{N} \lambda_{i} |v_{i}|^{2}$.  Observe that $X(v) \neq 0$ since $X \in \widetilde{\fP_{v}} \subset \fP_{v}^{\perp}$.  It follows that $\lambda_{i} > 0$ for some i with $v_{i} \neq 0$, and hence $\lambda_{X}(v) \geq \lambda_{i} > 0$.

	Let $X \in \fP$ with $|X| = 1$, and let $\{\lambda_{1}, ... , \lambda_{n} \}$ be the eigenvalues of X.  It follows that $\lambda_{i}^{2} \leq \sum_{k=1}^{n} \lambda_{k}^{2} = |X|^{2} = 1$ for any i with $1 \leq i \leq n$.  In particular $\lambda_{X}(v) \leq 1$.
\end{proof}

\begin{proposition}  Let G be a closed, connected, self adjoint, noncompact subgroup of $GL(n,\br)$, and let v $\in \br^{n}$ be a nonzero vector with G(v) unbounded.
	
	1)  Let X be a nonzero element of $\fP$.  Then for every $\epsilon > 0$ there exist neighborhoods U $\subset \br^{n}$ of v and O $\subset  \fP$ of X such that if $(X',v') \in O \times U$, then $\lambda_{X'}(v') \geq \lambda_{X}(v) - \epsilon$.
	
	2)  There exists X $\in \widetilde{\fP_{v}}$ with $|X| = 1$ such that $\lambda^{-}(v) = \lambda_{X}(v)$.
\end{proposition}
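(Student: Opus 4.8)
The plan is to establish part 1) first — it is precisely a lower-semicontinuity statement for the map $X'\mapsto\lambda_{X'}(v')$ — and then to deduce part 2) from it by a compactness argument. The obstruction to a naive proof of 1) is that the eigenspaces of $X'$ need not vary continuously with $X'$, so I will not perturb the spectral data directly. Instead I will use the dynamical description $\lambda_X(v)=\lim_{t\to\infty}\frac{\log|exp(tX)(v)|}{t}$ from (7.1), together with the two-sided estimate established in its proof:
\[
e^{t\lambda_X(v)}\,|v_i|\;\le\;|exp(tX)(v)|\;\le\;e^{t\lambda_X(v)}\,|v|\qquad(t\ge 0),
\]
where $v_i$ is the (nonzero) component of $v$ in the eigenspace of $X$ for the eigenvalue $\lambda_X(v)$. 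The right-hand inequality, with $X'$ in place of $X$, holds for every $X'\in\fP$, since it uses only that $\br^n$ is the orthogonal sum of the eigenspaces of $X'$ and that $\lambda_{X'}(v')$ is the largest eigenvalue of $X'$ on which $v'$ has a nonzero component.

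Given $\epsilon>0$, I would fix once and for all a time $t_0=t_0(\epsilon,v,X)$ large enough that $\frac{1}{t_0}\,|\log|v_i||<\frac{\epsilon}{4}$ and $\frac{1}{t_0}\,|\log|v||<\frac{\epsilon}{8}$. At time $t_0$ the left estimate gives $\log|exp(t_0X)(v)|\ge t_0\lambda_X(v)+\log|v_i|>t_0\bigl(\lambda_X(v)-\frac{\epsilon}{4}\bigr)$. Since the map $(X',v')\mapsto|exp(t_0X')(v')|$ is continuous and strictly positive at $(X,v)$, and $v'\mapsto|v'|$ is continuous and positive at $v$, I can choose neighborhoods $O\subset\fP$ of $X$ and $U\subset\br^n$ of $v$ on which both $\log|exp(t_0X')(v')|>t_0\bigl(\lambda_X(v)-\frac{\epsilon}{2}\bigr)$ and $\frac{1}{t_0}\,|\log|v'||<\frac{\epsilon}{4}$ hold. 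Feeding this into the right estimate applied to $X'$ at time $t_0$, namely $\log|exp(t_0X')(v')|\le t_0\lambda_{X'}(v')+\log|v'|$, gives
\[
\lambda_{X'}(v')\;>\;\lambda_X(v)-\frac{\epsilon}{2}-\frac{\log|v'|}{t_0}\;>\;\lambda_X(v)-\epsilon ,
\]
which is assertion 1).

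For 2), since $G(v)$ is unbounded we have $\widetilde{\fP_v}\neq\{0\}$ by 5) of (6.5) (cf. the Remark preceding (7.1)), so $S=\{X\in\widetilde{\fP_v}:|X|=1\}$ is a nonempty compact subset of the finite-dimensional space $\widetilde{\fP_v}$. By 1), taken with $v'=v$ held fixed, the function $X\mapsto\lambda_X(v)$ is lower semicontinuous on $S$; and a lower semicontinuous function attains its infimum on a nonempty compact set. Hence there is $X\in S$ with $\lambda_X(v)=\inf_{X'\in S}\lambda_{X'}(v)=\lambda^-(v)$. The only substantive step here is part 1); part 2) is then formal.
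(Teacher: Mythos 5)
Your proof is correct, and for assertion 1) it takes a genuinely different and arguably cleaner route than the paper's. The paper argues by contradiction and contends directly with the fact that the spectral data of $X'$ need not vary continuously: it passes to subsequences along which the number of distinct eigenvalues, their multiplicities, and the eigenspaces (viewed in the compact Grassmannians $G(m_i,n)$) all converge, and then extracts a contradiction from the limiting eigenspace decomposition of $X$. That is a fairly heavy spectral-convergence argument. You instead freeze a finite time $t_0$ chosen from the two-sided estimate in the proof of (7.1), use continuity of the single continuous function $(X',v')\mapsto|\exp(t_0X')(v')|$ to propagate the lower bound from $(X,v)$ to a neighborhood, and then feed that into the universal upper bound $\log|\exp(t_0X')(v')|\le t_0\lambda_{X'}(v')+\log|v'|$. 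This sidesteps eigenspace tracking entirely, replacing the Grassmannian compactness with continuity of the matrix exponential at one fixed time; the trade-off is that your argument is direct rather than by contradiction, and it makes explicit the role of (7.1) as the source of the semicontinuity, which the paper's proof does not. For assertion 2) your argument (lower semicontinuity on the compact unit sphere of $\widetilde{\fP_v}$, plus nonemptiness via 5) of (6.5)) is essentially the same compactness step the paper uses, just phrased abstractly through the standard fact that a lower semicontinuous function attains its infimum on a nonempty compact set instead of rerunning the $\epsilon$-argument on a minimizing sequence.
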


\begin{proof}

	We prove 1). Let a nonzero element X of $\fP$ be given.  We suppose that the assertion fails for some $\epsilon > 0$.  Then there exist sequences $\{X_{k} \} \subset \fP$ and $\{v_{k} \} \subset \br^{n}$ such that $v_{k} \rightarrow v, X_{k} \rightarrow X$ and $\lambda_{X_{k}}(v_{k}) < \lambda_{X}(v) - \epsilon$ for all k.  Passing to a subsequence we obtain the following properties simultaneously :

	a)  There exists a positive integer N such that each $X_{k}$ has N distinct eigenvalues $\{\lambda_{1}^{(k)}, ... , \lambda_{N}^{(k)} \}$.
	
	b)  There exist real numbers $\{\lambda_{1}, ... , \lambda_{N} \}$ such that $\lambda_{i}^{(k)} \rightarrow \lambda_{i}$ as k $\rightarrow \infty$ for $1 \leq i \leq N$.
	
	c)  There exist positive integers $\{m_{1}, ... , m_{N} \}$ and subspaces $\{V_{1}^{(k)}, ... , V_{N}^{(k)} \}$ of $\br^{n}$ such that
	
\hspace{.2in}  i)  dim $V_{i}^{(k)} = m_{i}$ for all k and for $1 \leq i \leq N$.

\hspace{.2in}  ii)  $X_{k} = \lambda_{i}^{(k)}~Id$ on $V_{i}^{k}$ for all k and for $1 \leq i \leq N$.

\hspace{.2in}  iii)  $\br^{n} = V_{1}^{(k)} \oplus ... \oplus V_{N}^{(k)}$, orthogonal direct sum, for all k.

	The existence of  $\{\lambda_{1}, ... , \lambda_{N} \}$ in b) follows from the fact that for $1 \leq i \leq N$ we have $|\lambda_{i}^{(k)}| \leq |X_{k}| \rightarrow |X|$ as k $\rightarrow \infty$.
	
	Passing to a further subsequence there exist subspaces $\{V_{i} \} \in G(m_{i},n)$, the (compact) Grassmannian of $m_{i}$ dimensional subspaces of $\br^{n}$, such that
	
	d)  $V_{i}^{(k)} \rightarrow V_{i}$ as k $\rightarrow \infty$ for $1 \leq i \leq N$.
	
	From c) and d) we obtain
	
	e)  $\br^{n} = V_{1} \oplus ... \oplus V_{N}$, orthogonal direct sum, and $X = \lambda_{i}$ Id on $V_{i}$ for $1 \leq i \leq N$.
	
	Note that the eigenvalues $\{\lambda_{1}, ... , \lambda_{N} \}$ for X may not all be distinct.
\newline
	By e) we may write $v = \sum_{i=1}^{N} v_{i}$, where $v_{i} \in V_{i}$ for $1 \leq i \leq N$. Choose $\beta, 1 \leq \beta \leq N$ such that $\lambda_{\beta} = \lambda_{X}(v)$.  The choice of $\beta \in \{1, ... , N \}$ may not be unique, but we show next that $\beta$ may be chosen so that v has a nonzero component in $V_{\beta}$.  Define $S_{\beta} = \{\alpha : 1 \leq \alpha \leq N~\rm{and}~\lambda_{\alpha} = \lambda_{\beta} \}$.  Then $V_{\beta}' = \oplus_{\alpha \in S_{\beta}} V_{\lambda_{\alpha}}$ is the $\lambda_{\beta}$ - eigenspace for X.  By the definition of $\lambda_{\beta} = \lambda_{X}(v)$ the vector v has a nonzero component in $V_{\beta}'$, and hence v has a nonzero component in $V_{\lambda_{\alpha}}$ for some $\alpha \in S_{\beta}$.  Replacing the original $\beta$ by this $\alpha$ we may now assume that $\beta$ has been chosen so that v has a nonzero component in $V_{\beta}$.
	
	 Since $v_{k} \rightarrow v$ as k $\rightarrow \infty$ it follows from d) that there exists a positive integer $K_{0}$ such that $v_{k}$ has a nonzero component in $V_{\beta}^{(k)}$ for all $k \geq K_{0}$.  By c), ii) we know that $X_{k} = \lambda_{\beta}^{(k)}$ Id on $V_{\beta}^{(k)}$, and hence $\lambda_{X_{k}}(v_{k}) \geq \lambda_{\beta}^{(k)}$ for all $k \geq K_{0}$.  It follows that $\lambda_{\beta}^{(k)} \leq \lambda_{X{k}}(v_{k}) < \lambda_{X}(v) - \epsilon$ for all $k \geq K_{0}$.  From b) we obtain $\lambda_{\beta} \leq \lambda_{X}(v) - \epsilon$, but this contradicts the fact that $\lambda_{\beta} = \lambda_{X}(v)$.  This completes the proof of 1) of (7.7).

	We prove 2) of (7.7).  Let $\{X_{k} \}$ be a sequence in $\widetilde{\fP_{v}}$ such that $|X_{k}| = 1$ for all k, and $\lambda_{X_{k}}(v) \rightarrow \lambda^{-}(v)$ as k $\rightarrow \infty$.  Passing to a subsequence if necessary, there exists X $\in \widetilde{\fP_{v}}$ with $|X| =1$ such that $X_{k} \rightarrow X$ as k $\rightarrow \infty$.  Let $\epsilon > 0$ be given.  By 1) of (7.7) there exists a positive integer $K_{0}$ such that $\lambda_{X_{k}}(v) \geq \lambda_{X}(v) - \epsilon$ for all k $\geq K_{0}$.  It follows that $\lambda^{-}(v) \geq \lambda_{X}(v)$ since $\epsilon > 0$ was arbitrary.  On the other hand $\lambda_{X}(v) \geq \lambda^{-}(v)$ by the definition of $\lambda^{-}(v)$.
\end{proof}

\begin{corollary}  Let G be a closed, connected, self adjoint, noncompact subgroup of $GL(n,\br)$, and let v $\in \br^{n}$ be a nonzero vector with G(v) unbounded. If $\lambda^{-}(v) < 0$, then $\overline{G(v)}$ contains the zero vector.
\end{corollary}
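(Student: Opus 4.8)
The plan is to exhibit a single one-parameter subgroup of $G$ that drives $v$ to the origin. First I would invoke assertion 2) of (7.7) to choose $X \in \widetilde{\fP_{v}}$ with $|X| = 1$ and $\lambda_{X}(v) = \lambda^{-}(v)$; by hypothesis $\lambda_{X}(v) < 0$. Since $\widetilde{\fP_{v}} \subset \fP \subset \fG$, assertion 2) of (4.4) shows that $exp(tX) \in G$ for every $t \in \br$, so the curve $t \mapsto exp(tX)(v)$ lies entirely in the orbit $G(v)$.

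Next I would apply (7.1) to this $X$ and $v$, which gives $\lim_{t \to \infty} \frac{log\,|exp(tX)(v)|}{t} = \lambda_{X}(v) = \lambda^{-}(v) < 0$; equivalently, from the computation in the proof of (7.1), $|exp(tX)(v)|^{2} = \sum_{k} exp(2t\lambda_{k})\,|v_{k}|^{2}$ where every eigenvalue $\lambda_{k}$ of $X$ with $v_{k} \neq 0$ satisfies $\lambda_{k} \leq \lambda_{X}(v) < 0$. Hence $log\,|exp(tX)(v)| \to -\infty$, i.e. $|exp(tX)(v)| \to 0$, as $t \to +\infty$. Since each $exp(tX)(v)$ lies in $G(v)$, the zero vector is a limit point of $G(v)$, so $0 \in \overline{G(v)}$.

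There is no real obstacle here once (7.1) and (7.7) are in hand; the only point that needs care is that the optimal direction realizing $\lambda^{-}(v)$ must be a genuine element of $\fP$, so that it generates a one-parameter subgroup of $G$ — and this is precisely why $\lambda^{-}(v)$ is defined using unit vectors in $\widetilde{\fP_{v}} \subset \fP$ rather than arbitrary symmetric matrices. The hypothesis that $G(v)$ is unbounded enters only to guarantee, via 5) of (6.5), that $\widetilde{\fP_{v}} \neq 0$, so that $\lambda^{-}(v)$ is an honest real number and the selection in (7.7) is possible.
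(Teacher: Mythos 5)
Your proposal is correct and follows essentially the same route as the paper: invoke assertion 2) of (7.7) to produce a unit vector $X \in \widetilde{\fP_{v}}$ with $\lambda_{X}(v) = \lambda^{-}(v) < 0$, then conclude from (7.1) (or the eigenvalue expansion in its proof) that $exp(tX)(v) \rightarrow 0$ as $t \rightarrow \infty$, so $0 \in \overline{G(v)}$. The paper's own proof is a terse two-sentence version of exactly this; your added remarks about $exp(tX) \in G$ via (4.4) and about the role of the unboundedness hypothesis in guaranteeing $\widetilde{\fP_{v}} \neq 0$ are accurate elaborations rather than deviations.
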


\begin{proof}  By 2) of (7.7) there exists X $\in \widetilde{\fP_{v}}$ such that $|X| = 1$ and $\lambda_{X}(v) = \lambda^{-}(v) < 0$.  Then exp(tX)(v) $\rightarrow 0$ by (7.1) or the proof of (7.1).
\end{proof}

$\mathit{Comparison~ between~ \lambda^{-}(v)~ and~ the~ Hilbert-Mumford~function~ M(v)}$

	We prove (2.6).  Recall that from the definitions in section 2 that we have  $\lambda_{X}(v) = - \mu_{-X}(v)$ for all nonzero X in $\fP$ and all nonzero v $\in \br^{n}$.

\begin{proposition}  Let v be a nonzero vector of $\br^{n}$.  Then $\lambda^{-}(v) \geq - M(v)$.
\end{proposition}

\begin{proof}  By 2) of (7.7) we may choose X $\in \widetilde{\fP_{v}}$ such that $|X| = 1$ and $\lambda^{-}(v) = \lambda_{X}(v)$.  Then $-\lambda^{-}(v) = - \lambda_{X}(v) = \mu_{-X}(v) \leq M(v)$.
\end{proof}

\section{Proof of the main result}
	
	We now reach the main result, whose proof will be completed after the proof of (8.6).
		
\begin{theorem}  Let G be a closed, connected, self adjoint, noncompact subgroup of $GL(n,\br)$.  Let $d_{R}$ denote the canonical right invariant Riemannian metric on G.  Let v be a nonzero  vector such that the orbit G(v) is unbounded.  Then

	1)  The function $g \rightarrow d_{R}(g, G_{v})$ is unbounded on G.
	
	2)  If v is minimal, then $\lambda^{-}(v) > 0$.  For arbitrary nonzero v we have $-1 \leq \lambda^{-}(v) \leq \lambda^{+}(v) \leq 1$.

	3)  $\lambda^{-}(v) \leq \underline{lim} \hspace{.05in} _{d_{R}(g, G_{v}) \rightarrow \infty} \hspace{.1in} \frac{log|g(v)|}{d_{R}(g, G_{v})} \leq  \overline{lim} \hspace{.05in}  _{d_{R}(g, G_{v}) \rightarrow \infty} \hspace{.1in}\frac{log|g(v)|}{d_{R}(g, G_{v})} \leq \lambda^{+}(v)$
\end{theorem}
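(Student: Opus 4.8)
\emph{Proof proposal.} The plan is to dispose of assertions 1) and 2) quickly and then put the real work into 3), which splits into an upper and a lower asymptotic bound. The single device used throughout is the generalized KP decomposition (5.6): each $g\in G$ is written $g=k\,\exp(X)\,h$ with $k\in K$, $h\in G_{v}$, $X\in\widetilde{\fP_{v}}$ and $|X|=d_{R}(g,K\cdot G_{v})=d_{R}(\exp(X),I)$. By the first Remark following (2.1), $|d_{R}(g,G_{v})-d_{R}(g,K\cdot G_{v})|\le c$ for a fixed constant $c$, so $|X|$ differs from $d_{R}(g,G_{v})$ by at most $c$; in particular $d_{R}(g,G_{v})\to\infty$ iff $|X|\to\infty$, and then $|X|/d_{R}(g,G_{v})\to 1$. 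Also $g(v)=k\,\exp(X)\,h(v)=k\,\exp(X)(v)$ since $h\in G_{v}$, so $|g(v)|=|\exp(X)(v)|$ because $k$ is orthogonal. Assertion 1) is then immediate: $G(v)$ unbounded forces $g\mapsto d_{R}(g,K\cdot G_{v})$ to be unbounded by the equivalence of 1) and 2) in (6.5), hence $g\mapsto d_{R}(g,G_{v})$ is unbounded (this also guarantees the two quantities in 3) are defined). For 2), note $\widetilde{\fP_{v}}\ne 0$ since $G(v)$ is unbounded (5) of (6.5)); for a unit $X\in\widetilde{\fP_{v}}\subset\fP$ the eigenvalues $\lambda_{i}$ of $X$ satisfy $\sum_{i}\lambda_{i}^{2}=|X|^{2}=1$, so $|\lambda_{X}(v)|\le 1$, and taking the infimum and supremum over unit $X$ gives $-1\le\lambda^{-}(v)\le\lambda^{+}(v)\le 1$; if $v$ is minimal, 2) of (7.7) gives a unit $X_{0}\in\widetilde{\fP_{v}}$ with $\lambda^{-}(v)=\lambda_{X_{0}}(v)$, and (7.6) gives $\lambda_{X_{0}}(v)>0$.

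For the upper bound in 3), take any sequence $\{g_{k}\}$ with $d_{R}(g_{k},G_{v})\to\infty$ and write $g_{k}=k_{k}\exp(X_{k})h_{k}$ as above. Expanding $v$ in the pairwise orthogonal eigenspaces of the symmetric matrix $X_{k}$ gives $|\exp(X_{k})(v)|^{2}=\sum_{i}e^{2\lambda_{i}^{(k)}}|v_{i}^{(k)}|^{2}\le e^{2\lambda_{X_{k}}(v)}|v|^{2}$; since $\lambda_{X_{k}}(v)=|X_{k}|\,\lambda_{X_{k}/|X_{k}|}(v)\le|X_{k}|\,\lambda^{+}(v)$ by the elementary scaling $\lambda_{tX}(v)=t\,\lambda_{X}(v)$ ($t>0$), we obtain $\log|g_{k}(v)|\le|X_{k}|\lambda^{+}(v)+\log|v|$. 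Dividing by $d_{R}(g_{k},G_{v})$ and using $|X_{k}|/d_{R}(g_{k},G_{v})\to 1$ yields $\overline{\lim}\,_{d_{R}(g,G_{v})\to\infty}\frac{\log|g(v)|}{d_{R}(g,G_{v})}\le\lambda^{+}(v)$. The middle inequality in 3) is trivial.

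The lower bound is the heart of the argument. Take a sequence $\{g_{k}\}$ with $d_{R}(g_{k},G_{v})\to\infty$ along which $\frac{\log|g_{k}(v)|}{d_{R}(g_{k},G_{v})}$ converges, write $g_{k}=k_{k}\exp(X_{k})h_{k}$, and set $Y_{k}=X_{k}/|X_{k}|\in\widetilde{\fP_{v}}$, a unit vector. Passing to a subsequence, I will arrange — exactly as in the proof of (7.7), using compactness of the unit sphere of the finite-dimensional space $\widetilde{\fP_{v}}$ and of the relevant Grassmannians — that $Y_{k}\to Y$ for a unit $Y\in\widetilde{\fP_{v}}$, that the distinct eigenvalues $\mu_{i}^{(k)}$ and the eigenspaces of $Y_{k}$ converge, and that the components $v_{i}^{(k)}$ of the fixed vector $v$ in the eigenspaces of $Y_{k}$ converge to its components $v_{i}$ in the eigenspaces of $Y$. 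Choosing the index $\beta$ as in (7.7) so that $v_{\beta}\ne 0$ and $\mu_{\beta}=\lambda_{Y}(v)$, we get $|v_{\beta}^{(k)}|\ge\frac{1}{2}|v_{\beta}|>0$ for large $k$ and $\mu_{\beta}^{(k)}\to\lambda_{Y}(v)\ge\lambda^{-}(v)$. Keeping only the $\beta$-term in $|\exp(X_{k})(v)|^{2}=\sum_{i}e^{2|X_{k}|\mu_{i}^{(k)}}|v_{i}^{(k)}|^{2}$ gives $\log|g_{k}(v)|\ge|X_{k}|\mu_{\beta}^{(k)}+\log(\frac{1}{2}|v_{\beta}|)$; dividing by $d_{R}(g_{k},G_{v})$ and passing to the limit (again $|X_{k}|/d_{R}(g_{k},G_{v})\to 1$) shows the limit of the ratio is $\ge\lambda^{-}(v)$, hence $\underline{\lim}\,_{d_{R}(g,G_{v})\to\infty}\frac{\log|g(v)|}{d_{R}(g,G_{v})}\ge\lambda^{-}(v)$.

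I expect the only genuine difficulty to be the point this limiting argument circumvents: the crude estimate $|\exp(X_{k})(v)|\ge e^{|X_{k}|\lambda^{-}(v)}\,|P_{k}v|$, with $P_{k}$ the orthogonal projection onto the relevant eigenspace of $X_{k}$, is worthless unless $|P_{k}v|$ is bounded away from $0$ along the sequence, and it is precisely the compactness/stable-eigenstructure argument borrowed from (7.7) that secures this. Everything else reduces to bookkeeping with the KP decomposition (5.6), the $c$-bound relating $d_{R}(g,G_{v})$ to $d_{R}(g,K\cdot G_{v})$, and the scaling property of $\lambda_{X}(v)$.
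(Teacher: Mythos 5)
Your proposal is correct and follows essentially the same strategy as the paper: the generalized KP decomposition (5.6) to reduce $g(v)$ to $\exp(X)(v)$ with $X\in\widetilde{\fP_{v}}$, the identification $|X|=d_{R}(g,K\cdot G_{v})$ together with the bounded discrepancy between $d_{R}(g,G_{v})$ and $d_{R}(g,K\cdot G_{v})$, the exact exponential formula $|\exp(X)(v)|^{2}=\sum_{i}e^{2\lambda_{i}}|v_{i}|^{2}$ in the eigenspaces of the symmetric $X$, and, for the lower bound, the compactness argument from (7.7) that stabilizes the eigenstructure along a subsequence and keeps the relevant eigencomponent of $v$ bounded away from zero.

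The one place where your route genuinely diverges from the paper's is that you pass directly from the limiting unit vector $Y\in\widetilde{\fP_{v}}$ to the conclusion $\lambda_{Y}(v)\ge\lambda^{-}(v)$, whereas the paper routes the argument through the auxiliary set $Q_{v}=\{X\in\widetilde{\fP_{v}}:|X|=1,\ d_{R}(\exp(sX),I)=d_{R}(\exp(sX),K\cdot G_{v})\ \mathrm{for\ all}\ s\ge 0\}$, proving in (8.4) that the limit $Y$ actually lies in $Q_{v}$. You do not need that extra property to prove Theorem 8.1, because $Q_{v}$ is a subset of the unit sphere of $\widetilde{\fP_{v}}$ and so $\inf\{\lambda_{X}(v):X\in Q_{v}\}\ge\lambda^{-}(v)$ in any case; your version is therefore a legitimate shortcut. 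What the paper buys by keeping track of $Q_{v}$ is the stronger Proposition 8.2, i.e.\ an exact identification of the lower asymptotic limit as $\min\{\lambda_{X}(v):X\in Q_{v}\}$, which is the potentially sharper bound alluded to in Remark 2 after (2.1). So: same engine, your proof is leaner, the paper's is sharper as a byproduct.

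One small bookkeeping note: when you invoke ``the relevant Grassmannians,'' the eigenvalue multiplicities of $Y_{k}$ can vary with $k$, so as in the paper's proof of (7.7) one should first pass to a subsequence along which the number $N$ of distinct eigenvalues and the multiplicity profile $(m_{1},\dots,m_{N})$ are constant, and only then take Grassmannian limits of each eigenspace; without this the limiting eigenspaces $V_{i}$ of $Y$ need not be eigenspaces in the naive sense (the $\lambda_{i}$ can coalesce), and the choice of $\beta$ with $v_{\beta}\neq 0$ requires the extra step the paper carries out with the set $S_{\beta}$. You gesture at this by saying ``exactly as in the proof of (7.7),'' which is fine, but it is worth knowing that this is where the real care is needed.
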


    The first assertion follows from (6.4) while the second assertion follows from (7.6) and the definitions of $\lambda^{-}(v)$ and $\lambda^{+}(v)$.
Assertion 3) will follow from (8.3) and (8.6).

      We begin the proof of 3).  We recall from the first remark after the statement of (2.1) that we can replace $d_{R}(g, G_{v})$ by $d_{R}(g, K \cdot G_{v})$ in the statement of (8.1).  For the remainder of the proof of 3) we make this replacement.
    \newline	

$\mathit{The~subset~Q_{v}~in~\widetilde{\fP_{v}}}$
\newline

	For a nonzero element v of $\br^{n}$ let $Q_{v} = \{X \in \widetilde{\fP_{v}} : |X| =1~\rm{and}~d_{R}(exp(sX), Id) = d_{R}(exp(sX) , K \cdot G_{v})~\rm{for~all}~ s \geq 0 \}$.  In words, a unit vector X in $\widetilde{\fP_{v}}$ lies in $Q_{v} \Leftrightarrow$ the identity is the point
on $K \cdot G_{v}$ closest to exp(X)(s) for all s $> 0$.
	
\begin{proposition}  Let G be a closed, connected, self adjoint, noncompact subgroup of $GL(n,\br)$.  Let v be a nonzero vector such that G(v) is unbounded.  Then

	1)  $Q_{v}$ is nonempty.
	
	2)  $\underline{lim}~_{d_{R}(g, K \cdot G_{v}) \rightarrow \infty}~\frac{log|g(v)|}{d_{R}(g, K \cdot G_{v})}  \leq \lambda_{X}(v)$ for all $X \in Q_{v}$, with equality for some X $\in Q_{v}$.
\end{proposition}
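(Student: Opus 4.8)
The plan is to realize the unit vectors of $Q_{v}$ as limits of normalized directions produced by the extended $KP$ decomposition (5.6), and then to evaluate the stated limit inferior along a minimizing sequence written in that same form. For part 1), since $G(v)$ is unbounded the function $g \mapsto d_{R}(g, K \cdot G_{v})$ is unbounded on $G$ by the equivalences in (6.5), so there is a sequence $\{g_{k}\} \subset G$ with $t_{k} := d_{R}(g_{k}, K \cdot G_{v}) \to \infty$. Apply (5.6) to write $g_{k} = k_{k}~exp(X_{k})~h_{k}$ with $k_{k} \in K$, $h_{k} \in G_{v}$, $X_{k} \in \widetilde{\fP_{v}}$ and $|X_{k}| = t_{k} = d_{R}(exp(X_{k}), Id) = d_{R}(exp(X_{k}), K \cdot G_{v})$. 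Put $Y_{k} = X_{k}/t_{k}$; these lie on the unit sphere of the finite dimensional space $\widetilde{\fP_{v}}$, so after passing to a subsequence $Y_{k} \to Y$ with $|Y| = 1$ and $Y \in \widetilde{\fP_{v}}$. To check that $Y \in Q_{v}$, fix $s \geq 0$ and take $k$ large enough that $t_{k} \geq s$; by (5.5) and the Remark following it the curve $r \mapsto exp(rY_{k})$ is a minimizing geodesic of $(G, d_{R})$ joining $Id$ to $exp(X_{k}) = exp(t_{k}Y_{k})$, and $Id$ is a closest point of $K \cdot G_{v}$ to $exp(X_{k})$, so the triangle inequality shortening argument forces $Id$ to be a closest point of $K \cdot G_{v}$ to $exp(sY_{k})$ as well, giving $d_{R}(exp(sY_{k}), K \cdot G_{v}) = d_{R}(exp(sY_{k}), Id) = s$. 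Letting $k \to \infty$ and using that $x \mapsto d_{R}(x, K \cdot G_{v})$ is $1$-Lipschitz while $d_{R}(exp(sY), Id) = s$ by (5.5), we obtain $d_{R}(exp(sY), K \cdot G_{v}) = d_{R}(exp(sY), Id)$ for every $s \geq 0$, i.e. $Y \in Q_{v}$.

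For the inequality in part 2), fix $X \in Q_{v}$: the element $exp(sX)$, $s > 0$, satisfies $d_{R}(exp(sX), K \cdot G_{v}) = d_{R}(exp(sX), Id) = s \to \infty$, while $\frac{log|exp(sX)(v)|}{s} \to \lambda_{X}(v)$ by (7.1); since the limit inferior in question is, by definition, the least limit of such quotients over sequences along which $d_{R}(\cdot, K \cdot G_{v}) \to \infty$, it is at most $\lambda_{X}(v)$. For the equality I would choose $\{g_{k}\}$ with $d_{R}(g_{k}, K \cdot G_{v}) \to \infty$ realizing the limit inferior $L$, decompose $g_{k} = k_{k}~exp(X_{k})~h_{k}$ as in part 1) with $t_{k} = d_{R}(g_{k}, K \cdot G_{v}) = |X_{k}| \to \infty$ and, along a subsequence, $Y_{k} = X_{k}/t_{k} \to Y \in Q_{v}$. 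Since $h_{k}(v) = v$ and $k_{k} \in O(n,\br)$ we have $|g_{k}(v)| = |exp(t_{k}Y_{k})(v)|$, hence $\frac{1}{t_{k}}~log|exp(t_{k}Y_{k})(v)| \to L$ along the subsequence; the inequality just proved, applied to $Y \in Q_{v}$, gives $L \leq \lambda_{Y}(v)$, so it remains only to prove $L \geq \lambda_{Y}(v)$.

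For that last step I would pass to a further subsequence along which the eigenvalues $\lambda_{i}^{(k)} \to \lambda_{i}$ and the eigenspaces $V_{i}^{(k)} \to V_{i}$ of the $Y_{k}$ converge, exactly as in the proof of (7.7), where the $V_{i}$ are the eigenspaces of $Y$; writing $v = \sum_{i} v_{i}^{(k)} = \sum_{i} v_{i}$ with $v_{i}^{(k)} \in V_{i}^{(k)}$, $v_{i} \in V_{i}$ and $v_{i}^{(k)} \to v_{i}$, and choosing as in (7.7) an index $\beta$ with $\lambda_{\beta} = \lambda_{Y}(v)$ and $v_{\beta} \neq 0$, we get $|exp(t_{k}Y_{k})(v)|^{2} \geq exp(2 t_{k} \lambda_{\beta}^{(k)})~|v_{\beta}^{(k)}|^{2}$, hence $\frac{1}{t_{k}}~log|exp(t_{k}Y_{k})(v)| \geq \lambda_{\beta}^{(k)} + \frac{1}{t_{k}}~log|v_{\beta}^{(k)}|$, whose right side tends to $\lambda_{\beta} = \lambda_{Y}(v)$ because $t_{k} \to \infty$ and $|v_{\beta}^{(k)}| \to |v_{\beta}| > 0$. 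This yields $L \geq \lambda_{Y}(v)$, hence $L = \lambda_{Y}(v)$ at the admissible vector $Y \in Q_{v}$. I expect this lower bound to be the main obstacle: the map $X' \mapsto \lambda_{X'}(v)$ is only lower semicontinuous (cf. (7.7)), so one cannot simply pass to the limit in $\lambda_{Y_{k}}(v)$; one must retain control of a single eigencomponent of $v$ that does not collapse in the limit, which is exactly what the Grassmannian compactness argument imported from (7.7) provides. The remaining ingredients --- part 1), the inequality, and the reduction of the equality to this bound --- are routine combinations of (5.5), (5.6), (6.5) and (7.1).
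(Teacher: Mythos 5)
Your proof is correct and follows essentially the same route as the paper: apply the extended KP decomposition (5.6), normalize, pass to a convergent subsequence $Y_k \to Y$, use the triangle-inequality geodesic-shortening argument to show $Y \in Q_v$, use (7.1) for the upper bound, and use the Grassmannian compactness argument from (7.7) for the lower bound $L \geq \lambda_Y(v)$. The only difference is organizational: the paper first establishes $\underline{\lim} = \inf\{\lambda_X(v) : X \in Q_v\}$ via its Lemmas 8.4 and 8.5 and then shows the infimum is attained using closedness of $Q_v$ and 1) of (7.7), whereas you directly exhibit a $Y \in Q_v$ achieving equality along a minimizing sequence --- a slightly more streamlined packaging of the same ingredients.
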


	As an immediate consequence we obtain
	
\begin{corollary}  Let G be a closed, connected, self adjoint, noncompact subgroup of $GL(n,\br)$.  Let v be a nonzero vector such that G(v) is unbounded. Then $\lambda^{-}(v) \leq \underline{lim}~_{d_{R}(g, K \cdot G_{v}) \rightarrow \infty}~\frac{log|g(v)|}{d_{R}(g, K \cdot G_{v})}$.
\end{corollary}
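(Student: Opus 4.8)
The plan is to read the corollary off from part 2) of (8.2) as a purely formal consequence. First I would observe that $Q_{v} \subset \{X \in \widetilde{\fP_{v}} : |X| = 1\}$, so by the definition of $\lambda^{-}(v)$ as the infimum of $\lambda_{X}(v)$ over unit vectors $X \in \widetilde{\fP_{v}}$ one has $\lambda^{-}(v) \leq \lambda_{X}(v)$ for every $X \in Q_{v}$. Next, by 1) of (8.2) the set $Q_{v}$ is nonempty, and by the equality clause in 2) of (8.2) there is an element $X_{0} \in Q_{v}$ with
\[
\underline{lim}~_{d_{R}(g, K \cdot G_{v}) \rightarrow \infty}~\frac{log|g(v)|}{d_{R}(g, K \cdot G_{v})} = \lambda_{X_{0}}(v).
\]
Applying the first observation to this particular $X_{0}$ gives
\[
\lambda^{-}(v) \leq \lambda_{X_{0}}(v) = \underline{lim}~_{d_{R}(g, K \cdot G_{v}) \rightarrow \infty}~\frac{log|g(v)|}{d_{R}(g, K \cdot G_{v})},
\]
which is precisely the assertion of the corollary.

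There is no substantive obstacle at this stage: all of the real work has gone into (8.2). The one point I would be careful to emphasize is that the ``upper bound half'' of 2) of (8.2) --- namely $\underline{lim} \leq \lambda_{X}(v)$ for all $X \in Q_{v}$ --- does not by itself yield the corollary, since taking the infimum over those $X$ would only give $\underline{lim} \leq \inf_{X \in Q_{v}} \lambda_{X}(v)$, an inequality in the wrong direction. It is genuinely the existence of an \emph{optimal} $X_{0} \in Q_{v}$ realizing the $\underline{lim}$ that permits the comparison with $\lambda^{-}(v)$, the latter being an infimum over the a priori larger set of all unit vectors of $\widetilde{\fP_{v}}$.

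Finally, should one wish to restate the conclusion with $d_{R}(g, G_{v})$ in place of $d_{R}(g, K \cdot G_{v})$, the first remark following (2.1) shows that these two distances differ by at most the constant $c$ bounding $d_{R}(k, Id)$ on $K$, so the corresponding values of $\underline{lim}$ coincide and the same inequality holds verbatim for $d_{R}(g, G_{v})$.
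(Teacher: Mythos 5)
Your proof is correct and is essentially the same as the paper's, which presents (8.3) as an immediate consequence of (8.2) precisely because the equality clause produces an $X_{0} \in Q_{v} \subset \{X \in \widetilde{\fP_{v}} : |X| = 1\}$ realizing the $\underline{lim}$, whence $\lambda^{-}(v) \leq \lambda_{X_{0}}(v)$ by the definition of $\lambda^{-}(v)$ as an infimum. Your observation that the non-strict inequality half of 2) alone would not suffice is accurate and correctly identifies where the equality clause is genuinely needed.
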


	We now begin the proof of (8.2).  Assertion 1) will follow from the next result.

\begin{lemma}  $\underline{lim}~_{d_{R}(g, K \cdot G_{v}) \rightarrow \infty}~\frac{log|g(v)|}{d_{R}(g, K \cdot G_{v})}  \geq inf\{\lambda_{X}(v) : X \in Q_{v} \}$
\end{lemma}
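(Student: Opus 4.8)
The plan is to produce an element $Y \in Q_{v}$ — which in particular proves 1) of (8.2) — together with the bound $L \geq \lambda_{Y}(v)$, where $L := \underline{lim}_{\, d_{R}(g, K\cdot G_{v}) \to \infty}\frac{\log|g(v)|}{d_{R}(g, K\cdot G_{v})}$; since $\lambda_{Y}(v) \geq \inf\{\lambda_{X}(v) : X \in Q_{v}\}$, this is exactly (8.4). First I would choose a sequence $\{g_{m}\} \subset G$ with $t_{m} := d_{R}(g_{m}, K\cdot G_{v}) \to \infty$ along which $\frac{\log|g_{m}(v)|}{t_{m}} \to L$ (such a sequence exists by the definition of the lower limit). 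Applying the extended KP decomposition (5.6) to each $g_{m}$, I write $g_{m} = k_{m}\exp(X_{m})h_{m}$ with $k_{m} \in K$, $h_{m} \in G_{v}$, $X_{m} \in \widetilde{\fP_{v}}$ and $|X_{m}| = t_{m} = d_{R}(\exp(X_{m}), Id) = d_{R}(\exp(X_{m}), K\cdot G_{v})$. Setting $Y_{m} = X_{m}/t_{m}$, a unit vector of $\widetilde{\fP_{v}}$ (for $m$ large), and using that $h_{m}$ fixes $v$ and that $k_{m} \in O(n,\br)$ preserves norms, we get $|g_{m}(v)| = |\exp(t_{m}Y_{m})(v)|$, so the ratios in question are exactly $\frac{\log|\exp(t_{m}Y_{m})(v)|}{t_{m}}$.

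The next step is to pass to a limit. The unit sphere of the finite-dimensional space $\widetilde{\fP_{v}}$ is compact, so after a subsequence $Y_{m} \to Y$ with $Y \in \widetilde{\fP_{v}}$, $|Y| = 1$. I would first note that $d_{R}(\exp(sY_{m}), Id) = d_{R}(\exp(sY_{m}), K\cdot G_{v}) = s$ for all $s \in [0, t_{m}]$: the first equality is the minimizing-geodesic statement (5.5) together with the remark after it, and the second holds because any $z \in K\cdot G_{v}$ with $d_{R}(\exp(sY_{m}), z) < s$ would, via the triangle inequality and $d_{R}(\exp(t_{m}Y_{m}), \exp(sY_{m})) = t_{m} - s$, contradict $d_{R}(\exp(t_{m}Y_{m}), K\cdot G_{v}) = t_{m}$. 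Now fix $s \geq 0$; for $m$ large enough that $t_{m} \geq s$ these equalities hold for $Y_{m}$, and letting $m \to \infty$ and invoking continuity of $\exp$, of $d_{R}$, and of $d_{R}(\cdot\,, K\cdot G_{v})$, they pass to $Y$. Hence $Y \in Q_{v}$, and in particular $Q_{v} \neq \emptyset$.

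It remains to show $\liminf_{m}\frac{\log|\exp(t_{m}Y_{m})(v)|}{t_{m}} \geq \lambda_{Y}(v)$, and this is where the work lies. A naive appeal to (7.1) only bounds $|\exp(t_{m}Y_{m})(v)|$ below by $\exp\!\big(t_{m}\lambda_{Y_{m}}(v)\big)$ times the norm of the projection of $v$ onto the top eigenspace of $Y_{m}$, and even though $\liminf_{m}\lambda_{Y_{m}}(v) \geq \lambda_{Y}(v)$ by 1) of (7.7), that projection can a priori decay to $0$ fast enough to ruin the estimate. To get around this I would pass to a whole band of eigenvalues: fix $\epsilon > 0$ with $\lambda_{Y}(v) - \epsilon$ not an eigenvalue of $Y$, and let $P_{m}$ (resp.\ $P$) be the orthogonal projection of $\br^{n}$ onto the sum of the eigenspaces of the symmetric matrix $Y_{m}$ (resp.\ $Y$) with eigenvalue $> \lambda_{Y}(v) - \epsilon$. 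Since $Y_{m} \to Y$ and $\lambda_{Y}(v) - \epsilon$ is not in the spectrum of $Y$, these spectral projections converge, $P_{m} \to P$; and $Pv \neq 0$ because the $\lambda_{Y}(v)$-eigenspace of $Y$ lies in the range of $P$ and $v$ has a nonzero component there. Hence $|P_{m}v| \geq \delta > 0$ for $m$ large. Using the orthogonal $Y_{m}$-invariant splitting $v = P_{m}v + (I - P_{m})v$, the orthogonality of the two invariant pieces, and the fact that $\exp(t_{m}Y_{m})$ dilates $P_{m}v$ by a factor at least $\exp\!\big(t_{m}(\lambda_{Y}(v) - \epsilon)\big)$, one gets $|\exp(t_{m}Y_{m})(v)| \geq \delta\,\exp\!\big(t_{m}(\lambda_{Y}(v) - \epsilon)\big)$, so $\frac{\log|\exp(t_{m}Y_{m})(v)|}{t_{m}} \geq \lambda_{Y}(v) - \epsilon + \frac{\log\delta}{t_{m}}$. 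Taking $\liminf$ and then letting $\epsilon \downarrow 0$ through admissible values yields $L \geq \lambda_{Y}(v) \geq \inf\{\lambda_{X}(v) : X \in Q_{v}\}$. The main obstacle is precisely this continuity-of-spectral-projections step — equivalently, one can instead reopen the subsequence/eigenspace construction in the proof of 1) of (7.7) to obtain a uniform lower bound on the relevant component of $v$ — while everything else is bookkeeping with (5.5), (5.6) and the triangle inequality.
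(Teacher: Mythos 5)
Your proposal is correct, and its overall skeleton — apply (5.6), normalize $X_m = t_m Y_m$ with $t_m = d_R(g_m, K\cdot G_v)$, extract a convergent subsequence $Y_m \to Y$, prove $Y \in Q_v$ via the triangle-inequality/minimizing-geodesic argument, and then bound the lim inf below by $\lambda_Y(v)$ — coincides with the paper's. The only real divergence is in the final estimate. The paper re-invokes the eigenvalue/eigenspace bookkeeping set up in the proof of 1) of (7.7): after passing to a further subsequence it obtains subspaces $V_i^{(r)} \to V_i$ in the Grassmannian with $Y_r = \lambda_i^{(r)}\,\mathrm{Id}$ on $V_i^{(r)}$, chooses an index $i$ with $v_i \neq 0$ and $\lambda_i = \lambda_Y(v)$, and tracks the single component $v_i^{(r)} \to v_i \neq 0$, giving $\frac{\log|g_r(v)|}{t_r} \geq \lambda_i^{(r)} + \frac{\log|v_i^{(r)}|}{t_r} \to \lambda_Y(v)$ directly, with no $\epsilon$. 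You instead project onto the band of eigenvalues above a threshold $\lambda_Y(v) - \epsilon$ chosen to miss the spectrum of $Y$, use the (standard, e.g.\ via Riesz's contour-integral formula) continuity of the corresponding spectral projections $P_m \to P$ together with $Pv \neq 0$ to get a uniform lower bound $|P_m v| \geq \delta$, and then let $\epsilon \downarrow 0$ through admissible values. Your route is slightly cleaner in that it does not reopen the subsequence-of-subsequences construction for eigenspaces, at the price of invoking spectral-projection continuity and an $\epsilon$-limit; the paper's route is marginally more elementary, recycling machinery already developed in (7.7). Both work, and as you note they are morally the same observation: a naive appeal to $\lambda_{Y_m}(v)$ alone fails because the top-eigenspace component of $v$ can degenerate, and one must instead control a whole stable chunk of $v$.
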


\begin{proof}  Let $\{g_{r} \}$ be any sequence in G such that $d_{R}(g_{r}, K \cdot G_{v}) \rightarrow \infty$ as r $\rightarrow \infty$.  It suffices to prove that $A = \underline{lim}~_{r \rightarrow \infty}~\frac{log|g_{r}(v)|}{d_{R}(g_{r}, K \cdot G_{v})} \geq inf \{\lambda_{X}(v) : X \in Q_{v} \}$.  If $A = \infty$ there is nothing to prove, so we assume that A is finite.  Passing to a subsequence we may assume that $\frac{log|g_{r}(v)|}{d_{R}(g_{r}, K \cdot G_{v})} \rightarrow A$ as r $\rightarrow \infty$.

	By (5.6) we may write $g_{r} = k_{r} exp(X_{r}) h_{r}$ for each r, where $k_{r} \in K, h_{r} \in G_{v}$ and $X_{r} \in \widetilde{\fP_{v}}$ with $|X_{r}| = d_{R}(g_{r}, K \cdot G_{v})= d_{R}(exp(X), K \cdot G_{v}) = d_{R}(exp(X), I)$.  Write $X_{r} = t_{r} Y_{r}$, where $Y_{r} = \frac{X_{r}}{|X_{r}|}$ and $ t_{r} = |X_{r}| \rightarrow \infty$ as r $\rightarrow \infty$.  Passing to a further subsequence there exists Y $\in \widetilde{\fP_{v}}$ with $|Y| = 1$ such that $Y_{r} \rightarrow Y$ as r $\rightarrow \infty$.
	
	Let N, $\{\lambda_{1}^{(r)}, ... , \lambda_{N}^{(r)} \}, \{\lambda_{1}, ... , \lambda_{N} \}, \{V_{1}^{(r)}, ... , V_{N}^{(r)} \}$ and $\{V_{1}, ... , V_{N} \}$ have the same definition and properties of a), b), c) d) and e) of the proof of 1) of (7.7), with $Y_{r}$ replacing $X_{k}$ and v$_{r} =$ v for all r.  Write $v = \sum_{i=1}^{N} v_{i}$, where $Y(v_{i}) = \lambda_{i} v_{i}$ for $1 \leq i \leq N$.  Choose i such that $v_{i} \neq 0$ and $\lambda_{i} = \lambda_{Y}(v)$.  Then by d) there exists a positive integer $R_{0}$ such that v has a nonzero component $v_{i}^{(r)}$ in $V_{i}^{(r)}$ for all $r \geq R_{0}$.  By c) we have for each r the expression $v = \sum_{j=1}^{N} v_{j}^{(r)}$, where $Y_{r}(v_{j}^{(r)}) = \lambda_{j}^{(r)} v_{j}^{(r)}$ for $1 \leq j \leq N$ and all r.  Hence $|g_{r})v)|^{2} = |exp(X_{r})(v)|^{2} = |exp(t_{r} Y_{r})(v)|^{2} = \sum_{j=1}^{N} exp(2 t_{r} \lambda_{j}^{(r)}) |v_{j}^{(r)}|^{2} \geq exp(2 t_{r} \lambda_{i}^{(r)}) |v_{i}^{(r)}|^{2}$.  It follows that $\frac{log |g_{r}(v)|}{d_{R}(g_{r}, K \cdot G_{v})} = \frac{log |g_{r}(v)|}{t_{r}} \geq \lambda_{i}^{(r)} + \frac{log |v_{i}^{(r)}|}{t_{r}}$.  Since $v_{i}^{(r)} \rightarrow v_{i} \neq 0$ and $\lambda_{i}^{(r)} \rightarrow \lambda_{i} = \lambda_{Y}(v)$ as r $\rightarrow \infty$ we obtain $A  = lim~_{r \rightarrow \infty} \frac{log|g_{r}(v)|}{d_{R}(g_{r}, K \cdot G_{v})} \geq \lambda_{Y}(v)$.
	
	To complete the proof of (8.4) it remains only to prove that $Y \in Q_{v}$.  By construction $|Y| = 1$.  It will be useful to note the following consequence of the triangle inequality : Let  $\gamma(t)$ be a unit  speed geodesic of $(G, d_{R})$ such that $\gamma(0) = Id$ and $d_{R}(\gamma(t_{0}), Id) = d_{R}(\gamma(t_{0}), K \cdot G_{V})$ for some $t_{0} > 0$.  Then $d_{R}(\gamma(s), Id) = d_{R}(\gamma(s), K \cdot G_{v})$ for $0 \leq s \leq t_{0}$.
	
	From the discussion above we recall that $X_{r} = t_{r} Y_{r}$, where $t_{r} = |X_{r}|$.  By the remark following (5.5) and the definition of $X_{r}$ we know that $t_{r} = d_{R}(exp(t_{r} Y_{r}), Id) = d_{R}(exp(X_{r}), Id) = d_{R}(exp(X_{r}), K \cdot G_{v}) = d_{R}(exp(t_{r} Y_{r}), K \cdot G_{v})$.  Applying the observation of the previous paragraph to the unit speed geodesics $\gamma_{r}(s) = exp(s Y_{r})$ we find that $d_{R}(exp(s Y_{r}), Id) = \newline d_{R}(exp(s Y_{r}), K \cdot G_{v})$ for all r and $0 \leq s \leq t_{r}$.  Now $Y_{r} \rightarrow Y$ and $t_{r} \rightarrow \infty$ as r $\rightarrow \infty$, and we conclude that $d_{R}(exp(s Y), Id) = d_{R}(exp(s Y), K \cdot G_{V})$ for all s $\geq 0$.  Hence $Y \in Q_{v}$.
\end{proof}

	For the proof of (8.2) it remains to prove assertion 2).
	
\begin{lemma} For every X $\in Q_{v}~,~ \lambda_{X}(v) \geq \underline{lim}~_{d_{R}(g, K \cdot G_{v}) \rightarrow \infty}~\frac{log|g(v)|}{d_{R}(g, K \cdot G_{v})}$.
\end{lemma}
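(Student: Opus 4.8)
The plan is to prove (8.6) by exhibiting, for the given $X \in Q_v$, one explicit sequence in $G$ along which the quotient $\frac{\log|g(v)|}{d_R(g, K\cdot G_v)}$ converges to $\lambda_X(v)$. Since $\underline{\lim}_{d_R(g, K\cdot G_v)\to\infty}\frac{\log|g(v)|}{d_R(g, K\cdot G_v)}$ is by definition the \emph{smallest} limit of sequences $\frac{\log|g_k(v)|}{d_R(g_k, K\cdot G_v)}$ with $d_R(g_k, K\cdot G_v)\to\infty$, producing one such sequence with limit $\lambda_X(v)$ immediately forces the lower limit to be $\le \lambda_X(v)$.

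First I would take $g_s = \exp(sX)$ for $s\ge 0$. Because $X\in\widetilde{\fP_v}\subset\fP$ and $|X|=1$, Corollary (5.5) and the remark following it show that $s\mapsto\exp(sX)$ is a unit-speed minimizing geodesic of $(G,d_R)$ issuing from $I$, so $d_R(\exp(sX),I)=s$ for all $s\ge 0$. The defining property of $Q_v$ then gives $d_R(\exp(sX),K\cdot G_v)=d_R(\exp(sX),I)=s$. Consequently the sequence $g_k=\exp(kX)$, $k=1,2,\dots$, satisfies $d_R(g_k, K\cdot G_v)=k\to\infty$, so it is an admissible test sequence for the lower limit.

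Next I would evaluate the numerator. Since $g_k(v)=\exp(kX)(v)$ we have
\[ \frac{\log|g_k(v)|}{d_R(g_k, K\cdot G_v)} = \frac{\log|\exp(kX)(v)|}{k}, \]
and by the dynamical formula of Proposition (7.1) the right-hand side tends to $\lambda_X(v)$ as $k\to\infty$. Thus $\{g_k\}$ is a sequence in $G$ with $d_R(g_k, K\cdot G_v)\to\infty$ along which the quotient converges to $\lambda_X(v)$; by the definition of the lower limit as the infimum of all such attainable limits, we conclude $\underline{\lim}_{d_R(g, K\cdot G_v)\to\infty}\frac{\log|g(v)|}{d_R(g, K\cdot G_v)}\le\lambda_X(v)$, which is exactly (8.6).

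I do not expect a genuine obstacle here; the whole content lies in choosing the right test sequence. The one point requiring care — and the place where the hypothesis $X\in Q_v$ (as opposed to merely $X\in\widetilde{\fP_v}$ with $|X|=1$) is essential — is the identity $d_R(\exp(sX),K\cdot G_v)=s$. For a general unit vector $X\in\widetilde{\fP_v}$ the identity need not be the nearest point of $K\cdot G_v$ to $\exp(sX)$, so $d_R(\exp(sX),K\cdot G_v)$ could be strictly smaller than $s$ and the quotient along $\exp(sX)$ would no longer be pinned to $\lambda_X(v)$; membership in $Q_v$ is precisely what rules this out.
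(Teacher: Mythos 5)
Your argument is correct and is essentially the same as the paper's: both use that for $X\in Q_v$ the curve $t\mapsto\exp(tX)$ is a unit-speed minimizing geodesic with $d_R(\exp(tX),K\cdot G_v)=t$, and then apply Proposition (7.1) to identify the limit of the quotient along this curve with $\lambda_X(v)$, which bounds the lower limit from above. The only cosmetic difference is that you discretize to $g_k=\exp(kX)$ whereas the paper works with the continuous parameter $t\to\infty$; the content is identical.
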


\begin{proof}  Let X $\in Q_{v}$ be arbitrary.  Since $|X| = 1$ it follows from the remark following (5.5) and the definition of $Q_{v}$ that $d_{R}(exp(tX), K \cdot G_{v}) = d_{R}(exp(tX), Id) = t$ for all t $> 0$.  By (7.1) it follows that $\underline{lim}~_{d_{R}(g, K \cdot G_{v}) \rightarrow \infty}~\frac{log|g(v)|}{d_{R}(g, K \cdot G_{v})} \leq lim~_{t \rightarrow \infty} \frac{log|exp(tX)(v)|}{d_{R}(exp(tX), K \cdot G_{v})} = lim~_{t \rightarrow \infty} \frac{log|exp(tX)(v)|}{t} = \lambda_{X}(v)$.
\end{proof}

	We complete the proof of assertion 2) of (8.2).  From (8.4) and (8.5) it follows that  \newline $\underline{lim}~_{d_{R}(g, K \cdot G_{v}) \rightarrow \infty}~\frac{log|g(v)|}{d_{R}(g, K \cdot G_{v})} = inf  \{\lambda_{X}(v) : X \in Q_{v} \}$.  Let $\{X_{r} \}$ be a sequence in $Q_{v}$ such that $\lambda_{X_{r}}(v) \rightarrow \underline{lim}~_{d_{R}(g, K \cdot G_{v}) \rightarrow \infty}~\frac{log|g(v)|}{d_{R}(g, K \cdot G_{v})}$ as r $\rightarrow \infty$.  Passing to a further subsequence we may assume that $X_{r} \rightarrow X$ as r $\rightarrow \infty$.  Note that $|X| = 1$ and $X \in Q_{v}$ since $Q_{v}$ is closed in $\widetilde{\fP_{v}}$.  By 1) of (7.7) it follows that  $\underline{lim}~_{d_{R}(g, K \cdot G_{v}) \rightarrow \infty}~\frac{log|g(v)|}{d_{R}(g, K \cdot G_{v})} = lim~_{r \rightarrow \infty}~\lambda_{X_{r}}(v) \geq \lambda_{X}(v)$.  Equality follows by (8.5).
	
\begin{proposition}  Let G be a closed, connected, self adjoint, noncompact subgroup of $GL(n,\br)$.  Let v be a nonzero vector such that G(v) is unbounded.  Then $\overline{lim}~_{d_{R}(g, K \cdot G_{v}) \rightarrow \infty}~\frac{log|g(v)|}{d_{R}(g, K \cdot G_{v})}  \leq \lambda^{+}(v).$
\end{proposition}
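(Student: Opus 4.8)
The plan is to run the argument of (8.4) in reverse: instead of the lower eigenvalue estimate from the proof of (7.1), I extract the companion upper estimate. Throughout I use the reduction recorded in Remark 1 after (2.1), so that $d_R(g, G_v)$ is replaced by $d_R(g, K \cdot G_v)$. I first note that since $G(v)$ is unbounded, 5) of (6.5) gives $\widetilde{\fP_{v}} \neq \{0\}$, and the second paragraph of the proof of (7.6) shows $\lambda_X(v) \leq 1$ for every unit vector $X \in \fP$; hence $\lambda^{+}(v)$ is a well-defined real number, which is all we need for it to serve as an upper bound.

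First I would fix an arbitrary sequence $\{g_r\} \subset G$ with $t_r := d_R(g_r, K \cdot G_v) \to \infty$; it suffices to prove that $\frac{\log|g_r(v)|}{t_r} \leq \lambda^{+}(v) + \frac{\log|v|}{t_r}$ for all $r$, since then letting $r \to \infty$ gives $\overline{lim}_{d_R(g, K \cdot G_v) \to \infty} \frac{\log|g(v)|}{d_R(g, K \cdot G_v)} \leq \lambda^{+}(v)$. By the extended $KP$ decomposition (5.6) write $g_r = k_r \exp(X_r) h_r$ with $k_r \in K$, $h_r \in G_v$, $X_r \in \widetilde{\fP_{v}}$ and $|X_r| = d_R(g_r, K \cdot G_v) = t_r$. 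Since $h_r(v) = v$ and $k_r \in O(n,\br)$ preserves norms, $|g_r(v)| = |\exp(X_r)(v)|$.

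Next, set $Y_r = X_r / t_r \in \widetilde{\fP_{v}}$, a unit vector, and decompose $v = \sum_j v_j^{(r)}$ into eigenvectors of $Y_r$ with eigenvalues $\lambda_j^{(r)}$. Since $t_r \geq 0$, the computation in the proof of (7.1) gives $|\exp(X_r)(v)|^2 = \sum_j \exp(2 t_r \lambda_j^{(r)}) |v_j^{(r)}|^2 \leq \exp(2 t_r \lambda_{Y_r}(v)) |v|^2$, using $\lambda_j^{(r)} \leq \lambda_{Y_r}(v)$ whenever $v_j^{(r)} \neq 0$ together with $\sum_j |v_j^{(r)}|^2 = |v|^2$. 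Taking logarithms, dividing by $t_r$, and using $|g_r(v)| = |\exp(X_r)(v)|$ yields $\frac{\log|g_r(v)|}{t_r} \leq \lambda_{Y_r}(v) + \frac{\log|v|}{t_r}$. Finally, since $Y_r \in \widetilde{\fP_{v}}$ and $|Y_r| = 1$, the definition of $\lambda^{+}(v)$ gives $\lambda_{Y_r}(v) \leq \lambda^{+}(v)$, which is the desired term-by-term inequality.

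I do not expect a real obstacle here. Unlike in (8.4), there is no need to pass to a subsequence to produce a limiting direction or to check membership in a set such as $Q_v$: the bound $\lambda_{Y_r}(v) \leq \lambda^{+}(v)$ holds for each $r$ straight from the definition of $\lambda^{+}(v)$, and the error term $\frac{\log|v|}{t_r}$ vanishes automatically as $t_r \to \infty$. The only points requiring a word of care are the standing reduction to $d_R(g, K \cdot G_v)$ and the observation that $\lambda^{+}(v)$ is finite, both already in hand.
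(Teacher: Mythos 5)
Your proof is correct and is essentially the same argument as the paper's proof of (8.6): both use the extended KP decomposition (5.6) to write $g_r = k_r \exp(t_r Y_r) h_r$ with $Y_r \in \widetilde{\fP_{v}}$ a unit vector and $t_r = d_R(g_r, K\cdot G_v)$, then decompose $v$ into $Y_r$-eigenvectors and bound $|\exp(t_r Y_r)(v)|^2 \leq \exp(2t_r \lambda^+(v))|v|^2$. Your only departure is an explicit intermediate appeal to $\lambda_{Y_r}(v) \leq \lambda^+(v)$ rather than folding it into the displayed estimate, which is a cosmetic difference.
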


\begin{proof}  Let $\{g_{r} \}$ be a sequence in G such that $d_{R}(g_{r}, K \cdot G_{v}) \rightarrow \infty$ as r $\rightarrow \infty$. It suffices to prove that $\overline{lim}~_{r \rightarrow \infty}~\frac{log|g_{r}(v)|}{d_{R}(g_{r}, K \cdot G_{v})} \leq \lambda^{+}(v)$.  As in the proof of (8.4) we may write $g_{r} = k_{r} exp(t_{r} Y_{r}) h_{r}$, where $k_{r} \in K, h_{r} \in G_{v}, Y_{r} \in \widetilde{\fP_{v}}~\rm{with}~|Y_{r}| = 1$ and $t_{r} = d_{R}(g_{r}, K \cdot G_{v}) \rightarrow \infty$ as r $\rightarrow \infty$.  For each r we write $v = \sum_{i=1}^{N_{r}} v_{i}^{(r)}$, where $Y_{r}(v_{i}^{(r)}) = \lambda_{i}^{(r)} v_{i}^{(r)}$ for some real numbers $\{\lambda_{1}^{(r)}, ... , \lambda_{N_{r}}^{(r)} \}$.  Then $|g_{r}(v)|^{2} =|exp(t_{r} Y_{r})(v)|^{2} = \sum_{i=1}^{N_{r}} exp(2t_{r} \lambda_{i}^{(r)}) |v_{i}^{(r)}|^{2} \leq exp(2 t_{r} \lambda^{+}(v)) |v|^{2}$.  It follows that $\overline{lim}~_{r \rightarrow \infty}~\frac{log|g_{r}(v)|}{d_{R}(g_{r}, K \cdot G_{v})} = \overline{lim}~_{r \rightarrow \infty}~\frac{log|g_{r}(v)|}{t_{r}} \leq \lambda^{+}(v)$.
\end{proof}

    As we observed earlier, the proof of 3) of (8.1) now follows from (8.3) and (8.6).  The proof of (8.1) ia complete.
\newline

$\mathit{The~case~that~G_{v}~is~compact}$

\begin{corollary}  Let G be a closed, connected, self adjoint, noncompact subgroup of $GL(n,\br)$.  Let $d_{R}$ and $d_{L}$ denote respectively the canonical right invariant and left invariant Riemannian metrics on G.  Let v be a nonzero  vector such that the orbit G(v) is unbounded and $G_{v}$ is compact.  Let I denote the identity matrix in $GL(n, \br)$.  Then

	   $\lambda^{-}(v) = \underline{lim} \hspace{.05in} _{d_{R}(g, I) \rightarrow \infty} \hspace{.1in} \frac{log|g(v)|}{d_{R}(g, I)} = \underline{lim} \hspace{.05in} _{d_{L}(g, I) \rightarrow \infty} \hspace{.1in} \frac{log|g(v)|}{d_{L}(g, I)}\leq  \overline{lim} \hspace{.05in}  _{d_{L}(g, I) \rightarrow \infty} \hspace{.1in}\frac{log|g(v)|}{d_{L}(g, I)} = \overline{lim} \hspace{.05in}  _{d_{R}(g, I) \rightarrow \infty} \hspace{.1in}\frac{log|g(v)|}{d_{R}(g, I)} = \lambda^{+}(v)$
\end{corollary}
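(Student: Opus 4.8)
The plan is to derive (8.7) from (8.1) using only one consequence of the compactness of $G_{v}$, namely that $K\cdot G_{v}$ is then compact, so that $d_{R}(g,I)$ and $d_{R}(g,K\cdot G_{v})$ differ by a bounded amount; the ``sharp'' halves of the statement will instead come from geodesic rays, which behave identically for $d_{R}$ and $d_{L}$. So first I would dispose of those sharp halves. Since $G(v)$ is unbounded we have $\widetilde{\fP_{v}}\neq 0$ by 5) of (6.5), so pick a unit vector $X\in\widetilde{\fP_{v}}\subset\fP$. By (5.5) and the remark following it, $\gamma_{X}(t)=exp(tX)$ is a minimizing geodesic of both $(G,d_{R})$ and $(G,d_{L})$, so $d_{R}(\gamma_{X}(t),I)=d_{L}(\gamma_{X}(t),I)=t\to\infty$, while $\frac{log|\gamma_{X}(t)(v)|}{t}\to\lambda_{X}(v)$ by (7.1). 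Thus for each of $d=d_{R}$ and $d=d_{L}$, $\lambda_{X}(v)$ occurs as a limit of quotients $\frac{log|g_{k}(v)|}{d(g_{k},I)}$ with $d(g_{k},I)\to\infty$; taking the infimum and the supremum over unit $X\in\widetilde{\fP_{v}}$ gives $\underline{lim}_{d(g,I)\to\infty}\frac{log|g(v)|}{d(g,I)}\leq\lambda^{-}(v)$ and $\overline{lim}_{d(g,I)\to\infty}\frac{log|g(v)|}{d(g,I)}\geq\lambda^{+}(v)$ for $d\in\{d_{R},d_{L}\}$.

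For the opposite inequalities with $d_{R}$ I would invoke (8.3) and (8.6), which say $\underline{lim}_{d_{R}(g,K\cdot G_{v})\to\infty}\frac{log|g(v)|}{d_{R}(g,K\cdot G_{v})}\geq\lambda^{-}(v)$ and $\overline{lim}_{d_{R}(g,K\cdot G_{v})\to\infty}\frac{log|g(v)|}{d_{R}(g,K\cdot G_{v})}\leq\lambda^{+}(v)$. Since $K$ and $G_{v}$ are both compact, $K\cdot G_{v}$ is compact, so there is a constant $D$ with $d_{R}(x,I)\leq D$ for all $x\in K\cdot G_{v}$; because $I\in K\cdot G_{v}$ this yields $d_{R}(g,K\cdot G_{v})\leq d_{R}(g,I)\leq d_{R}(g,K\cdot G_{v})+D$ for every $g\in G$. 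Hence $d_{R}(g,I)\to\infty$ and $d_{R}(g,K\cdot G_{v})\to\infty$ are equivalent, and along any such sequence the ratio of these two quantities tends to $1$, so $\frac{log|g(v)|}{d_{R}(g,I)}$ and $\frac{log|g(v)|}{d_{R}(g,K\cdot G_{v})}$ have the same $\underline{lim}$ and the same $\overline{lim}$. Therefore $\lambda^{-}(v)\leq\underline{lim}_{d_{R}(g,I)\to\infty}\frac{log|g(v)|}{d_{R}(g,I)}$ and $\overline{lim}_{d_{R}(g,I)\to\infty}\frac{log|g(v)|}{d_{R}(g,I)}\leq\lambda^{+}(v)$, which combined with the first paragraph gives the two equalities for $d_{R}$.

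To transfer these to $d_{L}$ I would show $|d_{R}(g,I)-d_{L}(g,I)|$ is uniformly bounded on $G$ and reuse the ratio argument. Write $g=k\,exp(X)$ with $k\in K$, $X\in\fP$ by the $KP$ decomposition (5.4). The proof of (5.7) gives $|X|-c\leq d_{R}(g,I)\leq|X|+c$ with $d_{R}(k,I)\leq c$ for $k\in K$; and since $d_{L}$ is left invariant and $X\in\fP$, we get $d_{L}(g,k)=d_{L}(exp(X),I)=|X|$ by (5.5), hence $|X|-c'\leq d_{L}(g,I)\leq|X|+c'$ where $d_{L}(k,I)\leq c'$ for $k\in K$ ($c,c'$ finite as $K$ is compact). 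Thus $|d_{R}(g,I)-d_{L}(g,I)|\leq c+c'$, so $\frac{log|g(v)|}{d_{R}(g,I)}$ and $\frac{log|g(v)|}{d_{L}(g,I)}$ again share $\underline{lim}$ and $\overline{lim}$, equal to $\lambda^{-}(v)$ and $\lambda^{+}(v)$ by the previous paragraph. Finally $\lambda^{-}(v)\leq\lambda^{+}(v)$ since an infimum over a set never exceeds the supremum over that set; assembling everything yields the full chain of (8.7).

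I do not expect a genuine obstacle, since all the hard analysis already lives in (8.1)--(8.6). The two points that need care are the repeated bookkeeping fact that $\underline{lim}$ and $\overline{lim}$ of $\frac{log|g(v)|}{\,\cdot\,}$ are unchanged when the denominator is replaced by a function differing from it by a bounded amount (valid because along the relevant sequences the two denominators tend to infinity with ratio $1$), and the verification via (5.4), (5.5) and (5.7) that $d_{R}(g,I)$ and $d_{L}(g,I)$ both lie within a bounded distance of the intrinsic quantity $|X|$ that the $KP$ decomposition attaches to $g$. Compactness of $G_{v}$ is used only to guarantee that $K\cdot G_{v}$ is compact; this is precisely what fails, and is why the $d_{L}$-analogue of (8.1) can fail, when $G_{v}$ is noncompact.
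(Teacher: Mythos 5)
Your proof is correct and uses essentially the same ingredients as the paper's: (8.3) and (8.6) for the $d_{R}$ bounds, compactness of $K\cdot G_{v}$ to replace $d_{R}(g,K\cdot G_{v})$ by $d_{R}(g,I)$, the KP decomposition together with (5.5)/(5.7) to bound $|d_{R}(g,I)-d_{L}(g,I)|$, and geodesic rays $t\mapsto\exp(tX)$ for $X\in\widetilde{\fP_{v}}$ to obtain the sharp halves via (7.1). The only difference from the paper is bookkeeping order — you establish the sharp inequalities first and then the weak ones, while the paper does the reverse and invokes 2) of (7.7) to exhibit a minimizing $X$; your infimum/supremum argument avoids that citation but is not a genuinely different route.
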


\begin{proof}

	We first prove the following  weak inequalities for $\lambda^{-}(v)$ and $\lambda^{+}(v)$.
	
	(*)   $\lambda^{-}(v) \leq \underline{lim} \hspace{.05in} _{d_{R}(g, I) \rightarrow \infty} \hspace{.1in} \frac{log|g(v)|}{d_{R}(g, I)} = \underline{lim} \hspace{.05in} _{d_{L}(g, I) \rightarrow \infty} \hspace{.1in} \frac{log|g(v)|}{d_{L}(g, I)}\leq  \overline{lim} \hspace{.05in}  _{d_{L}(g, I) \rightarrow \infty} \hspace{.1in}\frac{log|g(v)|}{d_{L}(g, I)} = \overline{lim} \hspace{.05in}  _{d_{R}(g, I) \rightarrow \infty} \hspace{.1in}\frac{log|g(v)|}{d_{R}(g, I)} \leq \lambda^{+}(v)$
\newline
	
	Observe that $K \cdot G_{v}$ is compact, and hence there exists a positive constant c such that $d_{R}(I,\xi) \leq c$ and $d_{L}(I,\xi) \leq c$ for all $\xi \in K \cdot G_{v}$.  A routine argument with the triangle inequality yields

  	a)  $|d_{R}(g, K \cdot G_{v}) - d_{R}(g,I)| \leq c$ for all g $\in$ G.
	
	Let g $\in$ G be given.  By the KP decomposition of G there exist unique elements k $\in$ K and p $\in P = exp(\fP)$ such that $g = kp$.  We assert
	
	b) $|d_{R}(g,I) - d_{R}(p,I)| \leq c$ and  $|d_{L}(g,I) - d_{L}(p,I)| \leq c$
	
\noindent It will then follow immediately that

	c)  $|d_{R}(g,I) - d_{L}(g,I)| \leq |d_{R}(g,I) - d_{R}(p,I)| + |d_{R}(p,I) - d_{L}(p,I)| + |d_{L}(p,I) - d_{L}(g,I)| \leq 2c$
	
\noindent since $d_{R}(p,I) = d_{L}(p^{t},I^{t}) = d_{L}(p,I)$ by (4.1).  From a), c) and (8.1) the assertion (*) will follow immediately.

	To complete the proof of (*) it remains only to prove b).  Note that $d_{R}(p,I) \leq d_{R}(p,kp) + d_{R}(g,I) = d_{R}(I,k) + d_{R}(g,I) \leq c + d_{R}(g,I)$.  Similarly, $d_{R}(p,I) \geq - d_{R}(p,kp) + d_{R}(g,I) = - d_{R}(I,k) + d_{R}(g,I) \geq - c + d_{R}(g,I)$.  This proves the first inequality in b).  To prove the second inequality in b) observe that $d_{L}(g,I) \leq d_{L}(k,I) + d_{L}(k, kp) \leq c + d_{L}(I,p)$ and $d_{L}(g,I) \geq - d_{L}(k,I) + d_{L}(k, kp) \geq - c + d_{L}(I,p)$
\newline

	We complete the proof of  (8.7) by showing that the inequalities in (*) for $\lambda^{-}(v)$ and $\lambda^{+}(v)$ are actually equalities.
By 2) of (7.7) we may choose an element X of $\widetilde{\fP_{v}}$ such that $|X| = 1$ and $\lambda^{-}(v) = \lambda_{X}(v)$.  Let $g_{r} = exp(rX)$.  Then $d_{L}(g_{r}, I) =$ r for every integer r by (5.5).  Moreover, $\frac{log|g_{r}(v)|}{d_{L}(g_{r}, I)} = \frac{log|exp(rX)(v)|}{r} \rightarrow \lambda_{X}(v)) = \lambda^{-}(v)$ by (7.1).  This proves that $\lambda^{-}(v) = \underline{lim} \hspace{.05in} _{d_{L}(g, I) \rightarrow \infty} \hspace{.1in} \frac{log|g(v)|}{d_{L}(g, I)}$.

	We prove that the inequality for $\lambda^{+}(v)$ in (*) is sharp.  Let X in $\widetilde{\fP_{v}}$ with $|X| = 1$ be given, and define $g_{r} = exp(rX)$ for every positive integer r.  The argument above shows that $lim~_{r \rightarrow \infty} \frac{log|g_{r}(v)|}{d_{L}(g_{r}, I)} = \lambda_{X}(v)$.  Since $\lambda^{+}(v) = sup \{\lambda_{X}(v) : X \in \widetilde{\fP_{v}}, |X| =1 \}$ it follows that $\lambda^{+}(v) = \newline \overline{lim}~_{d_{L}(g, I) \rightarrow \infty}~ \frac{log|g(v)|}{d_{L}(g, I)}$.
\end{proof}

\section{Uniform growth on compact subsets of $\fM'$}

    Let O $= \{v \in \br^{n} : dim~G_{v} \leq dim~G_{w}~\rm{for~all}~w \in \br^{n} \}$.	It is well known that O is a nonempty G-invariant Zariski open subset O of $\br^{n}$.
	
\begin{proposition}  Let $v \in$ O be given.  For every number $\epsilon > 0$ there exists an open set $U \subset \br^{n}$ such that if $v' \in $ U  then $\lambda^{-}(v') \geq \lambda^{-}(v) - \epsilon$.
\end{proposition}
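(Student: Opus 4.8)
The plan is to prove that $\lambda^{-}$ is lower semicontinuous at $v$ by a compactness-and-contradiction argument in the spirit of the proof of 1) of (7.7). First I dispose of the case in which $G(v)$ is bounded: then $\widetilde{\fP_{v}} = \{0\}$ by 5) of (6.5). Since $O$ is open and, on $O$, the evaluation maps $\mathrm{ev}_{w}:\fG \to \br^{n}$, $Y \mapsto Y(w)$, have constant rank $\dim\fG - \dim\fG_{v}$, the map $w \mapsto \fG_{w} = \ker(\mathrm{ev}_{w})$ is continuous into a Grassmannian of $\fG$; hence $w \mapsto \fG_{w}^{\perp}$ is continuous and $\dim \widetilde{\fP_{w}} = \dim(\fP \cap \fG_{w}^{\perp})$ is upper semicontinuous on $O$. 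Therefore $\widetilde{\fP_{v'}} = \{0\}$ for all $v'$ in a neighbourhood of $v$, so $\lambda^{-} \equiv +\infty$ there and the assertion is vacuous. So from now on assume $G(v)$ is unbounded; then, as recorded in the Remark following the definition of $\lambda^{\pm}$ in Section 7, $G(v')$ is unbounded for all $v'$ near $v$.

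Suppose the conclusion fails for some $\epsilon > 0$: there is a sequence $v_{k} \to v$ with $\lambda^{-}(v_{k}) < \lambda^{-}(v) - \epsilon$ for all $k$. Since $O$ is open and unboundedness of the orbit is an open condition, I may assume each $v_{k} \in O$ has unbounded orbit. By 2) of (7.7) I choose, for each $k$, a unit vector $X_{k} \in \widetilde{\fP_{v_{k}}}$ with $\lambda_{X_{k}}(v_{k}) = \lambda^{-}(v_{k})$; as $\widetilde{\fP_{v_{k}}} \subset \fP$ and the unit sphere of $\fP$ is compact, after passing to a subsequence $X_{k} \to X$ with $X \in \fP$, $|X| = 1$.

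The crux of the proof — and the step I expect to be the main obstacle — is to show $X \in \widetilde{\fP_{v}}$, and this is exactly where $v \in O$ is needed. As above, all the $v_{k}$ lie in a region where $\mathrm{ev}_{w}$ has constant rank, so $w \mapsto \fG_{w}$ is continuous into the Grassmannian of $(\dim \fG_{v})$-planes of $\fG$ and $\fG_{v_{k}} \to \fG_{v}$. Hence every $Y \in \fG_{v}$ can be written $Y = \lim_{k} Y_{k}$ with $Y_{k} \in \fG_{v_{k}}$. Since $X_{k} \in \widetilde{\fP_{v_{k}}} = (\fK + \fG_{v_{k}})^{\perp}$ we get $\langle X_{k}, Y_{k}\rangle = 0$, so $\langle X, Y\rangle = 0$ in the limit; and $\langle X, \fK\rangle = 0$ since each $X_{k} \in \fP$ and $\fP \perp \fK$. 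Thus $X \perp \fK + \fG_{v}$, i.e. $X \in \widetilde{\fP_{v}}$, with $|X| = 1$.

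Finally I extract the contradiction. Since $X \in \widetilde{\fP_{v}}$ and $|X| = 1$, the definition of $\lambda^{-}$ gives $\lambda^{-}(v) \le \lambda_{X}(v)$. Apply 1) of (7.7) to the pair $(X, v)$ with the constant $\epsilon/2$: there are neighbourhoods $U$ of $v$ in $\br^{n}$ and $\mathcal{O}$ of $X$ in $\fP$ with $\lambda_{X'}(v') \ge \lambda_{X}(v) - \epsilon/2$ whenever $(X', v') \in \mathcal{O} \times U$. For $k$ large, $v_{k} \in U$ and $X_{k} \in \mathcal{O}$, so $\lambda^{-}(v_{k}) = \lambda_{X_{k}}(v_{k}) \ge \lambda_{X}(v) - \epsilon/2 \ge \lambda^{-}(v) - \epsilon/2$, contradicting $\lambda^{-}(v_{k}) < \lambda^{-}(v) - \epsilon$. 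The genuinely delicate ingredient is the Grassmannian continuity of $w \mapsto \fG_{w}$ on $O$ and the resulting approximability of vectors of $\fG_{v}$ by vectors of $\fG_{v_{k}}$; this is also precisely what fails off $O$, where $\fG_{v_{k}}$ may converge to a proper subspace of $\fG_{v}$ and the limiting $X$ need not annihilate $\fG_{v}$.
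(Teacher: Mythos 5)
Your proof is correct and follows essentially the same route as the paper's: a compactness-and-contradiction argument in which you pick unit vectors $X_{k} \in \widetilde{\fP_{v_{k}}}$ realizing $\lambda^{-}(v_{k})$ via 2) of (7.7), pass to a convergent subsequence $X_{k} \to X$, verify $X \in \widetilde{\fP_{v}}$ via the Grassmannian continuity of $w \mapsto \fG_{w}$ on $O$ (this is exactly the paper's Lemma 9.2, which you reprove inline), and close via 1) of (7.7). The only material addition is your preliminary disposal of the case $G(v)$ bounded (where $\widetilde{\fP_{v}}=\{0\}$ and $\lambda^{-}(v)$ is conventionally $+\infty$), which the paper leaves implicit; this is a harmless and arguably clarifying refinement, not a different argument.
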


\begin{lemma}  Let v $\in$ O be given and let $\{v_{r} \} \subset O$ be a sequence converging to v.  Let $\{Y_{r} \}$ be a sequence in $\fP$ converging to a vector Y $\in \fP$ such that $Y_{r} \in \widetilde{\fP_{v_{r}}}$ for every r.  Then Y $\in \widetilde{\fP_{v}}$.
\end{lemma}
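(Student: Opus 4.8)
The plan is to reduce the conclusion $Y \in \widetilde{\fP_{v}} = (\fK + \fG_{v})^{\perp}$ to the two orthogonality relations $Y \perp \fK$ and $Y \perp \fG_{v}$. The first is free: $Y \in \fP$ and $\langle \fK, \fP \rangle = 0$ by (4.2) (equivalently, each $Y_{r}$ lies in $\widetilde{\fP_{v_{r}}} \subset \fK^{\perp}$, and $\fK^{\perp}$ is closed). So the real content is the single assertion $\langle Y, Z \rangle = 0$ for every $Z \in \fG_{v}$, and my strategy is to obtain this by first proving $\fG_{v_{r}} \to \fG_{v}$ in the Grassmannian of subspaces of $\fG$ and then letting $r \to \infty$ in the relations $\langle Y_{r}, \cdot \rangle = 0$ on $\fG_{v_{r}}$.

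The decisive use of the hypothesis is that $v$ and all the $v_{r}$ lie in $O$. Since $O$ is by definition the locus on which $\dim G_{w}$ attains its global minimum, say $d$, we get $\dim \fG_{v_{r}} = d = \dim \fG_{v}$ for every $r$, with no exceptions. Passing to a subsequence, compactness of the Grassmannian $G(d, \dim \fG)$ of $d$-dimensional subspaces of $\fG$ lets me assume $\fG_{v_{r}} \to \fW$ for some $d$-dimensional $\fW \subset \fG$; concretely the orthogonal projections onto the $\fG_{v_{r}}$ converge in norm to the orthogonal projection onto $\fW$.

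Next I would show $\fW = \fG_{v}$. Given $Z \in \fW$, let $Z_{r}$ be the orthogonal projection of $Z$ onto $\fG_{v_{r}}$, so $Z_{r} \in \fG_{v_{r}}$ and $Z_{r} \to Z$; then $Z_{r}(v_{r}) = 0$ for all $r$, and letting $r \to \infty$, using $v_{r} \to v$ and $Z_{r} \to Z$, gives $Z(v) = 0$. Hence $\fW \subseteq \fG_{v}$, and the equality $\dim \fW = \dim \fG_{v}$ forces $\fW = \fG_{v}$. With $\fG_{v_{r}} \to \fG_{v}$ in hand, fix any $Z \in \fG_{v}$, choose $Z_{r} \in \fG_{v_{r}}$ with $Z_{r} \to Z$, and use $Y_{r} \in (\fK + \fG_{v_{r}})^{\perp}$ to get $\langle Y_{r}, Z_{r} \rangle = 0$ for all $r$; the limit gives $\langle Y, Z \rangle = 0$. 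Combined with $Y \perp \fK$ and $Y \in \fP \subset \fG$, this yields $Y \in (\fK + \fG_{v})^{\perp} = \widetilde{\fP_{v}}$. Since the conclusion is independent of the chosen subsequence, passing to one costs nothing.

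The hard part --- or at least the only place where the hypotheses are genuinely needed --- is the identification $\fW = \fG_{v}$. The inclusion $\fW \subseteq \fG_{v}$ holds for any $v_{r} \to v$, but the reverse inclusion can fail without the constant-dimension statement: the stabilizer dimension $\dim G_{w}$ is only upper semicontinuous in $w$ (the rank of $X \mapsto X(w)$ being lower semicontinuous), so at a special $v \notin O$ it can jump up and $\fG_{v_{r}}$ may be a proper subspace of $\fG_{v}$ for every $r$, in which case $Y \perp \fW$ says nothing about $Y \perp \fG_{v}$. The hypothesis $v \in O$ is exactly what rules this out; the rest is a routine compactness-and-limit argument.
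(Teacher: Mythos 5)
Your proof is correct and follows the same approach as the paper: both exploit the constancy of $\dim \fG_w$ on $O$ to produce, for each $Z \in \fG_v$ (or $\zeta \in \fK + \fG_v$), a sequence $Z_r \in \fG_{v_r}$ converging to $Z$, and then pass to the limit in $\langle Y_r, Z_r \rangle = 0$. You are in fact a bit more careful at the one nontrivial step: the paper asserts the existence of the approximating sequence $\{H_r\}$ directly from the equality of dimensions, whereas your Grassmannian compactness argument (convergent subsequence $\fG_{v_r} \to \fW$, the inclusion $\fW \subseteq \fG_v$ via $Z_r(v_r)=0 \Rightarrow Z(v)=0$, and equality by dimension count) supplies a complete justification of that assertion.
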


\begin{proof}  Let $\zeta \in \fK + \fG_{v}$ be given and write $\zeta = K + H$, where K $\in \fK$ and H $\in \fG_{v}$.  By the definition of O we know that dim $\fG_{v_{r}} =$ dim $\fG_{v}$ for all r, and hence there exists a sequence $\{H_{r} \}$ such that $H_{r} \in \fG_{v_{r}}$ for every r and $H_{r} \rightarrow H$ as r $\rightarrow \infty$.  Let $\zeta_{r} = K + H_{r} \in \fK + \fG_{v_{r}}$ for all r.  By definition $0 = \langle \zeta_{r} , Y_{r} \rangle$ for all r since $Y_{r} \in \widetilde{\fP_{v_{r}}}$.  Hence $\langle \zeta , Y \rangle = $ lim$_{r \rightarrow \infty} \langle \zeta_{r} , Y_{r} \rangle = 0$, which proves that $Y \in\widetilde{\fP_{v}}$.
 \end{proof}

	We now complete the proof of (9.1).  Suppose that this is false for some $\epsilon > 0$.  Then there exists a sequence $\{v_{r} \} \subset$ O such that $v_{r} \rightarrow v$ and $\lambda^{-}(v_{r}) < \lambda^{-}(v) - \epsilon$ for all r.  By 2) of (7.7) we may choose $Y_{r} \in \widetilde{\fP_{v_{r}}}$ with $|Y_{r}| = 1$ such that $\lambda^{-}(v_{r}) =  \lambda_{Y_{r}}(v_{r})$ for all r.  Let $Y_{r} \rightarrow Y \in \fP$, passing to a subsequence if necessary.  Then $|Y| = 1$ by continuity, and Y $\in \widetilde{\fP_{v}}$ by (9.2).  From 1) of (7.7) we obtain $\lambda^{-}(v) \leq \lambda_{Y}(v) \leq \underline{lim}~_{r \rightarrow \infty} \lambda_{Y_{r}}(v_{r}) =  \underline{lim}~_{r \rightarrow \infty} \lambda^{-}(v_{r}) \leq \lambda^{-}(v) - \epsilon$, which is a contradiction.
	
\begin{corollary}  Let C be a compact subset of O.  Then $\lambda^{-}$ has a minimum value on C.
\end{corollary}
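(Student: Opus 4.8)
The plan is to recognize Proposition (9.1) as precisely the assertion that $\lambda^{-}$ is lower semicontinuous on the Zariski open set $O$, and then to invoke the elementary fact that a real-valued lower semicontinuous function on a compact set attains its infimum. So the corollary carries essentially no new content beyond (9.1); the work is in checking the hypotheses and running the standard minimizing-sequence argument.

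First I would record that every $v \in C$ is nonzero, so that $\lambda^{-}$ is defined at each point of $C$: indeed $C \subset O$, and $0 \notin O$ since for the connected noncompact group $G$ the value $\dim G_{0} = \dim G$ is strictly larger than $\dim G_{w}$ for generic $w$. Next I would note that $\lambda^{-}$ is bounded below on $C$: by assertion 2) of (2.1) we have $\lambda^{-}(v) \geq -1$ for every nonzero $v$, so $m := \inf\{\lambda^{-}(v) : v \in C\}$ is a finite real number. Then I would choose a minimizing sequence $v_{r} \in C$ with $\lambda^{-}(v_{r}) \to m$, and, using compactness of $C$, pass to a subsequence with $v_{r} \to v$ for some $v \in C$. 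Finally, apply (9.1) at $v$: given $\epsilon > 0$ there is an open neighborhood $U$ of $v$ with $\lambda^{-}(v') \geq \lambda^{-}(v) - \epsilon$ for all $v' \in U$; since $v_{r} \in U$ for all large $r$, letting $r \to \infty$ gives $m \geq \lambda^{-}(v) - \epsilon$, and then letting $\epsilon \to 0$ gives $m \geq \lambda^{-}(v)$. Together with $\lambda^{-}(v) \geq m$ (as $v \in C$) this yields $\lambda^{-}(v) = m$, so the infimum is attained and is the minimum of $\lambda^{-}$ on $C$.

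I do not expect a genuine obstacle here: the only points that require a word of justification are that $C$ consists of nonzero vectors (hence $\lambda^{-}$ is defined and, by (2.1), bounded below on $C$) and that the $\epsilon$-formulation of (9.1) is exactly lower semicontinuity. Everything else is the routine "lower semicontinuous function on a compact set attains its infimum" argument, and all the analytic substance has already been absorbed into the proof of (9.1) via the semicontinuity statement 1) of (7.7) and the constancy of $\dim G_{v}$ on $O$.
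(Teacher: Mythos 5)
Your argument is correct and matches the paper's proof: both take a minimizing sequence in $C$, pass to a convergent subsequence $v_k \to v \in C$ by compactness, and invoke (9.1) (lower semicontinuity of $\lambda^{-}$ on $O$) to conclude $\lambda^{-}(v) \leq \inf_C \lambda^{-}$, hence equality. The additional remarks you make — that the vectors in $C$ are nonzero so $\lambda^{-}$ is defined, and that $\lambda^{-}$ is bounded below so the infimum is finite — are reasonable safety checks that the paper leaves implicit, but the substance of the proof is identical.
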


\begin{proof}  Let $c = inf~\{\lambda^{-}(v') : v' \in C \}$, and let $\{v_{k} \} \subset C$ be a sequence such that $\lambda^{-}(v_{k}) \rightarrow c$ as k $\rightarrow \infty$.  By the compactness of C we may assume, passing to a subsequence if necessary,  that there exists a vector v $\in$ C such that $v_{k} \rightarrow v$ as k $\rightarrow \infty$.  Then $c = lim~_{k \rightarrow \infty}~\lambda^{-}(v_{k}) \geq \lambda^{-}(v)$ by (9.1).  Equality must hold by the definition of c.
\end{proof}

	Let $\fM'$ denote the set of minimal vectors in O.

\begin{proposition}  Let C be a compact subset of $\fM'$, and let $c > 0$ be the minimum value of $\lambda^{-}$ on C.  Then for every real number $c'$ with $0 < c' < c$ there exists a positive number $R_{0}$ such that if v $\in$ C, g $\in$ G and $d_{R}(g, G_{v}) > R_{0}$, then $\frac{log|g(v)|}{d_{R}(g, G_{v})} > c'$.
\end{proposition}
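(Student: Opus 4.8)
The plan is to argue by contradiction, running the compactness argument behind (8.4) but now allowing both the vector $v$ and the direction of motion to vary, and extracting the needed uniformity from Lemma (9.2). Throughout I use that $K = G\cap O(n,\br)$ is compact, so there is a constant $c_{0}>0$ with $d_{R}(k,I)\le c_{0}$ for all $k\in K$, and hence, as in the first Remark following (2.1), $|d_{R}(g,G_{v}) - d_{R}(g,K\cdot G_{v})|\le c_{0}$ for every $g\in G$ and every nonzero $v$, with $c_{0}$ independent of $v$.

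Suppose the conclusion fails for some $c'$ with $0<c'<c$. Then for each positive integer $r$ there are $v_{r}\in C$ and $g_{r}\in G$ with $d_{R}(g_{r},G_{v_{r}})>r$ and $\frac{\log|g_{r}(v_{r})|}{d_{R}(g_{r},G_{v_{r}})}\le c'$; after passing to a subsequence, $v_{r}\to v$ for some $v\in C$ by compactness of $C$. Applying (5.6) I would write $g_{r}=k_{r}\,\exp(X_{r})\,h_{r}$ with $k_{r}\in K$, $h_{r}\in G_{v_{r}}$, $X_{r}\in\widetilde{\fP_{v_{r}}}$ and $t_{r}:=|X_{r}|=d_{R}(g_{r},K\cdot G_{v_{r}})=d_{R}(\exp(X_{r}),I)$. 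Since $t_{r}\ge d_{R}(g_{r},G_{v_{r}})-c_{0}>r-c_{0}$, we have $t_{r}\to\infty$ and, for $r$ large, $X_{r}\neq 0$; then $Y_{r}:=X_{r}/t_{r}$ is a unit vector of $\widetilde{\fP_{v_{r}}}$, and after a further subsequence $Y_{r}\to Y$ with $Y\in\fP$, $|Y|=1$. By Lemma (9.2), using that $v_{r},v\in O$, we get $Y\in\widetilde{\fP_{v}}$. Because $h_{r}$ fixes $v_{r}$ and $k_{r}$ is orthogonal, $|g_{r}(v_{r})|=|\exp(t_{r}Y_{r})(v_{r})|$.

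The heart of the matter is the eigenspace estimate, which I would run exactly as in the proofs of 1) of (7.7) and of (8.4), but with $Y_{r}$ in place of $X_{k}$ and with the varying vector $v_{r}$ replacing the fixed one. After a subsequence the $Y_{r}$ share a common number $N$ of distinct eigenvalues $\lambda_{1}^{(r)},\dots,\lambda_{N}^{(r)}$ with $\lambda_{j}^{(r)}\to\lambda_{j}$, and the eigenspaces $V_{j}^{(r)}$ have fixed dimensions and converge in the Grassmannian to subspaces $V_{j}$ with $\br^{n}=\bigoplus_{j}V_{j}$, orthogonal direct sum, and $Y=\lambda_{j}\,\mathrm{Id}$ on $V_{j}$. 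Writing $v=\sum_{j}v_{j}$ with $v_{j}\in V_{j}$ and choosing, via the index refinement of (7.7), an index $i$ with $v_{i}\neq 0$ and $\lambda_{i}=\lambda_{Y}(v)$, the convergences $v_{r}\to v$ and $V_{i}^{(r)}\to V_{i}$ force the $V_{i}^{(r)}$-component $v_{i}^{(r)}$ of $v_{r}$ to satisfy $v_{i}^{(r)}\to v_{i}\neq 0$. Hence $|g_{r}(v_{r})|^{2}\ge\exp(2t_{r}\lambda_{i}^{(r)})\,|v_{i}^{(r)}|^{2}$, so
\[
\frac{\log|g_{r}(v_{r})|}{t_{r}}\;\ge\;\lambda_{i}^{(r)}+\frac{\log|v_{i}^{(r)}|}{t_{r}}\;\longrightarrow\;\lambda_{Y}(v).
\]
Since $Y\in\widetilde{\fP_{v}}$ with $|Y|=1$, the definition of $\lambda^{-}$ and the choice of $c$ give $\lambda_{Y}(v)\ge\lambda^{-}(v)\ge c>0$; in particular $\log|g_{r}(v_{r})|\to+\infty$, so the ratios below are eventually positive, and from $d_{R}(g_{r},G_{v_{r}})\le t_{r}+c_{0}$ we obtain, for $r$ large,
\[
\frac{\log|g_{r}(v_{r})|}{d_{R}(g_{r},G_{v_{r}})}\;\ge\;\frac{\log|g_{r}(v_{r})|}{t_{r}}\cdot\frac{t_{r}}{t_{r}+c_{0}}.
\]
As $t_{r}/(t_{r}+c_{0})\to 1$, the lower limit of the left-hand side is at least $\lambda_{Y}(v)\ge c>c'$, which contradicts $\frac{\log|g_{r}(v_{r})|}{d_{R}(g_{r},G_{v_{r}})}\le c'$ for every $r$.

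The step I expect to be the main obstacle is exactly this uniformity: one must push the perturbation lemma (7.7) through while both the symmetric matrices $Y_{r}$ and the base vectors $v_{r}$ are moving, and be certain that the limiting direction $Y$ remains admissible for $v$, i.e.\ that $Y\in\widetilde{\fP_{v}}$. Lemma (9.2) supplies precisely this, and it is what lets $\lambda_{Y}(v)$ be bounded below by the single positive constant $c=\min_{C}\lambda^{-}$ furnished by (9.3). Everything else — the uniform comparison of $d_{R}(\cdot,G_{v})$ with $d_{R}(\cdot,K\cdot G_{v})$ via $c_{0}$, and the harmless factor $t_{r}/(t_{r}+c_{0})$ — is routine bookkeeping.
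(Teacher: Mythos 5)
Your proof is correct, but it takes a genuinely different route from the paper's. The paper first spends about half its proof normalizing $C$ to unit vectors, so that Lemma (9.5) — the monotonicity of $\varphi_{X}(s)=\frac{\log|\exp(sX)(v)|}{s}$ in $s>0$ for $|v|=1$ — becomes applicable; it then uses that monotonicity to deduce, for a fixed $s$ and $s\le t_{r}$, that $\frac{\log|\exp(sY_{r})(v_{r})|}{s}\le \frac{\log|\exp(t_{r}Y_{r})(v_{r})|}{t_{r}}\le c'$, passes to the limit in $r$, and then lets $s\to\infty$ to get $\lambda_{Y}(v)\le c'$, a contradiction. You instead bypass the normalization and Lemma (9.5) entirely, and re-run the Grassmannian/eigenspace convergence argument already used in the proofs of (7.7) and (8.4), now with both the unit direction $Y_{r}\in\widetilde{\fP_{v_{r}}}$ and the base vector $v_{r}$ varying; the index-refinement step of (7.7) together with $v_{r}\to v$ forces $v_{i}^{(r)}\to v_{i}\neq 0$, yielding the same lower bound $\underline{\lim}_{r}\frac{\log|g_{r}(v_{r})|}{t_{r}}\ge\lambda_{Y}(v)$ and hence the contradiction. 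Both proofs rest on (5.6) for the decomposition, on Lemma (9.2) to get $Y\in\widetilde{\fP_{v}}$ (and hence $\lambda_{Y}(v)\ge\lambda^{-}(v)\ge c$), and on (9.3) for the positive minimum $c$. The paper's route buys a softer limiting step at the cost of the normalization bookkeeping and an extra convexity lemma; your route is shorter because it reuses an argument already established, at the cost of once more manipulating convergent eigenvalue/eigenspace data. The only slight deviation from the paper is that you keep $d_{R}(g,G_{v})$ and carry along the uniform constant $c_{0}$ rather than switching at the outset to $d_{R}(g,K\cdot G_{v})$ as the paper does, but this is harmless.
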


\begin{proof}

	By the first remark following (2.1) we may replace $d_{R}(g, G_{v})$ by $d_{R}(g, K \cdot G_{v})$ in the statement of (9.4).  We make this replacement in the remainder of the proof of (9.4).

    We first reduce to the case that every vector in C has length 1.  Let $C_{1} = \{\frac{v}{|v|} : v \in C \}$.   Note that $\lambda_{X}(rv) = \lambda_{X}(v), \lambda^{-}(rv) = \lambda^{-}(v)$ and $G_{rv} = G_{v}$ for every nonzero X $\in \fP, r \in \br$, and v $\in \br^{n}$.  By the compactness of C there exists b $> 0$ such that if $|rv| = 1$ for some r $\in \br$ and some v $\in$ C, then $|r| \leq b$.
	
	Let $c'$ be a positive number with $c' < c$ and choose $\epsilon > 0$ such that $c' + \epsilon < c$.  If (9.4) is true for the compact subset $C_{1} \subset \fM'$, then there exists $R_{0} > 0$ such that if $d_{R}(g, K \cdot G_{v_{1}}) \geq R_{0}$, then  $\frac{log|g(v_{1})|}{d_{R}(g, K \cdot G_{v_{1}})} \geq c' + \epsilon$ for every $v_{1} \in C_{1}$.  Make $R_{0}$ larger if necessary so that $|\frac{log(b)}{R_{0}}| < \epsilon$.
	
	Now let v $\in$ C and g $\in$ G be given so that $R_{0} \leq d_{R}(g, K \cdot G_{v}) = d_{R}(g, K \cdot G_{v_{1}})$, where $v_{1} = rv \in C_{1}$ and $r = \frac{1}{|v|} > 0$.  Then $c' + \epsilon \leq \frac{log|g(v_{1})|}{d_{R}(g, K \cdot G_{v_{1}})} = \frac{log|r|}{d_{R}(g, K \cdot G_{v})} + \frac{log|g(v)|}{d_{R}(g, K \cdot G_{v})} \leq \frac{log(b)}{R_{0}}$ + $\frac{log|g(v)|}{d_{R}(g, K \cdot G_{v})} < \epsilon + \frac{log|g(v)|}{d_{R}(g, K \cdot G_{v})}$.  Hence (9.4) holds for all v $\in$ C if it holds for all unit vectors $v_{1} \in C_{1}$.
\newline
	
	Henceforth we assume that all vectors v in C have length 1.

\begin{lemma}  Let $v \in \fM'$ with $|v| = 1$, and let X be an element of $\fP$ with $X(v) \neq 0$.  Let $\varphi_{X}(s) = \frac{log|exp(sX)(v)|}{s}$.  Then $\varphi_{X}'(s) \geq 0$ for $s > 0$.
\end{lemma}

\begin{proof}  Let $f_{X}(s) = |exp(sX)(v)|^{2}$.  Then $\varphi_{X}(s) = \frac{1}{2}~\frac{log(f_{X})(s)}{s}$ and $\varphi_{X}'(s) = (\frac{1}{2s^{2}})~(s~\frac{f_{X}'(s)}{f_{X}(s)} - log~f_{X}(s))$.  If $\lambda_{X}(s) = s~\frac{f_{X}'(s)}{f_{X}(s)} - log~f_{X}(s)$, then it suffices to show that $\lambda_{X}(s) > 0$ for $s > 0$.  Note that $\lambda_{X}(0) = - log(|v|^{2}) = 0$, so it suffices to show that $\lambda_{X}'(s) \geq 0$ for all $s > 0$.  We calculate $\lambda_{X}'(s) = s \cdot \frac{d}{ds}~(\frac{f_{X}'(s)}{f_{X}(s)}) = s \cdot \frac{f_{X}(s)~f_{X}''(s) - f_{X}'(s)^{2}}{f_{X}(s)^{2}} = \newline 4s \cdot \frac{|exp(sX)(v)|^{2}~|Xexp(sX)(v)|^{2} - \langle Xexp(sX)(v) , exp(sX)(v) \rangle ^{2}}{|exp(sX)(v)|^{4}} \geq 0$ for $s > 0$.
\end{proof}

	We now complete the proof of (9.4).  Suppose that the assertion of (9.4) is false for some positive number $c' < c$.  Then there exist sequences $\{v_{r} \} \subset C$ and $\{g_{r} \} \subset G$  such that $|v_{r}| = 1$ for all r,  $d_{R}(g_{r}, K \cdot G_{v_{r}}) \rightarrow \infty$ as $r \rightarrow \infty$ and $\frac{log|g_{r}(v_{r})|}{d_{R}(g, K \cdot G_{v_{r}})} \leq c'$ for all r.  By (5.6) there exist elements $k_{r} \in K, h_{r} \in G_{v_{r}}, t_{r} \in \br$ and $Y_{r} \in \widetilde{\fP_{v_{r}}}$ such that $g_{r} = k_{r} exp(t_{r} Y_{r}) h_{r}$, where $|Y_{r}| = 1$ and $t_{r} = d_{R}(g_{r}, K \cdot G_{v_{r}})$ for all r.  Let $Y_{r} \rightarrow Y \in \fP$, passing to a subsequence.  Then $|Y| = 1$ by continuity, and Y $\in \widetilde{\fP_{v}}$ by (9.2).  By the compactness of C there exists v $\in$ C such that $v_{r} \rightarrow v$ as $r \rightarrow \infty$, passing to a further subsequence if necessary.
	
	Fix a positive number s.  Then $ s \leq t_{r}$ for large r since $t_{r} = d_{R}(g_{r}, K \cdot G_{v_{r}}) \rightarrow \infty$ as $r \rightarrow \infty$.  By applying (9.5) to the functions $\varphi_{Y_{r}}(s)$ we obtain $\frac{log|exp(sY)(v)|}{s} =  lim~_{r \rightarrow \infty}~\frac{log|exp(sY_{r})(v_{r})|}{s}  \leq \underline{lim}~_{r \rightarrow \infty}~\frac{log|exp(t_{r}Y_{r})(v_{r})|}{t_{r}} =  \underline{lim}~_{r \rightarrow \infty}~\frac{log|g_{r}(v_{r})|}{d_{R}(g_{r}, K \cdot G_{v_{r}})} \leq c'$.  Since this inequality is true for all $s > 0$ we see from (7.1) that $\lambda_{Y}(s) = lim~_{s \rightarrow \infty}~\frac{log|exp(sY)(v)|}{s} \leq c'$.  We conclude that $\lambda^{-}(v) \leq \lambda_{Y}(v) \leq c'$, which contradicts the fact that $\lambda^{-}(v) \geq c > c'$ by the definition of c.
\end{proof}

\begin{proposition}  Let O be the nonempty Zariski open subset of $\br^{n}$ on which $dim~G_{v}$ takes its minimum value.  Let $\fM'$ denote the set of minimal vectors in O.  Let C be a compact subset of $\fM'$.

	1)  The set G(C) is closed in V.
	
	2)  For $A > 0$ define $B_{A} = \{v \in \br^{n} : |v| \leq A \}$ and $X_{A} = B_{A} \cap G(C)$.  Then $X_{A}$ is compact, and there exists a compact set $Y_{A} \subset G$ such that $X_{A} \subset Y_{A}(C)$.
\end{proposition}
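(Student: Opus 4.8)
The plan is to derive both assertions from a single uniform statement: \emph{for each $A > 0$ there is a compact set $Y_A \subset G$ such that $g(v) \in Y_A(C)$ whenever $v \in C$, $g \in G$ and $|g(v)| \le A$.} This in turn follows once $d_R(g, G_v)$ is bounded uniformly over $v \in C$ in terms of $A$: if $d_R(g, G_v) \le R$, one chooses $h \in G_v$ with $d_R(g,h) = d_R(g, G_v)$ (such $h$ exists because $G_v$ is closed in $G$ and $(G, d_R)$ is complete by (3.1)), so that $g h^{-1}$ lies in the closed ball $Y_A := \{x \in G : d_R(x, I) \le R\}$, which is compact by (3.1) and Hopf--Rinow, and $g(v) = (g h^{-1})(h(v)) = (g h^{-1})(v) \in Y_A(v) \subset Y_A(C)$.

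To obtain the uniform bound I would first split $C = C_u \sqcup C_b$, where $C_u = \{v \in C : G(v)$ unbounded$\}$ and $C_b = \{v \in C : G(v)$ bounded$\}$. Both pieces are compact: the set of $v$ with $G(v)$ unbounded is open in $\br^n$ by the remark following the definitions in Section 7, so $C_b$ is relatively closed; and $C_u$ is relatively closed in $\fM'$ since if $v_r \in C_u$ with $v_r \to v \in \fM'$ and $Y_r \in \widetilde{\fP_{v_r}}$ are unit vectors (nonzero by 5) of (6.5)), then a subsequential limit $Y$ is a unit vector lying in $\widetilde{\fP_v}$ by (9.2), so $\widetilde{\fP_v} \neq \{0\}$ and $G(v)$ is unbounded. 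For $v \in C_b$ one has $G = K \cdot G_v$ by 3) of (6.5), so every $g \in G$ is $g = kh$ with $k \in K$, $h \in G_v$, and $d_R(g, G_v) \le d_R(kh, h) = d_R(k, I) \le c_0$, where $c_0 := \sup_{k \in K} d_R(k, I) < \infty$ is independent of $v$. For $v \in C_u$ (assuming $C_u \neq \emptyset$; otherwise take $R = c_0$ below) the function $\lambda^-$ attains a minimum value $c$ on the compact set $C_u \subset O$ by (9.3), and $c > 0$ by (7.6) since every element of $C_u$ is a minimal vector with unbounded orbit. Fixing $c'$ with $0 < c' < c$, (9.4) supplies $R_0 > 0$ such that $d_R(g, G_v) > R_0$ implies $\log|g(v)| > c' \, d_R(g, G_v)$ for all $v \in C_u$. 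Then, setting $R := \max\{c_0,\, R_0,\, (\log A)/c'\}$, I claim $|g(v)| \le A$ forces $d_R(g, G_v) \le R$ for every $v \in C$: this is immediate for $v \in C_b$, and for $v \in C_u$ a value $d_R(g, G_v) > R$ would give $\log|g(v)| > c' \, d_R(g, G_v) > c' \cdot (\log A)/c' = \log A$, contradicting $|g(v)| \le A$.

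With $Y_A$ and $R$ chosen as above, the first paragraph yields $X_A \subset Y_A(C)$. Since the action map $G \times \br^n \to \br^n$ is continuous and $Y_A \times C$ is compact, $Y_A(C)$ is compact, hence closed; as $Y_A(C) \subset G(C)$ we get $Y_A(C) \cap B_A \subset G(C) \cap B_A = X_A$ and, conversely, $X_A \subset Y_A(C) \cap B_A$, so $X_A = Y_A(C) \cap B_A$ is the intersection of a compact set with a closed set, hence compact; this is assertion 2). For 1), any sequence in $G(C)$ that converges in $\br^n$ is bounded, so it lies in some $X_A$, which is closed, whence the limit lies in $X_A \subset G(C)$; thus $G(C)$ is closed.

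I expect the only genuinely delicate point to be the possible presence in $C$ of vectors with bounded $G$-orbit, for which $\lambda^-$ and the estimate (9.4) degenerate; these must be separated into the compact piece $C_b$ and handled by the crude bound $d_R(g, G_v) \le c_0$. Verifying that $C_u$ and $C_b$ are both genuinely compact — i.e. the relative clopenness of ``$G(v)$ unbounded'' inside $\fM'$ — is the one structural point needing (9.2); everything else is bookkeeping around (9.4) and the completeness of $(G, d_R)$.
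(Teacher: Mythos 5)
Your proof is correct and takes essentially the same route as the paper: use (9.4) to force a uniform bound $d_R(g,G_v)\le R$ when $|g(v)|\le A$, then factor $g=\zeta h$ with $h\in G_v$ minimizing and $\zeta=gh^{-1}$ in a compact Hopf--Rinow ball $\{d_R(\cdot,I)\le R\}$, and conclude. The two departures are organizational and both improve on the paper's exposition. First, you prove a single uniform claim and derive 1) and 2) from it, whereas the paper proves 1) directly and then reuses the resulting closedness in the proof of 2). Second, your explicit split $C=C_u\sqcup C_b$ makes precise a point the paper handles only implicitly: for minimal $v$ with bounded $G$-orbit one has $\widetilde{\fP_v}=0$ by (6.5), so $\lambda^-(v)=\inf\emptyset$ must be read as $+\infty$ and $d_R(g,K\cdot G_v)\equiv 0$, making (9.4) vacuous for such $v$; the paper simply invokes (9.4) on all of $C$ without comment, relying on this convention. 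Your verification that $C_u$ is relatively closed in $\fM'$ via (9.2) is the one nontrivial piece of bookkeeping the paper skips, and it closes the gap cleanly.
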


$\mathit{Remarks}$

	1) Both parts of this result fail if C is a single point $\{v \}$, where the orbit G(v) is not closed in V.   Part 1) clearly fails in this case, so we address part 2).  Let w be a vector in $\overline{G(v)} - G(v)$ and let $\{g_{r} \} \subset G$ be a sequence such that $g_{r}(v) \rightarrow w$ as $r \rightarrow \infty$.  If $A > |w|$, then $g_{r}(v) \in B_{A}$ for large r, but since w $\in \overline{G(v)} - G(v)$  it is easy to see that we can't write $g_{r}(v) = g'_{r}(v)$ for some sequence $\{g'_{r} \}$ in a compact subset of G. To prove a result of the type of (9.6) we are thus forced to consider only vectors whose G-orbits are closed in $\br^{n}$.  All closed G-orbits must intersect $\fM$ and considering only vectors in $\fM'$ seems to be a reasonable normalizing hypothesis.

	2)  In general, part 1) of (9.6) is false for the set $G(\fM')$.  For example, let $G = GL(n,\br)$ act by conjugation on $M(n,\br)$.  Then $G(\fM')$ contains a nonempty Zariski open subset of $M(n,\br)$, but $G(\fM') \neq M(n,\br)$ since G has nonclosed orbits in $M(n,\br)$.  Hence $G(\fM')$ cannot be closed in $M(n,\br)$.
\newline

	We now begin the proof of (9.6).

	1)  Let $\{g_{r} \} \subset G$ and $\{v_{r} \} \subset C$ be sequences such that $g_{r}(v_{r}) \rightarrow w \in \overline{G(C)}$ as $r \rightarrow \infty$.  If $d_{R}(g_{r}, G_{v_{r}}) \rightarrow \infty$, passing to a subsequence, then $|g_{r}(v_{r})| \rightarrow \infty$ as $r \rightarrow \infty$ by (9.4).  Therefore since $\{|g_{r}(v_{r})| \}$ is bounded there exists B $>0$ such that $d_{R}(g_{r}, G_{v_{r}}) \leq B$ for all r.
	
	Choose $h_{r} \in G_{v_{r}}$ such that $d_{R}(g_{r}, h_{r}) = d_{R}(g_{r}, G_{v_{r}}) \leq B$ for all r.  If $\zeta_{r} = g_{r} h_{r}^{-1}$, then $d_{R}(\zeta_{r}, I) = d_{R}(g_{r},  h_{r}) \leq B$.  By the completeness of $(G,d_{R})$ and the Hopf-Rinow theorem the set $C = \{g \in G : d_{R}(g, I) \leq B \}$ is compact in G.  Let $\zeta_{r} \rightarrow \zeta \in C$ as $r \rightarrow \infty$, passing to a subsequence.   Finally, $w = lim~_{r \rightarrow \infty}~g_{r}(v_{r}) =  lim~_{r \rightarrow \infty}~(\zeta_{r} h_{r})(v_{r}) = \zeta(v) \in G(v) \subset G(C)$.
	
\hspace{.15in}	2) By 1) $X_{A}$ is a closed subset of $B_{A}$, and hence $X_{A}$ is compact.  By (9.3) there exists $c_{1} > 0$ such that $\lambda^{-}(v) \geq c_{1}$ for all v $\in$ C.  Choose $R_{1} > 0$ such that $exp(\frac{R_{1} c_{1}}{2}) > A$.  By making $R_{1}$ still larger we may assume by (9.4) that if v $\in$ C, g $\in$ G are elements such that $d_{R}(g, G_{v}) > R_{1}$, then $\frac{log|g(v)|}{d_{R}(g, G_{v})} \geq \frac{c_{1}}{2}$.   Let $Y_{A} = \{g \in G : d_{R}(g, I) \leq R_{1} \}$.  The set $Y_{A}$ is compact in G by the Hopf-Rinow theorem.

	We assert that $X_{A} \subset Y_{A}(C)$.  Let w $\in X_{A}$ be given.  Then there exists v $\in$ C and g $\in$ G such that $w = g(v)$, and moreover $|g(v)| \leq A$ by the definition of $X_{A}$.  We show first that $d_{R}(g, G_{v}) \leq R_{1}$.  If this were not the case, then by the choice of $R_{1}$ we would have $|g(v)| \geq exp(\frac{c_{1}}{2} d_{R}(g, G_{v})) > exp(\frac{c_{1} R_{1}}{2}) > A$.  This contradiction shows that $d_{R}(g, G_{v}) \leq R_{1}$.
	
	The remainder of the proof is similar to the proof of 1), and we omit some details.  Choose h $\in G_{v}$ such that $d_{R}(g, h) = d_{R}(g, G_{v}) \leq R_{1}$.  If $\zeta = gh^{-1}$, then $d_{R}(\zeta, I) \leq R_{1}$ and $\zeta \in Y_{A}$.  Finally, $w = g(v) = (\zeta h)(v) = \zeta (v) \in Y_{A}(C)$.

\section{Applications}

$\mathit{Criteria~for~detecting~closed~orbits}$	

\begin{proposition}  Let G be a closed, connected, self adjoint, noncompact subgroup of $GL(n,\br)$, and let v be a nonzero vector such that $\lambda^{-}(v) > 0$.  Then G(v) is a closed subset of $\br^{n}$.
\end{proposition}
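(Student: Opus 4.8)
The plan is to derive this directly from assertion 3) of the main result (8.1), together with a completeness/Hopf--Rinow compactness argument of exactly the same kind used in part 1) of (9.6). The point of (8.1) is that $\lambda^{-}(v)$ controls the growth of $\log|g(v)|$ in terms of $d_{R}(g,G_{v})$; positivity of $\lambda^{-}(v)$ will therefore force $|g(v)|\to\infty$ whenever $d_{R}(g,G_{v})\to\infty$, and so any limit point $w$ of $G(v)$ must come from a sequence $\{g_{r}\}$ with $d_{R}(g_{r},G_{v})$ bounded, which can then be corrected by the stabilizer into a convergent sequence in $G$.

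First I would dispose of the possibility that $G(v)$ is bounded: in that case $G(v)$ is compact by (2.2), hence closed, and there is nothing to prove. So assume $G(v)$ is unbounded, which is what allows us to invoke (8.1). Fix a real number $c$ with $0<c<\lambda^{-}(v)$. By 3) of (8.1) we have $\lambda^{-}(v)\le \underline{lim}\,_{d_{R}(g,G_{v})\to\infty}\frac{\log|g(v)|}{d_{R}(g,G_{v})}$, so by the definition of the lower limit there is a number $R_{0}>0$ such that $\frac{\log|g(v)|}{d_{R}(g,G_{v})}>c$, i.e.\ $|g(v)|>\exp(c\, d_{R}(g,G_{v}))$, for every $g\in G$ with $d_{R}(g,G_{v})>R_{0}$. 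In particular $d_{R}(g_{r},G_{v})\to\infty$ forces $|g_{r}(v)|\to\infty$ for any sequence $\{g_{r}\}\subset G$.

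Next, let $w\in\overline{G(v)}$ and choose $\{g_{r}\}\subset G$ with $g_{r}(v)\to w$. Then $\{|g_{r}(v)|\}$ is bounded, so by the previous paragraph the sequence $\{d_{R}(g_{r},G_{v})\}$ is bounded, say by $B$ (otherwise a subsequence would send $|g_{r}(v)|$ to $\infty$). Since $G_{v}$ is closed in $G$ and $(G,d_{R})$ is complete by (3.1), for each $r$ there is $h_{r}\in G_{v}$ with $d_{R}(g_{r},h_{r})=d_{R}(g_{r},G_{v})\le B$. Put $\zeta_{r}=g_{r}h_{r}^{-1}$; by right invariance of $d_{R}$ we get $d_{R}(\zeta_{r},I)=d_{R}(g_{r},h_{r})\le B$, so $\{\zeta_{r}\}$ lies in the closed $d_{R}$-ball of radius $B$ about $I$, which is compact by the Hopf--Rinow theorem. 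Passing to a subsequence, $\zeta_{r}\to\zeta\in G$. Since $h_{r}(v)=v$ we have $g_{r}(v)=\zeta_{r}(v)$ for all $r$, hence $w=\lim_{r}\zeta_{r}(v)=\zeta(v)\in G(v)$, and therefore $G(v)$ is closed.

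I do not expect a genuine obstacle here, since (8.1) carries the analytic weight; the only steps needing care are the passage from the liminf inequality of (8.1) to honest exponential lower growth of $|g(v)|$ once $d_{R}(g,G_{v})$ exceeds a fixed $R_{0}$, and then the observation that this pins $\{d_{R}(g_{r},G_{v})\}$ to a bounded set so that Hopf--Rinow applies to the corrected sequence $\{\zeta_{r}\}$. This is precisely the mechanism that, via (10.2), yields the orbit-closedness criterion asserted in (2.5) and in remark 5) following (2.1).
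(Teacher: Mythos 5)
Your proof is correct and matches the paper's proof of (10.1) essentially line for line: you use 3) of (8.1) with $\lambda^{-}(v) > 0$ to force $|g_{r}(v)| \rightarrow \infty$ whenever $d_{R}(g_{r}, G_{v}) \rightarrow \infty$, conclude that for a convergent sequence $g_{r}(v) \rightarrow w$ the distances $d_{R}(g_{r}, G_{v})$ stay bounded, and then correct by $h_{r} \in G_{v}$ to place $\zeta_{r} = g_{r} h_{r}^{-1}$ in a compact $d_{R}$-ball via completeness and Hopf--Rinow. Your preliminary disposal of the bounded case is a small but welcome addition, since (8.1) is stated only for unbounded $G(v)$ and the paper's proof of (10.1) does not address this explicitly.
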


\begin{proof}  The proof here is also similar to the proof of 1) and 2) in (9.6) and we omit some details. Let $\{g_{r} \}$ be a sequence in G such that $g_{r}(v) \rightarrow w \in \overline{G(v)}$ as $r \rightarrow \infty$.  If $d_{R}(g_{r}, G_{v}) \rightarrow \infty$, passing to a subsequence, then $|g_{r}(v)| \rightarrow \infty$ by 2) and 3) of (8.1) since lim inf   $_{r \rightarrow \infty} \frac{log~|g_{r}(v)|}{d_{R}(g_{r}, G_{v})} \geq \lambda^{-}(v) > 0$.  Since the sequence $\{g_{r}(v) \}$ is bounded in V there exists a positive constant $c_{1}$  such that  $d_{R}(g_{r}, G_{v}) \leq c_{1}$ for r $\geq$ N.

	Let $h_{r} \in G_{v}$ be an element such that $d_{R}(g_{r}, h_{r}) \leq c_{1}$ for all r.  If $\zeta_{r} = g_{r} h_{r}^{-1}$, then $d_{R}(\zeta_{r}, I) \leq c_{1}$.  Let $\zeta_{r} \rightarrow \zeta \in G$, passing to a subsequence if necessary.  Finally, $g_{r}(v) = (\zeta_{r} h_{r})(v) = \zeta_{r}(v) \rightarrow \zeta (v) \in$ G(v).
\end{proof}

\begin{proposition}  Let G be a closed, self adjoint, noncompact subgroup of $GL(n,\br)$ with finitely many connected components, and let $G_{0}$ denote the connected component that contains the identity.   Let v be a nonzero vector of $\br^{n}$ such that the orbit G(v) is unbounded.  Then the following assertions are equivalent.

 	1)  G(v) is closed in $\br^{n}$.
	
	2)  $\lambda^{-}(w) > 0$ for some w $\in$ $G_{0}$(v), where $\lambda^{-} : \br^{n} \rightarrow \br$ is defined by the connected group $G_{0}$.
	
	3)  G$_{0}$(v) is closed in $\br^{n}$.
	
	4)  G(v) contains an element w that is minimal with respect to both G$_{0}$ and G.
\end{proposition}

\begin{proof}  We prove the result in the cyclic order 1) $\Rightarrow$ 4), 4) $\Rightarrow$ 3), 3) $\Rightarrow$ 2) and 2) $\Rightarrow$ 1).

	1) $\Rightarrow$ 4).  If 1) holds, then G(v) contains a minimal vector w by (6.2).  The vector w is also minimal for $G_{0}$ since $G_{0}(w) \subset G(w) = G(v)$.
	
	4) $\Rightarrow$ 3).  Let w $\in$ G(v) be an element that is minimal for both $G_{0}$ and G.  Then $\lambda^{-}(w) > 0$ by 2) of (8.1) applied to $G_{0}$, and $G_{0}(w)$ is closed in $\br^{n}$ by (10.1).  If $w = g(v)$ for g $\in$ G, then $G_{0}(v) = (g^{-1} G_{0} g)(v) = g^{-1} G_{0}(w)$ is closed in $\br^{n}$.
	
	3)  $\Rightarrow$ 2).  If $G_{0}(v)$ is closed in $\br^{n}$, then $G_{0}$ contains a minimal vector w by (6.2), and $\lambda^{-}(w) > 0$ by 2) of (8.1).
	
	2)  $\Rightarrow$ 1).  If $\lambda^{-}(w) > 0$ for some w $\in G_{0}(v)$, then $G_{0}(v) = G_{0}(w)$ is closed in $\br^{n}$ by (10.1).  Since G has finitely many connected components we may write $G = \bigcup_{i=1}^{n} g_{i} G_{0}$ for suitable elements $\{g_{1}, ... , g_{N} \}$.  It follows that $G(v) = \bigcup_{i=1}^{N} g_{i}~G_{0}(v)$ is closed in $\br^{n}$.
\end{proof}

$\mathit{An~ application~ to~ representation~theory}$
\newline

	A more general version of the next result is known (see for example (2.1) in [RS]), but the statement is more complicated and the proof is less elementary.

\begin{theorem} Let V be a finite dimensional vector space over $\br$. Let G be a connected, noncompact, semisimple Lie group, and let $\rho : G \rightarrow GL(V)$ be a C$^{\infty}$ homomorphism.  Then there exists an inner product $\langle , \rangle$ on V such that $\rho(G)$ is invariant under the involutive automorphism $\theta : GL(V) \rightarrow$ GL(V) given by $\theta(g) = (g^{t})^{-1}$.
\end{theorem}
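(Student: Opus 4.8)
The plan is to reduce the statement to a fact about the differential $\rho_{*}:\fG\to\mathrm{End}(V)$ (where $\fG$ is the Lie algebra of $G$ and $\rho(\exp X)=\exp(\rho_{*}X)$). Fix a maximal compact subgroup $K\subseteq G$ with Lie algebra $\fK$, and the associated Cartan decomposition $\fG=\fK\oplus\fP$, with bracket relations $[\fK,\fK]\subseteq\fK$, $[\fK,\fP]\subseteq\fP$, $[\fP,\fP]\subseteq\fK$. It suffices to find an inner product $\langle\,,\rangle$ on $V$ for which $\rho_{*}(\fK)$ consists of skew symmetric operators and $\rho_{*}(\fP)$ of symmetric operators: granting this, $\rho(G)$ is generated as a group by the elements $\rho(\exp X)$, $X\in\fG$, since $G$ is connected, and $\theta(\rho(\exp X))=(\exp((\rho_{*}X)^{t}))^{-1}$ equals $\exp(\rho_{*}X)=\rho(\exp X)$ when $X\in\fK$ (as then $(\rho_{*}X)^{t}=-\rho_{*}X$) and equals $\exp(-\rho_{*}X)=\rho(\exp(-X))$ when $X\in\fP$; thus $\theta$ carries the generating set of $\rho(G)$ into $\rho(G)$, whence $\theta(\rho(G))\subseteq\rho(G)$, and equality holds because $\theta$ is an involution.

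The skew symmetry of $\rho_{*}(\fK)$ is the easy half. Averaging an arbitrary inner product on $V$ over the compact group $\rho(K)$ against Haar measure yields a $\rho(K)$-invariant inner product, and differentiating the invariance forces $\rho_{*}(X)^{t}=-\rho_{*}(X)$ for $X\in\fK$. The trouble is that this construction controls only $\fK$, and there is no compact subgroup of $G$ whose Lie algebra is $\fP$; the symmetry of $\rho_{*}(\fP)$ has to be obtained differently, and this is the crux of the proof.

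The device is to complexify. Let $V_{\bc}=V\otimes_{\br}\bc$, with conjugation $\sigma$ fixing $V$, and let $\fG_{\bc}=\fG\otimes_{\br}\bc$, so that $\rho_{*}$ extends $\bc$-linearly to $\rho_{*}:\fG_{\bc}\to\mathrm{End}(V_{\bc})$. Then $\fU:=\fK\oplus i\fP$ is a real Lie subalgebra of $\fG_{\bc}$ (immediate from the bracket relations above), and the Killing form of $\fG_{\bc}$ is negative definite on $\fU$, so $\fU$ is a compact real form of $\fG_{\bc}$; let $\widetilde{U}$ be its simply connected Lie group, which is compact by Weyl's theorem. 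Since $\widetilde{U}$ is simply connected, $\rho_{*}|_{\fU}$ integrates to a representation $\widetilde{\rho}:\widetilde{U}\to GL(V_{\bc})$, and averaging a positive definite Hermitian form over $\widetilde{U}$ produces a $\widetilde{U}$-invariant positive definite Hermitian form $h$ on $V_{\bc}$. Differentiating the invariance, $\rho_{*}(Y)$ is skew Hermitian for every $Y\in\fU$; taking $Y=X\in\fK$ shows $\rho_{*}(X)$ is skew Hermitian, and taking $Y=iX$ with $X\in\fP$ shows $\rho_{*}(X)=\frac{1}{i}\rho_{*}(iX)$ is Hermitian.

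Finally I would descend back to $V$. Each $\rho_{*}(X)$, $X\in\fG$, preserves the real subspace $V$ because $\rho$ has values in $GL(V)$. Set $\langle v,w\rangle=\mathrm{Re}\,h(v,w)$ for $v,w\in V$; this is a genuine inner product on $V$ ($h(w,v)=\overline{h(v,w)}$ makes it symmetric, and $h(v,v)>0$ is already real for $v\neq0$). For $X\in\fK$ and $v,w\in V$ the identity $h(\rho_{*}(X)v,w)=-h(v,\rho_{*}(X)w)$ holds among vectors of $V$; taking real parts gives $\langle\rho_{*}(X)v,w\rangle=-\langle v,\rho_{*}(X)w\rangle$, so $\rho_{*}(\fK)$ is skew symmetric for $\langle\,,\rangle$, and the analogous computation with $h(\rho_{*}(X)v,w)=h(v,\rho_{*}(X)w)$ for $X\in\fP$ gives $\rho_{*}(\fP)$ symmetric. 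Combined with the first paragraph this proves the theorem. I expect the main obstacle to be exactly this complexification detour and its reversal: recognizing $\fK\oplus i\fP$ as a compact real form, integrating $\rho_{*}|_{\fU}$ to the compact group $\widetilde{U}$, and checking that taking real parts of the $\widetilde{U}$-invariant Hermitian form returns a positive definite inner product on $V$ with the desired symmetry and skewness properties. The remaining ingredients — Haar averaging, the Cartan bracket relations, and exponentiating Lie algebra identities — are routine.
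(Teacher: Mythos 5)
Your proof is correct and takes a genuinely different route from the paper.

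The paper's proof of this theorem is a three-line appeal to external results: the semisimplicity of $\fH = d\rho(\fG)$ gives $\fH = [\fH,\fH]$; Chevalley's algebraicity theorem (Theorem 15, section 14 of [C]) then makes $\rho(G)$ an algebraic subgroup of $GL(V)$; and the conclusion is handed off to the main theorem of Mostow's ``Self adjoint groups'' [M2]. Your proof, by contrast, is self-contained within Lie theory: fix the Cartan decomposition $\fG = \fK \oplus \fP$, pass to the compact real form $\fU = \fK \oplus i\fP$ of $\fG_{\bc}$, invoke Weyl's theorem to make the simply connected group $\widetilde{U}$ with Lie algebra $\fU$ compact, integrate $\rho_*|_{\fU}$ there (possible precisely because $\widetilde{U}$ is simply connected), average a Hermitian form over $\widetilde{U}$, and descend to $V$ by taking real parts. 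This replaces Chevalley plus Mostow with the Weyl unitary trick and avoids algebraic-group machinery entirely. What the paper's route buys is brevity and the comfort of citing theorems that apply well beyond the semisimple case; what yours buys is transparency, a proof internal to the differential-geometric framework of the rest of the paper, and independence from [C], [B], and [M2]. The complexification detour, the recognition of $\fK\oplus i\fP$ as a compact real form, and the real-part descent are exactly the nonobvious steps, and you carry them all out correctly.

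One small slip in your first paragraph: you declare the generating set to be $\{\rho(\exp X) : X \in \fG\}$ but compute $\theta(\rho(\exp X))$ only for $X \in \fK$ and $X \in \fP$, so as written the sentence ``thus $\theta$ carries the generating set into $\rho(G)$'' does not quite follow. The cleanest repair is to observe that once $\rho_*(\fK)$ is skew and $\rho_*(\fP)$ symmetric, one has for arbitrary $X = X_K + X_P$ that $(\rho_* X)^t = -\rho_* X_K + \rho_* X_P$, hence $\theta(\rho(\exp X)) = \exp\bigl(-(\rho_* X)^t\bigr) = \rho(\exp(X_K - X_P)) \in \rho(G)$; equivalently $\theta \circ \rho = \rho \circ \Theta$ where $\Theta$ is the Cartan involution of $G$ integrating $X_K + X_P \mapsto X_K - X_P$, so $\theta(\rho(G)) = \rho(G)$ at once. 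Alternatively, note that $\exp(\fK) \cup \exp(\fP)$ already generates $G$, since the Cartan decomposition gives $G = K\exp(\fP)$ and $\exp : \fK \to K$ is onto because $K$ is connected and compact. Either fix is one line, and the rest of the argument stands.
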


$\mathit{Remark}$  As usual, $g^{t} : V \rightarrow V$ denotes the metric transpose of $g : V \rightarrow V$ relative to the inner product $\langle , \rangle$.  It is well known that homomorphic images of semisimple groups are also semisimple, and hence the group $H = \rho(G)$ is a connected, semisimple subgroup of GL(V).  If we fix an orthonormal basis $\{v_{1},~...~,v_{n} \}$ of V relative to $\langle , \rangle$, then we obtain a C$^{\infty}$ homomorphism $\rho' : G \rightarrow GL(n,\br)$ given  by $\rho'(g)_{ij} = \langle \rho(g)(v_{j}),v_{i} \rangle$ for $1 \leq i , j \leq n$.  The subgroup $H' = \rho'(G)$ of $GL(n,\br)$ is self adjoint and semisimple, and the semisimplicity implies that H$'$ is a closed subgroup of $GL(n,\br)$ by the main theorem in section 6 of [M1].  Hence we may apply the results above to the subgroup $H' = \rho'(G)$ of $GL(n,\br)$

\begin{proof}   The semisimplicity of $\fH = d\rho(\fG)$ implies that $\fH = [\fH,\fH]$, the derived algebra of $\fH$.  By Theorem 15, section 14 of [C] the group $H = \rho(G)$ is algebraic (see also Corollary 7.9, chapter II of [B]).  The assertion of the theorem now follows from the main result of [M2].
\end{proof}

\section{Appendix I}

$\mathit{The~asymmetry~of~d_{r}~and~d_{L}~in~the~main~result}$
\newline
	
	We first obtain necessary conditions for the main result to hold if $d_{R}(g, G_{v})$ is replaced by $d_{L}(g, G_{v})$.  More precisely we show
	
\vspace{.2in}
	
\begin{lemma}
\end{lemma}  Let v be a nonzero vector in $\br^{n}$ such that $\lambda^{-}(v) > 0$.  Suppose there exist positive constants a,b so that 3) of (8.1) holds if a replaces $\lambda^{-}(v)$, b replaces $\lambda^{+}(v)$ and $d_{L}(g, G_{v})$ replaces $d_{R}(g, G_{v})$.  Then there exist positive constants A,C such that if $d_{R}(g, G_{v}) \geq A$ and  $d_{L}(g, G_{v}) \geq A$, then

	(a)  $\frac{1}{C} \leq \frac{d_{R}(g, G_{v})}{d_{L}(g, G_{v})} \leq C$
	
	(b)  $\frac{1}{C} \leq \frac{log|g^{t}(v)|}{log|g(v)|} \leq C$
\newline
	
$\mathit{Remark}$  The proof of the lemma will show that the conditions (a) and (b) of the lemma are also sufficient for the replacement of $d_{R}$ by $d_{L}$ to hold in the statement of (8.1).
	
\begin{proof}

	By 3) of (8.1) and the hypothesis of the lemma there exist positive constants $a_{1},b_{1},a_{2},b_{2}$ and A such that
	
	(1) \hspace{.5in}  $a_{1} \leq \frac{log~|g(v)|}{d_{R}(g, G_{v})} \leq b_{1}$ \hspace{.3in}  if $d_{R}(g, G_{v}) \geq A$
	
	 (2) \hspace{.5in}  $a_{2} \leq \frac{log~|g(v)|}{d_{L}(g, G_{v})} \leq b_{2}$ \hspace{.3in}  if $d_{L}(g, G_{v}) \geq A$

	From (1) and (2) we obtain
	
	(3) \hspace{.33in}  $\frac{a_{2}}{b_{1}} \leq \frac{d_{R}(g, G_{v})}{d_{L}(g, G_{v})} \leq \frac{b_{2}}{a_{1}}$ \hspace{.3in}  if $d_{R}(g, G_{v}) \geq A$ and  $d_{L}(g, G_{v}) \geq A$
	
From (3) we obtain assertion (a).
\newline

	To prove assertion (b) we recall from (4.1) that $d_{R}(g,h) = d_{L}(g^{t},h^{t})$ for all g,h $\in$ G and recall from (1) of (6.1) that $G_{v} = G_{v}^{t}$.  If $d_{R}(g, G_{v}) \geq A$ and  $d_{L}(g, G_{v}) \geq A$, then $d_{R}(g^{t}, G_{v})=  d_{L}(g, G_{v}^{t}) = d_{L}(g, G_{v}) \geq A $ and similarly $d_{L}(g^{t}, G_{v})=  d_{R}(g, G_{v}) \geq A$.  From (1) and (2) we obtain
	
	(4)  \hspace{.33in}  $a_{1} \leq \frac{log~|g^{t}(v)|}{d_{R}(g^{t}, G_{v})} = \frac{log~|g^{t}(v)|}{d_{L}(g, G_{v})} \leq b_{1}$ \hspace{.2in} if $d_{R}(g, G_{v}) \geq A$ and  $d_{L}(g, G_{v}) \geq A$
\newline

	From (2) and (4) we obtain
	
	$\frac{a_{1}}{b_{2}} \leq \frac{log|g^{t}(v)|}{log|g(v)|} \leq \frac{b_{1}}{a_{2}}$ if $d_{L}(g, G_{v}) \geq A$ and  $d_{R}(g, G_{v}) \geq A$.
	
This proves assertion (b) and completes the proof of  (11.1).
\end{proof}
	
	Next we  show that the second condition of (11.1) fails for a certain vector v $\in M(3,\br)
\approx \br^{9}$ if $G = GL(3,\br)$ acts on $M(3,\br)$ by conjugation.  A similar argument shows that the second condition of (11.1) fails for any n $\geq 3$ for the action of $GL(n,\br)$ on $M(n, \br)$ by conjugation.

	Let $v = \left (\begin{array} {ccc} 0 & 0 & 0 \\ 0 & 0 & 1\\ 0 & -1 & 0 \\ \end{array} \right)$, and let  $k = \left (\begin{array} {ccc} 0 & 1 & 0 \\ 1 & 0 & 0\\ 0 & 0 & -1 \\ \end{array} \right) \in SO(3,\br)$.  \newline For each positive integer N let $B_{N} = diag(N+2, 2 ,1) = \left (\begin{array} {ccc} N+2 & 0 & 0 \\ 0 & 2 & 0\\ 0 & 0 & 1 \\ \end{array} \right)$.  Let $g_{N}(s) = k~ exp(sB_{N})$.  We will show
	
	(i)  $\lambda^{-}(v) > 0$.
	
	(ii) $\frac{log |g_{N}(s)^{t}(v)|}{log |g_{N}(s)(v)|} \rightarrow N+1$ as s $\rightarrow \infty$.
	
	(iii)  $d_{R}(g_{N}(s), G_{v}) \rightarrow \infty$ and $d_{R}(g_{N}(s)^{t}, G_{v})  \rightarrow \infty$ as s $\rightarrow \infty$
	
	 Since N is arbitrary we will obtain a contradiction to the uniform boundedness condition (b) of (11.1).
\newline
	
	 We prove (i).  The Lie algebra $\fG = M(3,\br)$ acts on $M(3,\br)$ by the adjoint action ; i.e. if A,B $\in M(3,\br)$, then $A(B) = A B - B A$.   By example 3 in section 1 of [EJ] a vector Z $\in M(3,\br)$ is minimal for the G action above $\Leftrightarrow Z Z^{t} = Z^{t} Z$.  In particular the skew symmetric matrix v above is minimal, and $\lambda^{-}(v) > 0$ by (7.6).
	
	 We prove (ii).  If $A = diag(\lambda_{1}, \lambda_{2}, \lambda_{3})$, then $A(E_{ij}) = A E_{ij} - E_{ij} A = (\lambda_{i} - \lambda_{j}) E_{ij}$ for all i $\neq$ j , where $E_{ij}$ is the matrix with 1 in position ij and zeros elsewhere.  Observe that the only nonzero components of v lie in the 23 and 32 positions.   Hence by inspection and the definition of $\lambda_{B_{N}}(v)$  we obtain
	
	(a)  $\lambda_{B_{N}}(v) = (\lambda_{2} - \lambda_{3}) = 1$.

    The elements k of $SO(3,\br)$ preserve the lengths of vectors in $M(3,\br)$, and hence we have

    (b)  $|g_{N}(s)(v)| = |exp(sB_{N})(v)|$ for all s
	
	 If $B_{N}'  = diag(2, N+2, 1)= k~B_{N}~k^{-1}$, then we have
	
	 (c) $\lambda_{B_{N}'}(v) = (\lambda_{2} - \lambda_{3}) = N+1$

	(d) $|g_{N}(s)^{t}(v)| = |exp(sB_{N})~k^{-1}(v)| = |k^{-1}\{k~exp(sB_{N})~k^{-1}) \} (v) = |exp(sB_{N}')(v)|$ for all s.
	
	Finally $\frac{log |g_{N}(s)^{t}(v)|}{log |g_{N}(s)(v)|} = \frac{log |exp(sB_{N}')(v)|}{s} \frac{s}{log |exp(sB_{N}(v)|} \rightarrow \frac{\lambda_{B_{N}'}(v)}{\lambda_{B_{N}}(v)} = N+1$ as s $\rightarrow \infty$ by (a), (b), (c), (d) and (7.1).  This completes the proof of (ii).
\newline
	
	We prove (iii). We shall need a preliminary result.

\begin{lemma}  Let v be a nonzero minimal vector in $\br^{n}$, and let X $\in \fP$ be an element such that $X(v) \neq 0$.  Then

	a)  $d_{R}(exp(sX), G_{v}) \rightarrow \infty$ as s $\rightarrow \infty$
	
	b)  $|\exp(sX)(v)| \rightarrow \infty$ as s $\rightarrow \infty$.
\end{lemma}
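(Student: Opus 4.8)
The plan is to prove b) first by a direct eigenspace computation, and then to deduce a) from b) by a completeness/compactness argument. For b), note that since $v$ is a nonzero minimal vector, the moment map vanishes at $v$ by 1) of (6.2), so $\langle X(v),v\rangle=\langle m(v),X\rangle=0$. Decompose $v=\sum_{i=1}^{N}v_i$ with $X(v_i)=\lambda_i v_i$ and the $\lambda_i$ distinct; the identity above then reads $\sum_{i=1}^{N}\lambda_i|v_i|^2=0$. Because $X(v)=\sum\lambda_i v_i\neq 0$, there is an index with $v_i\neq 0$ and $\lambda_i\neq 0$, and since $\sum\lambda_i|v_i|^2=0$ it cannot happen that every such $\lambda_i$ is negative; hence $\lambda_i>0$ for some $i$ with $v_i\neq 0$. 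Consequently $|\exp(sX)(v)|^2=\sum_{k=1}^{N}\exp(2s\lambda_k)|v_k|^2\ge \exp(2s\lambda_i)|v_i|^2\to\infty$ as $s\to\infty$. (Equivalently, this shows $\lambda_X(v)>0$ and one may instead invoke (7.1).)

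For a), I would argue by contradiction. If the conclusion fails, there are a constant $B>0$ and a sequence $s_r\to\infty$ with $d_R(\exp(s_rX),G_v)\le B$ for all $r$. Choose $h_r\in G_v$ with $d_R(\exp(s_rX),h_r)\le B$ and set $\zeta_r=\exp(s_rX)\,h_r^{-1}$. By right invariance of $d_R$ we get $d_R(\zeta_r,I)=d_R(\exp(s_rX),h_r)\le B$. Since $(G,d_R)$ is complete by (3.1), the closed ball $\{g\in G:d_R(g,I)\le B\}$ is compact by the Hopf--Rinow theorem, so after passing to a subsequence $\zeta_r\to\zeta\in G$. As $h_r\in G_v$ we have $\exp(s_rX)(v)=(\zeta_rh_r)(v)=\zeta_r(v)\to\zeta(v)$, so the sequence $\{|\exp(s_rX)(v)|\}$ is bounded. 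This contradicts b), which forces $|\exp(s_rX)(v)|\to\infty$; hence a) holds.

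The proof is essentially bookkeeping, and I do not expect a genuine obstacle. The two ingredients that need care are the identity $\langle X(v),v\rangle=0$ for a minimal $v$, which is exactly 1) of (6.2) (or the convexity computation $f_X'(0)=0$, $f_X''(0)=4|X(v)|^2>0$ from the proof of (6.1)), and the compactness of closed $d_R$-balls, which combines completeness (3.1) with Hopf--Rinow. The conceptual content is just that minimality pins the velocity $f_X'(0)$ to zero while $X(v)\neq 0$ produces strictly positive exponential growth of $|\exp(sX)(v)|$ along some eigen-direction of $X$; part a) is then the standard reduction of ``distance to a closed submanifold tends to $\infty$'' to ``the curve $s\mapsto\exp(sX)(v)$ leaves every bounded set.''
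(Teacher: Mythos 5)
Your proof is correct, and the overall strategy is the same as the paper's: for part a) assume boundedness along a sequence, translate by elements of $G_{v}$ to land in a closed $d_{R}$-ball, use completeness and Hopf--Rinow to extract a compact ball, and conclude that $|\exp(s_{r}X)(v)|$ stays bounded, contradicting the growth of the orbit. The only place you diverge is in how you establish that growth (part b)). The paper argues via convexity: it sets $f_{X}(s)=|\exp(sX)(v)|^{2}$, observes $f_{X}'(0)=2\langle X(v),v\rangle=0$ by minimality, $f_{X}''(s)=4|X\exp(sX)(v)|^{2}\geq 0$, and $f_{X}''(0)=4|X(v)|^{2}>0$, so convexity forces $f_{X}(s)\to\infty$ as $s\to\infty$. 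You instead decompose $v$ into eigenvectors of $X$, use the moment-map identity $\sum\lambda_{i}|v_{i}|^{2}=0$ together with $X(v)\neq 0$ to produce a positive eigenvalue with nonzero component, and get exponential growth in that eigendirection -- in effect reproving, for arbitrary $X\in\fP$ with $X(v)\neq 0$, the positivity argument the paper gives in (7.6) for unit $X\in\widetilde{\fP_{v}}$. Both arguments rest on the same two facts ($\langle X(v),v\rangle=0$ and $X(v)\neq 0$); the convexity version is slightly softer and avoids spectral bookkeeping, while the eigenspace version gives the growth rate $\lambda_{X}(v)>0$ explicitly, which is the quantity the rest of the paper cares about anyway.
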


\begin{proof}  Proof of a).  We suppose that the assertion of a) is false for some nonzero minimal vector v and some X $\in \fP$ with X(v) $\neq 0$.  Then there exist sequences $g_{k} \subset G$, $\zeta_{k} \subset G_{v}$ and $\{s_{k} \} \subset \br$  and a positive number A such that $s_{k} \rightarrow \infty$ as k $\rightarrow \infty$ and $d_{R}(exp(s_{k} X), \zeta_{k}) \leq A$ for all k.  If $\varphi_{k} = exp(s_{k} X) \zeta_{k}^{-1}$, then $d_{R}(\varphi_{k}, Id) = d_{R}(exp(s_{k} X), \zeta_{k}) \leq A$ for all k.  Let $C = \{\varphi \in G : d_{R}(\varphi, Id) \leq A \}$.  Then C is compact by the completeness of $(G, d_{R})$, and there exists a positive number c such that $|\varphi(v)| \leq c$ for all $\varphi \in C$.  Since $\{\varphi_{k} \} \subset C$ we obtain $|exp(s_{k} X) (v)| = |\varphi_{k} \zeta_{k} (v)| = |\varphi_{k} (v)| \leq c$ for all k.

	The function $f_{X}(s) = |exp(sX)(v)|^{2}$  satisfies  $f_{X}'(0) =  2 \langle X(v) , v \rangle = 0$  since v is minimal.  Moreover, $f_{X}''(s) = 4 |X exp(sX)(v)|^{2} \geq 0$, and in particular $f_{X}''(0) = 4 |X(v)| > 0$ since $X(v) \neq 0$ by hypothesis.  It follows from the convexity of $f_{X}$ that $f_{X}(s) \rightarrow \infty$ as s $\rightarrow \infty$, which contradicts the fact that $f_{X}(s_{k}) \leq c^{2}$ for all k.  This completes the proof of a).
	
	Proof of b).  This is contained in the last paragraph of the proof of a).
\end{proof}

  We now complete the proof of (iii).  Recall that $B_{N} = diag (N+2, 2,1)$ and $B_{N}' = diag (2, N+2 , 1).$ Hence $B_{N}(v) = B_{N}(E_{23}) - B_{N}(E_{32}) = (\lambda_{2} - \lambda_{3}) E_{23} - (\lambda_{3} - \lambda_{2})(E_{32}) = E_{23} + E_{32} \neq 0$.  Similarly, $B_{n}'(v) = (N+1) (E_{23} + E_{32}) \neq 0$.

  	Using 4) of (4.5), the triangle inequality and the lemma above we obtain $d_{R}(g_{N}(s), G_{v}) = d_{R}(k~exp(sB_{N}), G_{v}) = d_{R}(exp(sB_{N}), k^{-1} \cdot G_{v}) \geq d_{R}(exp(sB_{N}), G_{v}) - d_{R}(k^{-1}, Id) \rightarrow \infty$ as s $\rightarrow \infty$.  Similarly $d_{R}(g_{N}(s)^{t}, G_{v}) = d_{R}(exp(sB_{N})~k^{-1}, G_{v}) = d_{R}(k^{-1}~exp(sB_{n}'), G_{v}) = d_{R}(exp(sB_{N}'), k \cdot G_{v}) \geq d_{R}(exp(sB_{N}') , G_{v}) - d_{R}(k , Id) \rightarrow \infty$ as s $\rightarrow \infty$.  This completes the proof of (iii).

\section{Appendix II}
				
	We give here the proofs of Propositions 5.1 and 6.5.
	
$\mathit{Proof~of~Proposition~5.1}$

	Assertion 1) of Proposition 5.1 is obvious and the remaining assertions are an immediate consequence of the next two results.
	
 $\mathbf{Lemma~5.1A}$  $\fZ(\fG)$ and $\fG_{0}$ are self adjoint ideals, and $\fG_{0}$ has trivial center.

  $\mathbf{Lemma~5.1B}$  Let $\fG$ be a self adjoint subalgebra of $M(n,\br)$ that has trivial center.  Then $\fG$ is semisimple.
 \newline

	We prove Lemma 5.1A.  Let $X \in \fG$ and $Z \in \fZ(\fG)$ be given.  Then $[Z^{t},X] = [X^{t},Z]^{t} = 0$ since $X^{t} \in \fG$.  Hence $\fZ(\fG)$ is self adjoint.  Let $Z \in \fZ(\fG)$ and $X \in \fG_{0}$ be given.  Then $\langle Z , X^{t} \rangle  =  \langle Z^{t} , X \rangle = 0$ since $Z^{t} \in \fZ(\fG)$.  Hence $\fG_{0}$ is self adjoint.
	
	Clearly $\fZ(\fG)$ is an ideal.  Let $X \in \fG,Y \in \fG_{0}$ and $Z \in \fZ(\fG)$ be given.  Then using 3) of (4.2) we obtain $\langle [X,Y] , Z \rangle = \langle ad~X(Y) , Z \rangle = \langle Y , (ad~X)^{*}(Z) \rangle = \langle Y , (ad~X^{t})(Z) \rangle = 0$ since $X^{t} \in \fG$.  Hence $\fG_{0}$ is an ideal of $\fG$.  It follows from 1) of (5.1) that $\fG_{0}$ has trivial center, which completes the proof of Lemma 5.1A.
	
	We prove Lemma 5.1B.  Since $\fG$ is self adjoint we may write $\fG = \fK \oplus \fP$, where $\fK = \{X \in \fG : X^{t} = - X \}$ and $\fP = \{X \in \fG : X^{t} = X \}$.    Let B denote the Killing form of $\fG$.  It suffices to prove a)  $B(\fK , \fP) = \{0 \}$  b)  B is negative definite on $\fK$ and c)  B is positive definite on $\fP$.
	
	a)  Let $X \in \fK$ and $Y \in \fP$ be given.  Then $(ad~ X \circ ad~ Y)(\fP) \subset \fK$ and $(ad~ X \circ ad~ Y)(\fK) \subset \fP$ since $[\fK , \fK] \subset \fK, [\fK , \fP] \subset \fP$ and $[\fP , \fP] \subset \fK$.  Compute the matrix of ad X $\circ$ ad Y relative to a basis of $\fG$ that is a union of bases of $\fK$ and $\fP$.  The diagonal elements of the matrix are all zero, and it follows that B(X,Y) $=$ trace ad X $\circ$ ad Y $= 0$.
	
	b)  If $X \in \fK$, then by 3) of (4.2) we obtain B(X,X) $=$ trace ad X $\circ$ ad X $ = -$ trace $ad X \circ (ad X)^{*} \leq 0$ with equality $\Leftrightarrow$ ad X $\equiv 0 \Leftrightarrow X = 0$ since $\fG$ has trivial center.  This proves b) and the proof of c) is similar.  This completes the proof of Lemma 5.1B.
\newline

$\mathit{Proof~of~Proposition~6.5}$

	We prove the result in two cycles : $1) \Rightarrow 4)~\Rightarrow 3)~\Rightarrow 2)~\Rightarrow 1)$ and $4) \Rightarrow 5) \Rightarrow 4)$.
	
	$1) \Rightarrow 4)$  Since 1) holds there exists a positive constant c such that $|g(v)| \leq c$ for all g $\in$ G.  Let X be any nonzero element of $\fP$ and define $f_{X}(t) = |exp(tX)(v)|^{2}$.  Then $f_{X}''(t) = 4 |Xexp(tX)(v)|^{2} \geq 0$ for all t $\in \br$.  By hypothesis $f_{X}(t) \leq c$ for all t $\in \br$, and hence $f_{X}(t) \equiv$ constant by the convexity of $f_{X}$.  It follows that $0 = f_{X}''(0) = 4 |X(v)|^{2}$.

	$4) \Rightarrow 3)$  Let g $\in$ G be given.  By the KP decomposition there exist elements k $\in$ K and X $\in \fP$ such that $g = k exp(X)$.  The hypothesis 4) implies that $exp(X) \in G_{v}$ for all X $\in \fP$, and it follows immediately that $G = K \cdot G_{v}$.
	
	The assertion $3) \Rightarrow 2)$ is obvious.  We prove that $2) \Rightarrow 1)$.  Let A be a positive constant such that $d_{R}(g, K \cdot G_{v}) \leq A$ for all g $\in$ G.  It suffices to show that $|g(v)| \leq exp(A) |v|$ for all g $\in$ G.
	
	Let g $\in$ G be given.  By (5.6) there exist elements k $\in$ K, h $\in G_{v}$ and X $\in \fP_{v}^{\perp}$ such that $g = k exp(X) h$ and $|X| = d_{R}(g, K \cdot G_{v}) \leq A$.  Hence $|\lambda| \leq |X| \leq A$ if $\lambda$ is any eigenvalue of X.  Write $v = \sum_{i=1}^{N} v_{i}$, where $X(v_{i}) = \lambda_{i} v_{i}$ for some eigenvalues $\lambda_{i}, 1 \leq i \leq N$.  Then $exp(X)(v) = \sum_{i=1}^{N} exp(\lambda_{i}) v_{i}$ and $|g(v)|^{2} = |exp(X)(v)|^{2} = \sum_{i=1}^{N} exp(2 \lambda_{i}) |v_{i}|^{2} \leq \sum_{i=1}^{N} exp(2A) |v_{i}|^{2} = exp(2A) |v|^{2}$.  It follows that $|g(v)| \leq exp(A) |v|$ for all g $\in$ G.  Hence 2) $\Rightarrow$ 1).
\newline

    We next show that $4) \Rightarrow 5) \Rightarrow 4)$.  If 4) holds then $\fP \subset \fG_{v}$ and $\fG = \fK \oplus \fP \subset \fK + \fG_{v} \subset \fG$, and equality must hold everywhere.  Hence 4) $\Rightarrow 5)$.

    Now suppose that 5) holds.  It suffices to show that v is a minimal vector.  Then $\fG_{v} = \fK_{v} \oplus \fP_{v}$ by 2) of (6.1), and it follows that $\fG = \fK + \fG_{v} = \fK \oplus \fP_{v} \subset \fK \oplus \fP = \fG$.  Equality must hold everywhere, and this implies that $\fP = \fP_{v}$, which is 4).

    We show that v is minimal if 5) holds.  Let X $\in \fG$ be given, and write $X = K + H$, where $K \in \fK$ and $H \in \fG_{v}$.  Then $\langle m(v) , X \rangle = \langle X(v) , v \rangle =\langle K(v) , v \rangle = 0$ since K is skew symmetric and $H(v) = 0$.  Hence $m(v) = 0$ since $X \in \fG$ was arbitrary, and v is minimal by 1) of (6.2).
\newline
	
	To complete the proof of (6.5) it remains only to prove that if G has no nontrivial compact, normal subgroups, then G fixes v if any of the conditions above is satisfied.  Let these conditions be satisfied.  Then X(v) $= 0$ for all X $\in \fP$, and it follows immediately that X(v) $= 0$ for all X $\in [\fP, \fP] \subset \fK$.  It suffices to prove that $[\fP , \fP] = \fK$, for then X(v) $= 0$ for all X $\in \fG = \fK \oplus \fP$.  This is an immediate consequence of the next result.
	
$\mathbf{Lemma}$  Let $\fK_{1} = \{X \in \fK : \langle X , [Y,Z] \rangle = 0~\rm{for~all}~Y,Z \in \fP \}$.  Let $K_{1}$ be the connected Lie subgroup of K whose Lie algebra is $\fK_{1}$.  Then $\overline{K_{1}}$, the closure of $K_{1}$ in G, is a compact normal subgroup of G.

\begin{proof}  It suffices to prove that $\fK_{1}$ is an ideal of $\fG = \fK \oplus \fP$, or equivalently, that $ad~\xi(\fK_{1}) \subset \fK_{1}$ for $\xi \in \fK~\cup~\fP$.  Let $\xi \in \fK, X \in \fK_{1}$ and $Y,Z \in \fP$.  Then $ad~\xi(X) \in [\fK , \fK] \subset \fK$.  By 3) of (4.2) we obtain $\langle ad~\xi(X) , [Y,Z] \rangle = - \langle X , ad~\xi([Y,Z]) \rangle = - \langle X , [ad~\xi(Y),Z] \rangle - \langle X , [Y, ad~\xi(Z)] \rangle = 0$ since $X \in \fK_{1}$ and $ad~\xi(\fP) \subset [\fK, \fP] \subset \fP$.  Hence $ad~\xi(\fK_{1}) \subset \fK_{1}$ if $\xi \in \fK$.

	Let $\xi , Y \in \fP$ and $X \in \fK_{1}$.  Then $\langle ad~\xi(X) , Y \rangle = \langle X , ad~\xi(Y) \rangle = 0$ by 3) of (4.2) and the definition of $\fK_{1}$.  This proves that $ad~\xi(\fK_{1}) = \{0 \}$ if $\xi \in \fP$ since $ad~\xi(X) \in [\fP , \fK] \subseteq \fP$ and $Y \in \fP$ was arbitrary.
\end{proof}

$\mathit{References}$

[B]  A. Borel, $\mathit{Linear~algebraic~groups}$, Springer, New York, 1991.
\newline

[B-HC]  A. Borel and Harish-Chandra, " Arithmetic subgroups of algebraic groups ", Annals of Math (2) 75 (1962), 485-535.
\newline

[C]  C. Chevalley , $\mathit{Theorie~des~Groupes~de~Lie~:~Tome~II,~Groupes~Algebriques}$, Universit$\acute{e}$ de Nancago, Hermann, Paris, 1951.
\newline

[CE]  J. Cheeger and D. Ebin, $\mathit{Comparison~Theorems~in~Riemannian~Geometry}$, North Holland, Amsterdam, 1975
\newline

[E]  P. Eberlein, " Riemannian 2-step Nilmanifolds with Prescribed Ricci tensor ", Contemp. Math., vol. 469, (2008), 167-195.
\newline

[EJ]  P. Eberlein and M. Jablonski, " Closed Orbits of Semisimple Group Actions and the Real Hilbert-Mumford Function ", Contemp. Math., vol.491, (2009), 283-322.
\newline

[KN]  G. Kempf and L. Ness, " The length of vectors in representation spaces ", Lecture Notes in Mathematics 273, 233-243, Springer, New York, 1979.
\newline

[M]  A. Marian, " On the real moment map ", Math. Res. Lett. 8, (2001), 779-788.
\newline

[M1]  G.D. Mostow, " The extensibility of local Lie groups of transformations and groups on surfaces ", Annals of Math., 52 (1950), 606-636.
\newline

[M2] -------, " Self adjoint groups ", Annals of Math., 62 (1955), 44-55.
\newline

[RS]  R. Richardson and P. Slodowy, " Minimum vectors for real reductive algebraic groups ", Jour. London Math. Soc. 42 (1990), 409-429.
\end {document}